\newtheorem{theorem}{Theorem}[section]
\newtheorem{lemma}[theorem]{Lemma}
\newtheorem{corollary}[theorem]{Corollary}
\newtheorem{proposition}[theorem]{Proposition}
\theoremstyle{definition}
\newtheorem{definition}[theorem]{Definition}
\newtheorem{remark}[theorem]{Remark}
\renewcommand{\phi}{\varphi}
\renewcommand{\tilde}{\widetilde}
\newcommand{\OOmega}{\mathbf{\Omega}}
\newcommand{\Fl}{\mathcal{F}}
\newcommand{\Hess}{\mathcal{H}}
\newcommand{\HHess}{\mathscr{H}}
\newcommand{\Sch}{\mathfrak{S}}
\newcommand{\Z}{\mathbb{Z}}
\newcommand{\C}{\mathbb{C}}
\DeclareMathOperator{\rk}{rk}
\DeclareMathOperator{\Hom}{Hom}
\newcommand{\isom}{\cong}
\renewcommand{\setminus}{\smallsetminus}
\newcommand{\define}{\textbf}
\begin{document}

\title{Schubert polynomials and classes of Hessenberg varieties}
\author{Dave Anderson}
\address{Department of Mathematics\\University of Michigan\\Ann Arbor, MI 48109}
\email{dandersn@umich.edu}
\author{Julianna Tymoczko}
\address{Department of Mathematics, University of Iowa, Iowa City, IA 52242}
\email{tymoczko@math.iowa.edu}
\thanks{DA was partially supported by an RTG fellowship, NSF Grant 0502170.  JT was partially supported by an NSF postdoctoral fellowship, NSF Grant 0402874.}

\date{October 2, 2008}

\begin{abstract}
Regular semisimple Hessenberg varieties are a family of subvarieties of the flag variety that arise in number theory, numerical analysis, representation theory, algebraic geometry, and combinatorics.  We give a ``Giambelli formula'' expressing the classes of regular semisimple Hessenberg varieties in terms of Chern classes.  In fact, we show that the cohomology class of each regular semisimple Hessenberg variety is the specialization of a certain double Schubert polynomial, giving a natural geometric interpretation to such specializations.  We also decompose such classes in terms of the Schubert basis for the cohomology ring of the flag variety.  The coefficients obtained are nonnegative, and we give closed combinatorial formulas for the coefficients in many cases.  We introduce a closely related family of schemes called regular nilpotent Hessenberg schemes, and use our results to determine when such schemes are reduced.
\end{abstract}

\maketitle

\section{Introduction}

A fundamental open problem in combinatorial representation theory and algebraic geometry is to give closed formulas for calculations in the cohomology ring of the full flag variety $\Fl = SL_n/B$ in terms of the basis of Schubert classes.  This paper answers a part of this question by expressing the classes of certain subvarieties of the flag variety called regular semisimple Hessenberg varieties in terms of the Schubert classes.  We give closed combinatorial formulas in many cases for the coefficients of the Schubert classes in this expansion and in particular show that the coefficients are nonnegative.  We prove that the cohomology class of each regular semisimple Hessenberg variety is the specialization of a certain double Schubert polynomial, thus giving a natural geometric interpretation to such specializations.  As an application, we determine when a family of Hessenberg schemes called regular {\em nilpotent} Hessenberg schemes are reduced.  The key observation is that Hessenberg varieties can be realized as certain degeneracy loci in the flag variety.

The cohomology ring of the flag variety can be expressed as a quotient $H^*(\Fl) = \mathbb{Z}[x_1,x_2,\ldots,x_n]/I$, where $I$ is the ideal generated by elementary symmetric polynomials $e_1(x),\ldots,e_n(x)$; the equivariant cohomology can similarly be expressed as $H^*(\Fl) = \mathbb{Z}[x_1,\ldots,x_n;y_1,\ldots,y_n]/J$ where $J$ is generated by differences $e_i(x)-e_i(y)$.  {\it Double Schubert polynomials} can be interpreted as special polynomial representatives for the classes of Schubert varieties in $H^*_T(\Fl)$.  In this paper, we study the double Schubert polynomials corresponding to {\em dominant} permutations (see Section \ref{sec:perms}).  Each dominant double Schubert polynomial is a product of binomials, as shown in \cite[(6.14)]{mac}.  Proposition \ref{prop:not monomial} proves the converse: dominant double Schubert polynomials are the only Schubert polynomials that are a product of binomials.

In combinatorics, specializing polynomials at particular values of the variables is a classical way to find new identities.  Certain specializations of Schubert polynomials have been studied before; see e.g. \cite{br}, \cite{bjs}, \cite{llt}, \cite{p}, and \cite{KT}.  We specialize double Schubert polynomials at the permutation $w_0$ defined by $w_0(i)=n-i+1$ for all $i$, which means imposing the conditions $y_i=x_{w_0(i)}$ for each $i$.  The formulas obtained are essentially the same as those for localized Schubert classes in equivariant cohomology, but the meaning is quite different.  In equivariant cohomology, one replaces $x$'s with $y$'s to obtain the torus weight at a fixed point; here we do the opposite, replacing $y$'s with $x$'s to obtain a formula in ordinary cohomology.  Proposition \ref{prop:hessclassform} gives geometric meaning to this operation by showing that the specialization of a dominant double Schubert polynomial at the longest permutation $w_0$ is the class of a variety called a {\em regular semisimple Hessenberg variety} inside $H^*(\Fl)$.  As a corollary of the geometric interpretation, we see that when such a specialized double Schubert polynomial is written in terms of the basis of (single) Schubert polynomials, the coefficients corresponding to permutations $u\in S_n$ are nonnegative (see Proposition \ref{prop:positive}).  This joins the ranks of other results in which positivity is proved through geometric arguments rather than combinatorial ones, e.g. \cite{G}, \cite{BFR}, \cite{KMS}, or \cite{Paddendum}.  It also motivates a combinatorial problem: {\it Express the class of a double Schubert polynomial specialized at a permutation in terms of single Schubert polynomials}.

{\em Hessenberg varieties} are subvarieties of the flag variety that were defined in \cite{mps}.  They depend on two parameters: a function $h:\{1,\ldots,n\} \to \{1,\ldots,n\}$ (called a \emph{Hessenberg function}), and a linear map $X:\C^n\to\C^n$.  (We will give the precise definitions in Section \ref{sec:hessenberg}.)  Examples appear in representation theory \cite{bm}, in the Langlands program \cite{gkm}, and in numerical analysis \cite{mps}, among other applications.  In Section \ref{sec:hessenberg}, we give a new construction of Hessenberg varieties as certain degeneracy loci, and define the more general \emph{Hessenberg schemes}.

The discrete parameter $h$ can also be encoded as a partition or a (dominant) permutation (see Section \ref{sec:hessenberg}).  We will usually work with a genericity assumption on the continuous parameter $X$: namely, that it is a regular semisimple endomorphism.  In this case, the corresponding Hessenberg variety is called {\em regular semisimple} and is a smooth, pure-dimensional projective algebraic variety \cite{mps}.  Regular semisimple Hessenberg varieties themselves have been used to produce new combinatorial identities \cite{fu}, \cite{i} and permutation representations \cite{s}. 

Our main results explicitly compute the classes of regular semisimple Hessenberg varieties inside $H^*(\Fl(n))$ in terms of the basis of Schubert classes, giving a partial solution to the specialization problem mentioned above.  Let $\Omega_v$ denote the Schubert variety corresponding to a permutation $v$.  We give a complete and explicit formula for the coefficient of $[\Omega_v]$ when the Hessenberg class is in {\em the stable range}, which occurs when the codimension of the Hessenberg variety is sufficiently small.

\medskip

\noindent {\bf Theorem \ref{thm:stableclasses}.} {\em Let $w$ be the dominant permutation defining a regular semisimple Hessenberg variety $\Hess_w$, and suppose $w \in S_k \subseteq S_n$ for $2k \leq n$.  Then
\begin{eqnarray*}
{[\Hess_{w}] = \Sch_w(x_1,\ldots,x_n;x_n,\ldots,x_1) = \sum_{v^{-1}u=w} {[\Omega_{u\,w_0\,v\,w_0}]}}
\end{eqnarray*}
in $H^*(\Fl(n))$, where the sum is over permutations $u,v$ such that $v^{-1}u=w$ and $\ell(u)+\ell(v)=\ell(w)$.}

\medskip

When the Hessenberg variety has higher codimension, we focus on the ones associated to the permutation $w_k = \prod_{i=1}^k s_{n-i}$, for simple transpositions $s_i = (i,i+1)$.  (This is the cycle $[1,2,\ldots,n-k-1,n,n-k,n-k+1,\ldots,n-1]$.)  These permutations appear in Pieri formulas for classical Schubert calculus, namely closed formulas for a product of the Schubert polynomial for $w_k$ with an arbitrary Schubert polynomial.  Pieri formulas are key to giving an explicit presentation of the ring $H^*(\Fl)$ \cite{FLgrothpieri}, \cite{Pquantpieri}, \cite{Reqvtpieri}, \cite{Soflagspieri}.

Our formula is given in terms of certain paths in the Bruhat graph of the permutation group.  A {\em restricted path} is a sequence of permutations $u_1, u_2, \ldots, u_p$ such that for each $i=1, \ldots, p-1$, the length of $u_{i+1}$ is $\ell(u_{i})-1$ and $u_{i+1}$ can be written as the product $u_i s_{1j}$ for some transposition $s_{1j}=(1j)$.

\medskip

\noindent {\bf Theorem \ref{thm:rows}.} {\em If $u$ is a permutation of length $k$ and $\Hess$ is the regular semisimple Hessenberg variety associated to $w_k$, then the coefficient of the Schubert class corresponding to $u$ in the class of $\Hess$ in $H^*(\Fl(n))$ is the number of restricted paths from $u$ to a permutation $s_{n-p}s_{n-p-1} \cdots s_{n-k}$, where $p$ is the number of terms in the path.}

\medskip

If $X$ is a regular nilpotent endomorphism of $\C^n$, the corresponding Hessenberg varieties are called regular nilpotent; these have been used to compute the quantum cohomology of the flag variety \cite{Ko}, \cite{R} and they are related to hyperplane arrangements \cite{ST}.  We also consider regular nilpotent Hessenberg \emph{schemes}, which may have additional nonreduced structure.  As discussed in Section \ref{limits}, regular nilpotent Hessenberg schemes are flat limits of regular semisimple Hessenberg varieties.  It follows that the decomposition into Schubert classes of the class of a regular nilpotent Hessenberg scheme is the same as the decomposition into Schubert classes for regular semisimple Hessenberg varieties.  We use this to determine when the regular nilpotent Hessenberg scheme associated to $w_k$ is reduced.  

\medskip

\noindent {\bf Theorem \ref{thm:nilp scheme}.}  {\em The regular nilpotent Hessenberg scheme associated to $w_k$ is reduced if $k < n-1$.  If $k=n-1$ then the regular nilpotent Hessenberg variety $\Hess(N, w_k)$ and the regular semisimple Hessenberg variety $\Hess(S, w_k)$ satisfy 
\[ [\Hess(S,w_k)] = n [\Hess(N,w_k)].\]}

\medskip

We fix notation, conventions, and basic definitions in Section \ref{sec:basic}.  Section \ref{sec:hessenberg} contains the definitions of Hessenberg varieties and schemes, as well as the basic propositions giving Chern class formulas for their cohomology classes.  In Section \ref{sec:perms}, we discuss dominant permutations and their Schubert polynomials.  Sections \ref{sec:computing} through \ref{limits} are devoted to the proofs of the results discussed above.  Finally, in a short appendix we prove a connectivity result for Hessenberg varieties; this can be read independently.

\medskip
\noindent
{\it Acknowledgements.}  We thank Alex Yong for an inspiring conversation, Bill Fulton for helpful comments on the manuscript, and Ezra Miller for simplifying our proof of Proposition \ref{prop:not monomial}.

\section{Basic setup} \label{sec:basic}

\subsection{Flag varieties}

Let $\Fl = \Fl(n)$ be the variety of complete flags in an $n$-dimensional vector space $E$.  On $\Fl$, there is the universal sequence
\begin{eqnarray*}
S_1 \subset S_2 \subset \cdots \subset S_n = E_{\Fl} = Q_n \to Q_{n-1} \to \cdots \to Q_1,
\end{eqnarray*}
where $S_{\bullet}$ is the tautological flag of subbundles and $Q_{\bullet}$ is the universal flag of quotients.  (Thus $Q_i = E/S_{n-i}$.)  Let $x_i = c_1(\ker(Q_i \to Q_{i-1}))$.  Then the cohomology ring of $\Fl$ can be presented as
\begin{eqnarray*}
H^*(\Fl) = \Z{[x_1,\ldots,x_n]}/I,
\end{eqnarray*}
where $I$ is the ideal generated by nonconstant symmetric polynomials in $x_1,\ldots,x_n$.

The \define{Schubert varieties} in $\Fl$ give a geometric basis for $H^*(\Fl)$.  To define them, we set some notation.  For any permutation $w\in S_n$, define the $(q,p)$-rank of $w$ to be
\begin{eqnarray*}
r_w(q,p) = \#\{ i \leq q \,|\, w(i) \leq p \}.
\end{eqnarray*}

If we fix a flag $E_{\bullet}$, the corresponding Schubert varieties are
\begin{equation} \label{eqn:schdefn}
\Omega_w(E_{\bullet}) = \{ F_{\bullet} \in \Fl \,|\, \dim(E_p \cap F_q) \geq r_{w\,w_0}(q,p) \text{ for } 1\leq p,q\leq n \}.
\end{equation}
(Here $w_0$ is the longest permutation in $S_n$, sending $i$ to $n+1-i$.  Our conventions agree with \cite{flags} but differ from \cite{yt}.)  The \emph{Schubert cell} $\Omega^o_w(E_{\bullet})$ is the set of flags in $\Omega_w$ with equality in \eqref{eqn:schdefn}.   We have $\Omega_w(E_{\bullet}) = \overline{\Omega^o_w(E_{\bullet})}$.

\subsection{Permutations and Bruhat order}

We use $s_i$ to denote the simple transposition interchanging $i$ and $i+1$, and we write $e$ for the identity.  We also use \emph{one-line notation} for permutations, writing $w = [w(1),w(2),\ldots,w(n)]$, and sometimes omitting the brackets.  Thus $s_1\,s_2 = [2,3,1] = 2\;3\;1$ in $S_3$.

There are several equivalent descriptions of the \define{Bruhat order} on $S_n$; we refer to \cite[\S1]{mac} or \cite[\S2]{bb} for the following facts.  Say $w$ covers $v$ if there is a reduced expression 
\[ w = s_{i_1} s_{i_2} \cdots s_{i_k},\]
and some $j$ such that
\[ v = s_{i_1} \cdots \widehat{s_{i_j}}\cdots s_{i_k} \]
is a reduced expression for $v$.  Then the Bruhat order is the partial order generated by this covering relation.

We also use two characterizations of the \define{length} of a permutation.  If $w = s_{i_1} \cdots s_{i_\ell}$ is a reduced expression, then the length of $w$ is $\ell(w) = \ell$.  The length can also be described as the number of {\em inversions} of $w$, namely $\ell(w) = \# \{ i< j \,|\, w(i) > w(j) \}$.  The Schubert variety $\Omega_w$ has codimension $\ell(w)$ in $\Fl$.  (This can be taken as a geometric definition of the length of $w$.)

\subsection{Schubert polynomials}

Define \define{divided difference operators} $\partial_i$ on \linebreak $\Z{[x_1,\ldots,x_n; y_1, \ldots, y_n]}$ by the formula
\begin{eqnarray*}
\partial_i P(x;y) = \frac{ P(x;y) - P(\ldots,x_{i+1},x_i,\ldots;y) }{x_i - x_{i+1}}.
\end{eqnarray*}
If $w = s_{i_1}\, s_{i_2} \cdots s_{i_\ell}$ is a reduced expression, set $\partial_w = \partial_{i_1} \circ \partial_{i_2} \circ \cdots \circ \partial_{i_\ell}$.  Since the divided difference operators satisfy the relations $\partial_i \partial_{i+1} \partial_i = \partial_{i+1} \partial_i \partial_{i+1}$ and $\partial_i^2 = 0$, the operator $\partial_w$ is independent of the choice of word for $w$.

The \define{double Schubert polynomials} of Lascoux and Sch\"utzenberger are defined as follows.  For $w_0 = {[n,n-1,\ldots,1]} \in S_n$ in one-line notation, set
\begin{eqnarray*}
\Sch_{w_0}(x;y) = \prod_{i+j\leq n} (x_i - y_j).
\end{eqnarray*}
Now for any $w\in S_n$, write $w = w_0\, u$ with $\ell(u) = \ell(w_0) - \ell(w)$, and set
\begin{eqnarray*}
\Sch_w(x;y) = \partial_u \Sch_{w_0}(x;y).
\end{eqnarray*}
Equivalently, $\Sch_w$ is defined inductively by
\begin{eqnarray*}
\partial_i \Sch_w(x;y) = \left\{ \begin{array}{cl} \Sch_{w\, s_i}(x;y) & \text{if } \ell(w) >\ell(w\, s_i) ; \\ 0 &\text{if } \ell(w) < \ell(w\, s_i). \end{array} \right.
\end{eqnarray*}

Setting the $y$ variables to $0$, one obtains the \define{single Schubert polynomials} $\Sch_w(x) = \Sch_w(x;0)$.  Interpreting the $x$'s as Chern classes of the universal bundles on $\Fl$, we have ${[\Omega_w]} = \Sch_w(x)$ in $H^*\Fl$.

An important fact about (single) Schubert polynomials is that they form a basis for all polynomials, as $w$ ranges over $S_\infty = \bigcup S_n$.  More precisely, $\{\Sch_w(x) \,|\, w(i) < w(i+1) \text{ for all } i \geq n\}$ is an additive basis for $\Z{[x_1,\ldots,x_n]}$ (see \cite[Proposition 10.6]{yt}).

\subsection{Degeneracy loci}\label{subsec:degeneracy}

In the most general situation we consider, $V$ is a rank $n$ vector bundle on a scheme $Z$, with two complete flags
\[ U_1 \subset U_2 \subset \cdots \subset U_n = V = W_n \to W_{n-1} \to \cdots \to W_1. \]
Given an endomorphism $X:V\to V$ and a permutation $w$, there is a \define{degeneracy locus}:
\begin{eqnarray*}
\OOmega_w(X) = \{x \in Z \,|\, \rk(U_p(x) \to V(x) \xrightarrow{X} V(x) \to W_q(x)) \leq r_w(q,p) \text{ for } 1 \leq p,q \leq n \}. 
\end{eqnarray*}
As a scheme, $\OOmega_w(X)$ is the inverse image of a universal degeneracy locus; see \cite{flags} for details.

The essential fact we need is a special case of \cite[Theorem 8.2]{flags}: If $Z$ is smooth, and if $\OOmega_w(X)$ has expected codimension $\ell(w)$, then $\OOmega_w(X)$ is Cohen--Macaulay, and the cohomology class of $\OOmega_w(X)$ in $H^*Z$ is represented by a \emph{double Schubert polynomial}:
\begin{eqnarray} \label{eqn:deglocform}
{[\OOmega_w(X)]} = \Sch_w(x_1,\ldots,x_n;y_1,\ldots,y_n) ,
\end{eqnarray}
where $x_i = c_1(\ker(W_i \to W_{i-1}))$ and $y_i = c_1(U_i/U_{i-1})$.  This formula holds for $\OOmega_w(X)$ considered as a scheme---in particular, if $\OOmega_w(X)$ has nonreduced components, there will be multiplicities in the expression for ${[\OOmega_w(X)]}$.

The paradigmatic case is that of Schubert varieties in $\Fl(n)$.  If $E_1 \subset \cdots \subset E_n = V$ is a fixed flag of trivial subbundles, and $Q_{\bullet}$ is the universal flag of quotient bundles on $\Fl$, then the Schubert variety $\Omega_w(E_{\bullet})$ is the degeneracy locus $\OOmega_w(id)$ for the sequence $E_1 \subset \cdots \subset E_n = V = Q_n \to \cdots \to Q_1$.

\section{Hessenberg schemes and Hessenberg coefficients} \label{sec:hessenberg}

As defined in \cite{mps} or \cite{t1}, the \define{Hessenberg varieties} in $\Fl(n)$ depend on a linear map $X:\C^n\to\C^n$ and a \define{Hessenberg function} $h$, and are given by
\begin{eqnarray}
\Hess(X,h) = \{ F_{\bullet} \in \Fl \,|\, X(F_i) \subseteq F_{h(i)} \text{ for all } i \}.
\end{eqnarray}
The function $h:{[n]} \to {[n]}$ is required to be nondecreasing and to satisfy $h(i) \geq i$ for all $i$.

More generally, we consider \define{Hessenberg schemes}, which are certain degeneracy loci in a flag bundle.  Indeed, the condition $X(F_i) \subseteq F_{h(i)}$ is the same as requiring $\rk(S_i(x) \to V \xrightarrow{X} V \to Q_{n-h(i)}(x)) = 0$, since $Q_{n-h(i)} = V/S_{h(i)}$.  There is a unique $w\in S_n$ whose rank matrix $r_w$ determines exactly these conditions; it is given by $r_w(n-h(i),i) = 0$ for each $i$ together with all implied conditions.  This data is also encoded in the partition $(n-h(1),n-h(2),\ldots,n-h(n))$, or equivalently, its conjugate\footnote{The \emph{conjugate} of a partition is the partition with rows and columns transposed; formally, $\mu'_i = \#\{ j\,|\, \mu_j\geq i\}$.} partition $\lambda$.  We write $w=w(h)=w(\lambda)$.  (Section \ref{sec:perms} has an explicit formula.)  For example, if $n=4$, and $h(1)=h(2)=3$, $h(3)=h(4)=4$, then $\lambda = (2)$ and $w(\lambda)$ is the permutation with rank matrix given by 
\begin{eqnarray*}
\begin{array}{|c|c|c|c|} \hline
0 & 0 & 1 & 1 \\ \hline
1 & 1 & 2 & 2 \\ \hline
1 & 2 & 3 & 3 \\ \hline
1 & 2 & 3 & 4 \\ \hline \end{array}\,.
\end{eqnarray*}
(The entry in the $(q,p)$ position is $r_w(q,p)$.)  Thus $w = {[3,1,2,4]}$.  

\begin{definition}
Let $V$ be a vector bundle of rank $n$ on a scheme $Z$, with a complete flag of subbundles $U_\bullet$; put $W_i = V/U_{n-i}$.  For an endomorphism $X:V\to V$ and a Hessenberg function $h$, the \define{Hessenberg scheme} is the degeneracy locus
\begin{eqnarray*}
\HHess(X,h) = \OOmega_{w(h)}(X) \subseteq Z.
\end{eqnarray*}
The \define{expected codimension} of $\HHess(X,h)$ is the length of the permutation $w(h)$; that is, the codimension of the Schubert variety $\Omega_{w(h)}$ in $\Fl$.  It is equal to $\sum_{i=1}^n (n-h(i))$.
\end{definition}

Thus when $Z=\Fl(n)$ and $U_\bullet = S_\bullet$ is the tautological flag, the Hessenberg variety is just the reduced structure on the Hessenberg scheme: $\Hess(X,h) = \HHess(X,h)_{\mathrm{red}}$.

\begin{proposition} \label{prop:hessclassform}
If $Z$ is smooth and $\HHess(X,h)$ has expected codimension,
\begin{eqnarray} \label{eqn:hessclassform}
[\HHess(X,h)] = \Sch_{w(h)}(x_1,\ldots,x_n;x_n,\ldots,x_1)
\end{eqnarray}
in $H^*Z$, where $x_i = c_1(\ker(W_i \to W_{i-1}))$.
\end{proposition}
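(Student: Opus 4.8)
The plan is to deduce the statement directly from the degeneracy-locus formula \eqref{eqn:deglocform}, followed by a short identification of Chern-class variables. Since $\HHess(X,h)$ is by definition the degeneracy locus $\OOmega_{w(h)}(X)$, and since the proposition assumes $Z$ is smooth and $\HHess(X,h)$ has expected codimension $\ell(w(h))$, the hypotheses of the cited special case of \cite[Theorem 8.2]{flags} hold verbatim. Applying \eqref{eqn:deglocform} gives
\[ [\HHess(X,h)] = \Sch_{w(h)}(x_1,\ldots,x_n;y_1,\ldots,y_n) \]
in $H^*Z$, where $x_i = c_1(\ker(W_i \to W_{i-1}))$ and $y_i = c_1(U_i/U_{i-1})$. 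Everything now reduces to expressing the $y$-variables in terms of the $x$-variables.

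For this I would unwind the relation $W_i = V/U_{n-i}$ between the two flags. The surjection $W_i \to W_{i-1}$ is the canonical map $V/U_{n-i} \to V/U_{n-i+1}$, whose kernel is the line bundle $U_{n-i+1}/U_{n-i}$; hence $x_i = c_1(U_{n-i+1}/U_{n-i})$. Substituting $i = n-j+1$ gives $x_{n-j+1} = c_1(U_j/U_{j-1}) = y_j$, so that $y_i = x_{n-i+1}$ for every $i$, that is, $(y_1,\ldots,y_n) = (x_n,\ldots,x_1)$. Putting this into the displayed formula yields
\[ [\HHess(X,h)] = \Sch_{w(h)}(x_1,\ldots,x_n;x_n,\ldots,x_1), \]
which is the assertion.

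I do not expect a genuine obstacle: the substantive input is already contained in \eqref{eqn:deglocform}, and what remains is purely bookkeeping. The only delicate point is the order-reversal produced by $W_i = V/U_{n-i}$, which sends the forward index $i$ of the subbundle flag $U_\bullet$ to the index $n-i+1$ of the quotient-flag variables, and is precisely what replaces $(x_1,\ldots,x_n)$ by $(x_n,\ldots,x_1)$ in the second slot. To guard against an index slip here I would check consistency with the conventions of Section \ref{sec:basic}: taking $Z = \Fl(n)$ with $U_\bullet = S_\bullet$ the tautological flag gives $W_i = E/S_{n-i} = Q_i$, so the $x_i$ above agree with the Chern classes $c_1(\ker(Q_i \to Q_{i-1}))$ used there, and the identity $y_i = c_1(S_i/S_{i-1}) = x_{n-i+1}$ follows from the same computation applied to the tautological sequence.
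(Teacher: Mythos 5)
Your proposal is correct and follows exactly the paper's own argument: apply the degeneracy locus formula \eqref{eqn:deglocform} under the smoothness and expected-codimension hypotheses, then use $\ker(W_i \to W_{i-1}) = U_{n+1-i}/U_{n-i}$ to conclude $y_i = x_{n+1-i}$. Your index bookkeeping (and the sanity check against the tautological flag on $\Fl(n)$) matches the paper's relation $x_i = y_{n+1-i}$, so there is nothing to add.
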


\begin{proof}
These hypotheses together with the degeneracy locus formula (\ref{eqn:deglocform}) say \linebreak ${[\HHess(X,h)]} = \Sch_{w(h)}(x;y)$.  Since $\ker(W_i \to W_{i-1}) = U_{n+1-i}/U_{n-i}$, the $x$ and $y$ variables are related by $x_i = y_{n+1-i}$.
\end{proof}

Two cases are of particular interest.  In this article we focus on the first:
\begin{corollary}
Suppose $\HHess(X,h) \subseteq \Fl(n)$ has expected codimension and is reduced.  Then
\renewcommand{\theenumi}{\alph{enumi}}
\begin{enumerate}
\item $[\Hess(X,h)] = \Sch_{w(h)}(x_1,\ldots,x_n;x_n,\ldots,x_1)$ in $H^*\Fl(n)$, where $x_i = c_1(\ker(Q_i \to Q_{i-1}))$; and

\medskip
\item $[\Hess(X,h)]^T = \Sch_{w(h)}(x_1,\ldots,x_n;x_n,\ldots,x_1)$ in $H_T^*\Fl(n)$, where $x_i = c^T_1(\ker(Q_i \to Q_{i-1}))$ and $T\subset SL_n$ is a torus commuting with $X$.
\end{enumerate}
\end{corollary}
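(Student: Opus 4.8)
The plan is to derive both statements directly from Proposition \ref{prop:hessclassform}, which already establishes the Chern-class formula for the scheme $\HHess(X,h)$ on an arbitrary smooth base. The corollary is then a matter of specializing the base to $\Fl(n)$ and, for part (b), passing to the equivariant setting via the Borel construction.

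For part (a), I would take $Z = \Fl(n)$, which is smooth, and let $U_\bullet = S_\bullet$ be the tautological flag, so that $W_i = V/S_{n-i} = Q_i$ and hence $\ker(W_i \to W_{i-1}) = \ker(Q_i \to Q_{i-1})$. By hypothesis $\HHess(X,h)$ has expected codimension, so Proposition \ref{prop:hessclassform} applies and gives $[\HHess(X,h)] = \Sch_{w(h)}(x_1,\ldots,x_n;x_n,\ldots,x_1)$ with $x_i = c_1(\ker(Q_i \to Q_{i-1}))$. The reducedness hypothesis means that $\HHess(X,h) = \HHess(X,h)_{\mathrm{red}} = \Hess(X,h)$, so the two cohomology classes coincide and the formula in part (a) follows.

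For part (b), the key observation is that everything in sight is $T$-equivariant. Since $T$ commutes with $X$, the endomorphism $X$ is a map of $T$-equivariant bundles, and the tautological flag $S_\bullet$ is $T$-equivariant; hence $\HHess(X,h)$ is a $T$-invariant subscheme. I would compute $H_T^*\Fl(n)$ via finite-dimensional approximations of the Borel mixing space $ET \times_T \Fl(n)$, each of which is smooth. The bundles $U_\bullet$, $W_\bullet$ and the endomorphism $X$ descend to these approximations, and the degeneracy-locus construction of Section \ref{subsec:degeneracy} commutes with this base change, so the induced locus is $ET \times_T \HHess(X,h)$. The formula (\ref{eqn:deglocform}) then applies on each smooth approximation, and by definition the equivariant Chern classes $c_1^T$ are the ordinary Chern classes computed there; the identification $\ker(W_i \to W_{i-1}) = U_{n+1-i}/U_{n-i}$ that powers the proof of Proposition \ref{prop:hessclassform} is equally valid equivariantly. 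This yields $[\HHess(X,h)]^T = \Sch_{w(h)}(x_1,\ldots,x_n;x_n,\ldots,x_1)$ in $H_T^*\Fl(n)$, and reducedness gives $[\Hess(X,h)]^T = [\HHess(X,h)]^T$ exactly as in part (a).

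The main thing to check carefully is that the two hypotheses, expected codimension and reducedness, transfer from $\Fl(n)$ to the mixing space. Both follow from the fact that $ET \times_T \HHess(X,h) \to BT$ is a fiber bundle with fiber $\HHess(X,h)$: the projection is flat (indeed locally trivial), so the codimension of the locus in the total space equals its codimension in the fiber $\Fl(n)$, and reducedness of the fiber forces reducedness of the total space. This fiber-bundle bookkeeping is the one place where a little geometric care is needed; the remainder is a formal specialization of the proposition.
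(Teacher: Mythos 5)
Your proposal is correct and takes essentially the same approach as the paper: part (a) is exactly the paper's argument (reducedness gives $\HHess(X,h)=\Hess(X,h)$, so Proposition \ref{prop:hessclassform} on $Z=\Fl(n)$ applies directly), and for part (b) the paper likewise just applies Proposition \ref{prop:hessclassform} with $Z = ET\times^T\Fl(n)$. Your additional bookkeeping---finite-dimensional approximations of the mixing space, descent of the bundles and of $X$, and the transfer of the expected-codimension and reducedness hypotheses along the fiber bundle over $BT$---is precisely the detail the paper leaves implicit.
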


\begin{proof}
The first statement is immediate, since $\HHess(X,h) = \Hess(X,h)$.  For the second, apply Proposition \ref{prop:hessclassform} with $Z = ET \times^T \Fl(n)$.
\end{proof}

\begin{remark}
The variables corresponding to torus weights do not appear explicitly in our formula for $[\Hess(X,h)]^T$ --- the equivariant part of the formula lies in the definition of $x_i$ as an equivariant Chern class.
\end{remark}

\begin{remark}
Note that the class of $\HHess(X,h)$ does not depend on $X$, so long as $X$ is sufficiently generic to guarantee that this scheme has expected codimension.  For this reason, we often write ${[\Hess_\lambda]}$ for this class, where $\lambda$ is the partition corresponding to $h$.

Without such a genericity restriction on $X$, $\HHess(X,h)$ may not have expected codimension.  Indeed, the Hessenberg variety $\mathcal{H}(X,h)$ need not be pure dimensional \cite{t2}, and general Hessenberg varieties have no known closed dimension formula.

In general, the question of which Hessenberg schemes are reduced and pure dimensional of expected codimension depends on both $X$ and $h$.  One situation where this holds is when $X$ is a regular semisimple operator: in this case $\Hess(X,h)$ is smooth and pure dimensional of dimension $\sum_{i=1}^n h(i)-i$.  In fact, this holds for arbitrary Lie type; this is the content of \cite[Theorem 6]{mps}, and the fact that $\HHess(X,h)$ is reduced follows from the proof.  (Their proof works verbatim scheme-theoretically, and shows that a certain subscheme $G_H\subseteq G$ is smooth; the main point is that, in type $A$, the equations defining $G_H$ are the same as those defining the Hessenberg scheme $\HHess(X,h)$, so the latter is also smooth, hence reduced.)  These Hessenberg varieties are connected if and only if $h(i) \geq i+1$ for each $i<n$; see Appendix \ref{sec:connected}.
\end{remark}

\begin{remark} \label{remark:conjugacy}
Let $M'_n$ be the space of trace-zero matrices.  Given a Hessenberg function $h$, let $H\subset M'_n$ be the \define{Hessenberg space} of matrices given by
\begin{eqnarray*}
H = \{M \in M'_n: M_{ij} = 0 \textup{ if } i > h(j)\}.
\end{eqnarray*}
Then the Hessenberg variety of $X$ and $h$ may also be written
\begin{eqnarray*}
\Hess(X,H) = \{[x] \in SL_n/B: x^{-1}Xx \in H\}.
\end{eqnarray*}
>From this description, it is clear that $g\cdot\Hess(X,h) = \Hess(g^{-1}Xg,h)$ for $g\in SL_n$, where $SL_n$ acts on $\Fl=SL_n/B$ by left multiplication.  In particular, the class $[\Hess(X,h)]$ only depends on the conjugacy class of $X$.
\end{remark}

Define the \define{(generalized) Hessenberg coefficients} $m_{w,u}(v)$ to be the integers such that
\begin{eqnarray} 
\Sch_{w}(x;x^v) = \sum_u m_{w,u}(v)\,\Sch_u(x),
\end{eqnarray}
where $x^v = (x_{v(1)},x_{v(2)},\ldots)$ is the permutation of the variables $x$ by $v$.  We are interested in the case $v=w_0\in S_n$ and $w = w(h) = w(\lambda)$, so we will write $\tilde{x} = x^{w_0}$ and $m_{\lambda,u}^{(n)} = m_{w(\lambda),u}(w_0)$.  Thus we have
\begin{eqnarray} \label{Hessenberg coefficients}
{[\Hess_\lambda]} = \sum_{u\in S_n} m_{\lambda,u}^{(n)}\, {[\Omega_u]}.
\end{eqnarray}
Equivalently, since ${[\Omega_{w_0\,u}]}$ is Poincar\'e dual to ${[\Omega_u]}$, we have $m_{\lambda,u}^{(n)} = {[\Hess_\lambda]}\cdot {[\Omega_{w_0\,u}]}$.  Since $\Fl$ is a homogeneous variety, Kleiman's transversality theorem \cite[III.10]{ha} implies that $m_{\lambda,u}^{(n)} \geq 0$.  This is a special case of a simple fact about specializations of Schubert polynomials:
\begin{proposition} \label{prop:positive}
Let $w\in S_n$, and let $w_0$ be the longest permutation in $S_n$.  In the expansion of $\Sch_w(x;x^{w_0})$ in the basis of Schubert polynomials, the coefficient of $\Sch_u(x)$ is nonnegative for all $u \in S_n$.
\end{proposition}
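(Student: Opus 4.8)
The plan is to give a geometric proof via Kleiman transversality, exactly as the paper hints in the sentence preceding the statement. The key observation is that the specialization $\Sch_w(x;x^{w_0})$ should be realized as the cohomology class of an honest subvariety (or at worst an effective cycle) of $\Fl(n)$, after which positivity of the Schubert coefficients is automatic. First I would interpret the specialization using the degeneracy-locus machinery of Section~\ref{subsec:degeneracy}. By formula~\eqref{eqn:deglocform}, a double Schubert polynomial $\Sch_w(x;y)$ represents the class of a degeneracy locus $\OOmega_w(X)$ whenever that locus has expected codimension $\ell(w)$; and by Proposition~\ref{prop:hessclassform}, the particular specialization $y_i = x_{n+1-i}$, i.e. $y = x^{w_0}$, is precisely the one that arises when the two flags are set up so that $x_i = c_1(\ker(W_i\to W_{i-1}))$ and $y_i = c_1(U_i/U_{i-1})$ coincide via $x_i = y_{n+1-i}$. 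Thus $\Sch_w(x;x^{w_0})$ is exactly the Hessenberg-type specialization, and for a generic (regular semisimple) choice of $X$ the corresponding scheme has expected codimension and is reduced, hence is a genuine subvariety of $\Fl(n)$ whose class is $\Sch_w(x;x^{w_0})$.

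Next I would extract the coefficients as intersection numbers. Writing $\Sch_w(x;x^{w_0}) = \sum_u c_u\,[\Omega_u]$ in $H^*(\Fl)$, Poincaré duality on the flag variety gives $[\Omega_u]\cdot[\Omega_{w_0 u}] = [\mathrm{pt}]$ and $[\Omega_u]\cdot[\Omega_v] = 0$ for $v\neq w_0 u$ in complementary degree, so $c_u = \Sch_w(x;x^{w_0})\cdot [\Omega_{w_0 u}]$, evaluated against the fundamental class. This realizes each coefficient as the intersection number of an effective class with a Schubert class. Now I would invoke Kleiman's transversality theorem~\cite[III.10]{ha}: since $SL_n$ acts transitively on $\Fl(n)$, a generic translate $g\cdot\Omega_{w_0 u}$ meets our Hessenberg subvariety transversally (and properly), so the intersection number $c_u$ counts points of a transverse intersection of two subvarieties and is therefore $\geq 0$.

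The one genuine obstacle is ensuring that the geometric realization is valid for \emph{every} $w\in S_n$, not merely the dominant $w(h)$ appearing elsewhere in the paper. For a general $w$ the degeneracy locus $\OOmega_w(X)$ need not have expected codimension, so the class $\Sch_w(x;x^{w_0})$ may not be represented by a single reduced subvariety directly. I would handle this by choosing the endomorphism $X$ to be regular semisimple: then the pair of flags can be arranged in general position relative to $X$, the transversality in \cite[Theorem~8.2]{flags} applies, and the locus has the expected codimension. Alternatively, and more robustly, I would argue that the specialized class is an effective combination of products of Chern classes of globally generated bundles, or simply that it equals $[\OOmega_w(X)]$ for generic $X$ with each component appearing with positive multiplicity; in either formulation the class is effective, and Kleiman's theorem yields $c_u\geq 0$. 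The crux, then, is purely the geometric input that a generically-chosen representative is an effective cycle; the positivity of the Schubert coefficients follows formally once that is in hand.
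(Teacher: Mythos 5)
Your overall strategy---realize $\Sch_w(x;x^{w_0})$ as the class of an effective cycle, then extract each coefficient as $\Sch_w(x;x^{w_0})\cdot[\Omega_{w_0\,u}]$ by Poincar\'e duality and conclude nonnegativity via Kleiman---is a genuinely different route from the paper's, and the second half of it is fine. But the step you yourself call ``the one genuine obstacle'' is not resolved by anything you write, and as it stands this is a real gap. For a non-dominant $w$, the degeneracy locus $\OOmega_w(X)\subseteq\Fl(n)$ has \emph{both} of its flags equal to the tautological flag (one of them composed with $X$), so there is no ``pair of flags'' that can be ``arranged in general position relative to $X$''---they cannot be moved independently, which is precisely why the naive Kleiman argument does not apply. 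Moreover, \cite[Theorem 8.2]{flags} contains no transversality or genericity assertion: it \emph{assumes} expected codimension and outputs the class formula, so citing it for ``the transversality\ldots applies'' invokes the theorem for something it does not say. Finally, regular semisimplicity of $X$ is only known to force expected codimension for the \emph{dominant} permutations $w(h)$ (this is \cite[Theorem 6]{mps}, used in Section \ref{sec:hessenberg}); neither the paper nor your argument establishes it for arbitrary $w\in S_n$, and your fallback (``an effective combination of products of Chern classes of globally generated bundles'') is likewise unsubstantiated.

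The gap is fixable, but it requires an argument you did not give. For invertible $X$, the locus $\OOmega_w(X)$ is isomorphic to the intersection of the universal double Schubert variety $\{(A_\bullet,B_\bullet)\;:\;\dim(A_p\cap B_{n-q})\ge p-r_w(q,p)\}\subset\Fl\times\Fl$, which has codimension $\ell(w)$, with the graph $\Gamma_X=\{(XF_\bullet,F_\bullet)\}$. Since $\Gamma_X$ is the translate of the diagonal by $(X,1)$, and $PGL_n\times PGL_n$ \emph{does} act transitively on $\Fl\times\Fl$, Kleiman's theorem \cite[III.10]{ha} applied to this action shows that for generic $X$ the intersection is proper; only then does \eqref{eqn:deglocform} apply to make $\Sch_w(x;x^{w_0})$ an effective class, after which your coefficient extraction goes through. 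Note that the paper's actual proof avoids all of this geometry: it combines the identity $\Sch_w(x;y)=\sum_{v^{-1}u=w}\Sch_u(x)\,\Sch_v(-y)$ with the duality automorphism $D$ to rewrite $\Sch_w(x;\tilde{x})$, modulo the ideal spanned by $\{\Sch_u\,:\,u\notin S_n\}$, as $\sum_{v^{-1}u=w}[\Omega_u]\cdot[\Omega_{w_0\,v\,w_0}]$, a sum of products of Schubert classes; positivity then follows from the classical fact that such products expand nonnegatively, where transversality is available because those Schubert varieties can be translated independently.
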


\begin{proof}
Let $D: \Fl(n) \to \Fl(n)$ be the duality automorphism that sends $(F_1 \subset \cdots \subset F_n = \C^n)$ to $( (F_n/F_{n-1})^* \subset \cdots \subset (F_n/F_1)^* \subset F_n^* = \C^n)$.  Then $D^*(x_i) = -x_{n+1-i}$, and $D^{-1}\Omega_w(E_\bullet) = \Omega_{w_0\,w\,w_0}(\tilde{E}_\bullet)$, where $E_\bullet$ is the standard flag in $\C^n$ (with $E_i$ spanned by standard basis vectors $e_1,\ldots,e_i$), and $\tilde{E}_\bullet$ is the opposite flag (with $\tilde{E}_i$ spanned by $e_n,\ldots,e_{n+1-i}$).  Therefore we have
\begin{eqnarray*}
\Sch_{v}(-\tilde{x}) = D^*\Sch_v(x) \equiv \Sch_{w_0\,v\,w_0}(x),
\end{eqnarray*}
modulo the ideal defining $H^*\Fl(n)$.  Now using the identity
\begin{eqnarray} \label{eqn:switch-xy}
\Sch_w(x;y) = \sum_{v^{-1}u=w} \Sch_u(x)\cdot\Sch_v(-y)
\end{eqnarray}
from \cite[(6.3)]{mac}, we see that
\begin{eqnarray} \label{eqn:exp}
\Sch_w(x;\tilde{x}) \equiv \sum_{v^{-1}u=w} \Sch_u(x)\cdot\Sch_{w_0\,v\,w_0}(x).
\end{eqnarray}
Since the ideal is just the submodule of $\Z{[x_1,\ldots,x_n]}$ spanned by $\{\Sch_u\,|\, u\not\in S_n\}$, and the right hand side of (\ref{eqn:exp}) has a positive expansion in the basis of Schubert polynomials, the proposition follows.
\end{proof}

Proposition \ref{prop:positive} does not hold for specializations at other permutations.  For example, $\Sch_{213}(x;x^{231}) = x_1 - x_2 = 2\,\Sch_{213}(x) - \Sch_{132}(x)$.

Hessenberg coefficients exhibit a symmetry corresponding to the symmetry sending a partition $\lambda$ to the partition $\lambda'$ with rows and columns transposed.
\begin{proposition} \label{prop:symmetry}
For $w,u\in S_n$, we have $m_{w,u}(w_0) = m_{w^{-1},\,w_0\,u\,w_0}(w_0)$.  In particular, since $w(\lambda') = w(\lambda)^{-1}$, we have $m_{\lambda,u}^{(n)} = m_{\lambda',\,w_0\,u\,w_0}^{(n)}$.
\end{proposition}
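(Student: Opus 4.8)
The plan is to prove the general identity $m_{w,u}(w_0)=m_{w^{-1},\,w_0\,u\,w_0}(w_0)$ directly, and then to obtain the partition statement by specializing $w=w(\lambda)$ and invoking $w(\lambda')=w(\lambda)^{-1}$ from Section~\ref{sec:perms}. The engine is the duality automorphism $D$ from the proof of Proposition~\ref{prop:positive}, which satisfies $D^*(x_i)=-x_{n+1-i}$ and $D^*[\Omega_u]=[\Omega_{w_0\,u\,w_0}]$.

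First I would rewrite each Hessenberg coefficient as an intersection number. Since $[\Omega_{w_0\,u}]$ is Poincar\'e dual to $[\Omega_u]$, extracting the coefficient of $\Sch_u$ from the class $\Sch_w(x;\tilde x)$ in $H^*\Fl(n)$ is the same as the pairing
\[
m_{w,u}(w_0)=\int_{\Fl}\Sch_w(x;\tilde x)\cdot[\Omega_{w_0\,u}].
\]
Because $D$ is an automorphism it preserves this integral, so applying $D^*$ to both factors leaves the left side unchanged. On the Schubert class we get $D^*[\Omega_{w_0\,u}]=[\Omega_{w_0(w_0\,u)w_0}]=[\Omega_{u\,w_0}]$; on the polynomial, the substitution $x_i\mapsto -x_{n+1-i}$ sends $x_j\mapsto -\tilde x_j$ and $\tilde x_j\mapsto -x_j$, so $D^*\Sch_w(x;\tilde x)=\Sch_w(-\tilde x;-x)$. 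This yields
\[
m_{w,u}(w_0)=\int_{\Fl}\Sch_w(-\tilde x;-x)\cdot[\Omega_{u\,w_0}].
\]

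The key step is then to show $\Sch_w(-\tilde x;-x)\equiv\Sch_{w^{-1}}(x;\tilde x)$ modulo the ideal defining $H^*\Fl(n)$. I would expand the left side with \eqref{eqn:switch-xy}: writing $x'=-\tilde x$ and $y'=-x$ gives $\Sch_w(x';y')=\sum_{v^{-1}u=w}\Sch_u(-\tilde x)\,\Sch_v(x)$, and the congruence $\Sch_u(-\tilde x)\equiv\Sch_{w_0\,u\,w_0}(x)$ from the proof of Proposition~\ref{prop:positive} turns this into $\sum_{v^{-1}u=w}\Sch_{w_0\,u\,w_0}(x)\,\Sch_v(x)$. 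On the other side, \eqref{eqn:exp} gives $\Sch_{w^{-1}}(x;\tilde x)\equiv\sum_{(v')^{-1}u'=w^{-1}}\Sch_{u'}(x)\,\Sch_{w_0\,v'\,w_0}(x)$. The substitution $(u',v')=(v,u)$ is a length-preserving bijection between the two index sets (using $v^{-1}u=w\iff u^{-1}v=w^{-1}$ together with $\ell(w)=\ell(w^{-1})$) that matches the two products term by term, which proves the congruence. Combining with the previous display and using that $[\Omega_{u\,w_0}]$ is dual to $[\Omega_{w_0\,u\,w_0}]$ identifies the right side with $m_{w^{-1},\,w_0\,u\,w_0}(w_0)$, completing the general identity.

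The main obstacle is bookkeeping rather than conceptual: I must track the action of $D^*$ simultaneously on the polynomial variables and on the Schubert classes, and verify that the reindexing $(u,v)\mapsto(v,u)$ genuinely respects the length condition $\ell(u)+\ell(v)=\ell(w)$ implicit in \eqref{eqn:switch-xy}. (An equivalent but more computational route would bypass the polynomial congruence, writing $m_{w,u}(w_0)$ as a sum of Schubert structure constants $c^u_{u',w_0\,v\,w_0}$ via \eqref{eqn:exp} and applying the $w_0$-conjugation symmetry $c^t_{a,b}=c^{w_0\,t\,w_0}_{w_0\,a\,w_0,\,w_0\,b\,w_0}$, which itself follows from invariance of triple intersection numbers under $D$.) Once the congruence $\Sch_w(-\tilde x;-x)\equiv\Sch_{w^{-1}}(x;\tilde x)$ is in hand, the final ``in particular'' assertion is immediate: taking $w=w(\lambda)$ and using $w(\lambda)^{-1}=w(\lambda')$ gives $m^{(n)}_{\lambda,u}=m^{(n)}_{\lambda',\,w_0\,u\,w_0}$.
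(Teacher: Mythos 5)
Your proof is correct, and it shares the paper's engine---the duality automorphism $D$ and the identification of $\Sch_w(-\tilde{x};-x)$ with $\Sch_{w^{-1}}(x;\tilde{x})$---but it justifies that identification by a different route. The paper's proof is shorter: it applies $D^*$ to the expansion $\Sch_w(x;\tilde{x})\equiv\sum_u m_{w,u}(w_0)\,\Sch_u(x)$ and then invokes the exact polynomial identity $\Sch_w(x;y)=\Sch_{w^{-1}}(-y;-x)$ from \cite[(6.4)]{mac}, which gives $\Sch_w(-\tilde{x};-x)=\Sch_{w^{-1}}(x;\tilde{x})$ on the nose; comparing coefficients in the basis $\{[\Omega_u]\,|\,u\in S_n\}$ of $H^*\Fl(n)$ finishes the argument. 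You avoid citing (6.4) and instead recover the identification, modulo the ideal, from the Cauchy-type formula \eqref{eqn:switch-xy} together with the congruence $\Sch_u(-\tilde{x})\equiv\Sch_{w_0\,u\,w_0}(x)$ from the proof of Proposition \ref{prop:positive}, matching the two expansions under the bijection $(u,v)\mapsto(v,u)$; this bijection does respect the length condition since inversion swaps the factors and $\ell(w)=\ell(w^{-1})$, so the step is sound. What your route buys is self-containedness---every ingredient is already quoted in the paper---at the price of extra bookkeeping and of obtaining the key identity only as a congruence, which suffices because the coefficients $m_{w,u}(w_0)$ with $u\in S_n$ are determined in $H^*\Fl(n)$. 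Your framing via intersection numbers ($m_{w,u}(w_0)=\int_{\Fl}\Sch_w(x;\tilde{x})\cdot[\Omega_{w_0\,u}]$, with $D$ preserving the integral and sending $[\Omega_{w_0\,u}]$ to $[\Omega_{u\,w_0}]$) is an equivalent repackaging of the paper's direct coefficient comparison rather than a genuinely different argument.
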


\begin{proof}
Again, let $D: \Fl(n) \to \Fl(n)$ be the duality automorphism.  We have
\begin{eqnarray*}
D^*\Sch_w(x;\tilde{x}) = \Sch_w(-\tilde{x};-x) &\equiv& \sum_{u\in S_n} m_{w,u}(w_0)\, D^*\Sch_u(x) \\
&\equiv& \sum_{u\in S_n} m_{w,u}(w_0)\, \Sch_{w_0\,u\,w_0}(x).
\end{eqnarray*}
The identity $\Sch_w(x;y) = \Sch_{w^{-1}}(-y;-x)$ from \cite[(6.4)]{mac} shows $\Sch_w(-\tilde{x};-x) = \Sch_{w^{-1}}(x;\tilde{x})$, so the proposition follows.
\end{proof}

\section{Dominant permutations} \label{sec:perms}

The permutations indexing Hessenberg schemes form a very special class called \define{dominant permutations}.  Given a partition $\lambda$, one can construct a permutation $w=w(\lambda)$ in $S_n$, for $n\geq \lambda_i+i$ for all $i$, as follows:\footnote{In the literature, the notation $w(\lambda)$ is sometimes used for the \emph{Grassmannian permutation} corresponding to $\lambda$, whereas for us this denotes a \emph{dominant permutation}.  With the exception of $\lambda = (1,\ldots,1,0,\ldots,0)$, these two notions are different.} Write $w = {[w(1), w(2), \ldots, w(n)]}$, setting $w(1)=\lambda_1+1$, and taking $w(i)$ to be the $(\lambda_i+1)$st element of $\{1,\ldots,n\}\setminus \{w(1),\ldots,w(i-1)\}$.  The permutations constructed this way are called dominant.  (Dominant permutations can also be characterized as those permutations whose \emph{diagrams} are Young diagrams; see \cite[\S1]{mac}, \cite{flags} for the notion of the diagram of a permutation.)

The Schubert polynomials corresponding to dominant permutations have a simple ``monomial'' form.  Specifically,
\begin{eqnarray*}
\Sch_{w(\lambda)}(x;y) = \prod_{(i,j)\in\lambda}(x_i - y_j) .
\end{eqnarray*}
This can be proved combinatorially or algebraically (\cite[(6.14)]{mac}), but we give a geometric reason.

\begin{proposition} \label{prop:geometric-monomial}
Let $\lambda$ be a partition such that $\lambda_i \leq n-i$, and let $S^{(2)}_{\bullet} = \pi_2^*S_{\bullet}$ and $Q^{(1)}_{\bullet} = \pi_1^*Q$ be the universal bundles on $\Fl(n)\times \Fl(n)$.  Then $\OOmega_{w(\lambda)} \subseteq \Fl \times \Fl$ is the zero locus of a section of the vector bundle
\begin{eqnarray*}
K = \ker\left(\bigoplus_{p=1}^n \Hom(S^{(2)}_p, Q^{(1)}_{\lambda_p}) \xrightarrow{f} \bigoplus_{p=1}^n \Hom(S^{(2)}_p, Q^{(1)}_{\lambda_{p+1}}) \right),
\end{eqnarray*}
where the map $f$ is
\begin{eqnarray*}
(S_p \xrightarrow{\phi} Q_{\lambda_p}) \mapsto (S_p \xrightarrow{\phi} Q_{\lambda_p} \to Q_{\lambda_{p+1}}) - (S_{p-1} \subset S_p \xrightarrow{\phi} Q_{\lambda_p}).
\end{eqnarray*}
\end{proposition}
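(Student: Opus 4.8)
The plan is to identify $\OOmega_{w(\lambda)}$ set-theoretically as the zero locus $\Zeroes(s)$ of a natural section $s$ of $K$, and then argue that the scheme structures agree by a codimension/degeneracy-locus count. First I would unwind the rank conditions defining $\OOmega_{w(\lambda)}(X)$ for $X = \mathrm{id}$ on $\Fl\times\Fl$: the dominant permutation $w(\lambda)$ has the property that its rank matrix is determined by the ``essential'' conditions $r_{w(\lambda)}(p,\lambda_p)=\,$(something forcing $S^{(2)}_p$ to map into $Q^{(1)}_{\lambda_p}$ to be zero past a threshold). Concretely, because the diagram of $w(\lambda)$ is the Young diagram of $\lambda$, the defining conditions reduce to the single family $\dim(S^{(2)}_p \cap \ker(V\to Q^{(1)}_{\lambda_p})) \geq$ the appropriate rank, i.e. the composite $S^{(2)}_p \hookrightarrow V \to Q^{(1)}_{\lambda_p}$ vanishes for each $p$. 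This is exactly the condition cut out by a section of $\bigoplus_p \Hom(S^{(2)}_p, Q^{(1)}_{\lambda_p})$.

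Next I would explain why one passes to the kernel bundle $K$ rather than the full sum $\bigoplus_p \Hom(S^{(2)}_p, Q^{(1)}_{\lambda_p})$. The point is compatibility: the conditions for consecutive $p$ overlap, since $S^{(2)}_{p-1}\subset S^{(2)}_p$ and $Q^{(1)}_{\lambda_{p+1}}$ is a quotient of $Q^{(1)}_{\lambda_p}$ (using $\lambda_{p+1}\leq\lambda_p$). The map $f$ in the statement records precisely these two redundancies: given a tuple of homomorphisms $\phi_p: S_p\to Q_{\lambda_p}$ coming from a single global endomorphism (here the identity), the image of $\phi_p$ in $Q_{\lambda_{p+1}}$ must agree with $\phi_{p+1}$ restricted appropriately, and $\phi_p$ restricted to $S_{p-1}$ must agree with $\phi_{p-1}$ pushed into $Q_{\lambda_p}$. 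So the genuine, non-redundant data of the collection of composites $S^{(2)}_p\to Q^{(1)}_{\lambda_p}$ lives in $K=\ker f$, and the section $s$ of $K$ whose vanishing we want is the one assembling the tautological composites $S^{(2)}_p\hookrightarrow V = \pi_1^*Q_n \to Q^{(1)}_{\lambda_p}$ for all $p$. I would verify that this tuple indeed lies in $\ker f$ by checking the two defining relations hold tautologically, which is a direct diagram chase.

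With $\OOmega_{w(\lambda)} = \Zeroes(s)$ established set-theoretically, I would finish by a dimension count to pin down the scheme structure. The rank of $K$ should equal $\sum_p \lambda_p = |\lambda| = \ell(w(\lambda))$, which is the expected codimension of the degeneracy locus; this is computed from $\rk K = \sum_p \lambda_p \cdot p - (\text{corrections from } f)$, and the telescoping built into $f$ collapses to $\sum_p \lambda_p$. Since $\OOmega_{w(\lambda)}$ has the expected codimension $\ell(w(\lambda))$ (Schubert/degeneracy-locus theory, as in the cited \cite{flags}), and $\Zeroes(s)$ has codimension at most $\rk K = \ell(w(\lambda))$, the two agree in codimension; as $s$ vanishes exactly on $\OOmega_{w(\lambda)}$ and the locus is Cohen--Macaulay of the expected codimension, $s$ is a regular section and its zero scheme equals $\OOmega_{w(\lambda)}$ as schemes. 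The Giambelli/monomial formula $\Sch_{w(\lambda)} = \prod_{(i,j)\in\lambda}(x_i-y_j)$ then follows by taking the Euler class (top Chern class) of $K$ and factoring it using $c(K) = \prod_p c(\Hom(S^{(2)}_p,Q^{(1)}_{\lambda_p}))/\prod_p c(\Hom(S^{(2)}_p,Q^{(1)}_{\lambda_{p+1}}))$, which telescopes into the advertised product of binomials.

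The main obstacle I expect is verifying carefully that the tautological composites really do assemble into a section of the \emph{kernel} $K$ and that $f$ is surjective (so that $K$ is a genuine vector bundle of the claimed rank). The surjectivity of $f$ — equivalently, that the overlap conditions are independent in the expected way — is what guarantees $\rk K = |\lambda|$ and hence that the codimension count closes up; if $f$ failed to be surjective, $K$ would have the wrong rank and the Euler-class computation would not reproduce the monomial formula. Checking this amounts to a local, fiberwise linear-algebra verification that the two maps in the definition of $f$ are jointly surjective onto $\bigoplus_p \Hom(S_p, Q_{\lambda_{p+1}})$, which I would reduce to the combinatorics of the inclusions $\lambda_{p+1}\leq\lambda_p$ and $S_{p-1}\subset S_p$.
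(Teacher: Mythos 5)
Your proposal is correct and takes essentially the same approach as the paper, whose entire proof is a one-line appeal to the analogous result of Fulton \cite[Proposition 7.5]{flags}: realizing $\OOmega_{w(\lambda)}$ as the zero scheme of the tautological section assembled from the composites $S^{(2)}_p \to Q^{(1)}_{\lambda_p}$ (with $f$ surjective by back-substitution through the surjections $Q^{(1)}_{\lambda_p}\to Q^{(1)}_{\lambda_{p+1}}$, so that $K$ is a genuine subbundle of rank $|\lambda| = \ell(w(\lambda))$) and closing the argument by the codimension count is exactly the content of that citation. Your concluding telescoping of $c_{top}(K)$ likewise reproduces the Euler-class computation the paper performs immediately after the proposition to deduce the monomial formula.
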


The proof is similar to that of \cite[Proposition 7.5]{flags}.  Since the codimension of $\OOmega_{w(\lambda)}$ is equal to the rank of $K$, it follows that ${[\OOmega_{w(\lambda)}]} = c_{top}(K)$, which is 
\begin{eqnarray*}
c_{top}(K) &=& \frac{c_{top}\left(\bigoplus \Hom(S_p,Q_{\lambda_p})\right)}{c_{top}\left(\bigoplus \Hom(S_p,Q_{\lambda_{p+1}})\right)} \\
&=& \prod_{(i,j)\in\lambda}(x_i - y_j).
\end{eqnarray*}
(The dominant Schubert varieties $\Omega_{w(\lambda)}$ are actually smooth, so the restriction of $K$ is the normal bundle.  There are many ways of showing that $\Omega_{w(\lambda)}$ is smooth; see, e.g., \cite[p.\ 37]{dmr} for an explicit statement of this fact.)

In fact, the converse is true: only Schubert polynomials with this monomial form correspond to dominant permutations.  To see this, we use the combinatorics of \define{pipe dreams}, which give a combinatorial model for the terms in a Schubert polynomial.  See \cite{ms} or \cite{km} for details and examples.

A pipe dream in $S_n$ is a subset of boxes from the $n \times n$ grid, which we represent as boxes filled with $+$.  Each pipe dream defines a permutation as follows.  Fill each blank box in the pipe dream with the (oriented) tile 
\begin{picture}(10,10)(0,0)
\multiput(0,10)(0,-10){2}{\line(1,0){10}}
\multiput(0,10)(10,0){2}{\line(0,-1){10}}
\multiput(1,6)(1,1){4}{\circle*{1}}
\multiput(6,1)(1,1){4}{\circle*{1}}
\end{picture}.  If the strand leaving the $i^{th}$ row of the left side of the pipe dream (reading down) enters the $j^{th}$ column on top of the pipe dream (reading right), then the permutation $w$ corresponding to the pipe dream has $w(i)=j$.  A pipe dream is said to be reduced if no two of its strands cross more than once.

The key theorem we use is due to Billey--Jockusch--Stanley \cite{bjs} and Fomin--Stanley \cite{fs} in the single case, and Fomin--Kirillov \cite{fk} in the double case; see also \cite[Corollary 2.1.5]{km}.
\begin{theorem} {\cite{bjs,fs,fk}} \label{thm:pipedream}
The (single or double) Schubert polynomial $\mathfrak{S}_w$ 
is the sum of monomials over all reduced pipe dreams for $w$.
\end{theorem}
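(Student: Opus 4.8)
The plan is to introduce the pipe-dream generating function $\mathfrak{P}_w(x;y)=\sum_D\prod_{(i,j)\in D}(x_i-y_j)$, the sum over reduced pipe dreams $D$ for $w$, and to prove $\mathfrak{P}_w=\Sch_w(x;y)$; setting $y=0$ then yields the single case, so I treat the double polynomials throughout. Recall the inductive characterization of double Schubert polynomials: $\Sch_{w_0}(x;y)=\prod_{i+j\le n}(x_i-y_j)$, and $\partial_i\Sch_w=\Sch_{ws_i}$ whenever $\ell(ws_i)<\ell(w)$. Because repeatedly applying divided differences descends from $w_0$ to every $w\in S_n$, it suffices to check that $\mathfrak{P}_w$ obeys the same two conditions, i.e. $\mathfrak{P}_{w_0}=\prod_{i+j\le n}(x_i-y_j)$ and $\partial_i\mathfrak{P}_w=\mathfrak{P}_{ws_i}$ for every descent $i$ of $w$.

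The base case is immediate: $w_0$ has $\binom{n}{2}$ inversions, forcing each of the $n$ pipes to cross every other exactly once, so the only reduced pipe dream for $w_0$ is the full staircase $\{(i,j):i+j\le n\}$, whose weight is precisely $\prod_{i+j\le n}(x_i-y_j)$. To handle the recursion efficiently I would package all pipe dreams at once in the nilCoxeter algebra $\mathcal{N}_n$ on generators $u_1,\dots,u_{n-1}$ with $u_i^2=0$, the braid relations, and $u_iu_j=u_ju_i$ for $|i-j|\ge 2$; there a product $u_{i_1}\cdots u_{i_\ell}$ equals $u_w$ if $s_{i_1}\cdots s_{i_\ell}$ is reduced and vanishes otherwise. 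Assigning the crossing in box $(i,j)$ the weight $(x_i-y_j)$ and the generator $u_{i+j-1}$, and multiplying along the reading order of pipe dreams, produces the transfer element
\[
\mathfrak{G}=\prod_{i=1}^{n-1}R_i,\qquad R_i=\prod_{j=n-i}^{1}\bigl(1+(x_i-y_j)\,u_{i+j-1}\bigr)
\]
(the inner product taken right to left). Expanding $\mathfrak{G}$ and using that non-reduced pipe dreams yield non-reduced words, hence $0$, gives $\mathfrak{G}=\sum_{w\in S_n}\mathfrak{P}_w(x;y)\,u_w$. This is the bookkeeping device of Fomin--Stanley and Fomin--Kirillov.

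It remains to verify the recursion, which I claim is equivalent to the single operator identity $\partial_i\mathfrak{G}=\mathfrak{G}\,u_i$: reading off the coefficient of $u_w$ on both sides returns exactly $\partial_i\mathfrak{P}_w=\mathfrak{P}_{ws_i}$ when $\ell(ws_i)<\ell(w)$ and $\partial_i\mathfrak{P}_w=0$ otherwise. To prove $\partial_i\mathfrak{G}=\mathfrak{G}\,u_i$, observe that $\partial_i$ involves only $x_i,x_{i+1}$; by the twisted Leibniz rule $\partial_i(fg)=(\partial_i f)\,g+(s_i f)\,(\partial_i g)$ together with the $s_i$-invariance of $R_1\cdots R_{i-1}$ and $R_{i+2}\cdots R_{n-1}$, the operator slides past the flanking factors, leaving
\[
\partial_i\mathfrak{G}=R_1\cdots R_{i-1}\,\bigl(\partial_i(R_iR_{i+1})\bigr)\,R_{i+2}\cdots R_{n-1}.
\]
Since $u_i$ commutes with every $u_k$ appearing in $R_{i+2}\cdots R_{n-1}$, everything collapses to the local identity
\[
\partial_i\bigl(R_iR_{i+1}\bigr)=R_iR_{i+1}\,u_i.
\]
I expect this last identity to be the main obstacle: it is a Yang--Baxter-type relation whose two sides must be compared factor by factor in $\mathcal{N}_n$, where $\partial_i$ is not symmetric in rows $i$ and $i+1$ and the surviving terms emerge only after substantial cancellation among the pairs of factors sharing a common generator $u_{i+j-1}$ (equivalently, this local identity is what encodes the ``ladder move''/mitosis bijection on reduced pipe dreams). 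Once it is in hand, the reduction above gives $\partial_i\mathfrak{G}=\mathfrak{G}u_i$, the base case anchors the descending induction at $w_0$, and the theorem follows simultaneously in the single and double cases.
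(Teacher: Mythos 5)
First, a point of reference: the paper does not prove Theorem \ref{thm:pipedream} at all --- it quotes the result from \cite{bjs}, \cite{fs}, \cite{fk} and uses only the corollary that a permutation with more than one reduced pipe dream cannot have a monomial Schubert polynomial. So the comparison is with those sources, and your outline is precisely the Fomin--Stanley/Fomin--Kirillov nilCoxeter argument: the packaging $\mathfrak{G}=\sum_w\mathfrak{P}_w(x;y)\,u_w$ (non-reduced words vanish in $\mathcal{N}_n$), the base case at $w_0$, the equivalence of the operator identity $\partial_i\mathfrak{G}=\mathfrak{G}u_i$ with the divided-difference recursion for the coefficients, and the Leibniz-rule reduction to the two factors $R_iR_{i+1}$ (using that $u_i$ commutes with all generators $u_k$, $k\geq i+2$, appearing in $R_{i+2}\cdots R_{n-1}$) are all correct, and they are exactly how those papers proceed.

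The genuine gap is the local identity $\partial_i(R_iR_{i+1})=R_iR_{i+1}u_i$, which you explicitly defer: it carries the entire mathematical content, and ``compare the two sides factor by factor and watch the cancellation'' is not an argument --- nor can case-by-case expansion ever cover all $n$ uniformly. The identity is true, and the clean mechanism is the Yang--Baxter equation plus additivity, not cancellation. Write $h_k(t)=1+tu_k$, so that $h_k(s)h_k(t)=h_k(s+t)$ (since $u_k^2=0$), and recall the nilCoxeter Yang--Baxter relation $h_k(s)\,h_{k+1}(s+t)\,h_k(t)=h_{k+1}(t)\,h_k(s+t)\,h_{k+1}(s)$. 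Since $s_iF=F+(x_{i+1}-x_i)\partial_iF$, your identity is equivalent to $s_i\bigl(R_iR_{i+1}\bigr)=R_iR_{i+1}\,h_i(c)$ with $c=x_{i+1}-x_i$, and the key numerical fact is the telescoping $(x_i-y_j)+c=(x_{i+1}-y_j)$ for every $j$. Now push the ``defect'' $h_i(c)$ leftward through $R_iR_{i+1}$: after far-commutations it abuts the triple $h_i(x_i-y_1)\,h_{i+1}(x_{i+1}-y_1)\,h_i(c)$, whose middle argument is the sum of the outer two, so Yang--Baxter converts it to $h_{i+1}(c)\,h_i(x_{i+1}-y_1)\,h_{i+1}(x_i-y_1)$; the defect has climbed one level, one factor of $R_i$ has had its $x_i$ promoted to $x_{i+1}$, and a factor in $x_i$ has been deposited exactly where $s_iR_{i+1}$ needs it. Iterating, the defect reaches the top as $h_{n-1}(c)$ and is absorbed by additivity, $h_{n-1}(x_i-y_{n-i})h_{n-1}(c)=h_{n-1}(x_{i+1}-y_{n-i})$, leaving exactly $s_i(R_iR_{i+1})$. (The case $n=3$, $i=1$ already exhibits the full pattern; setting $y=0$ gives the single-variable lemma of \cite{fs}, and the general case is the computation in \cite{fk}.) With this lemma supplied, the rest of your outline goes through.
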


For our purposes, we need no further detail of that theorem: we only use the fact that a permutation with more than one reduced pipe dream does not have a monomial Schubert polynomial.

\begin{proposition} \label{prop:not monomial}
The permutation $w$ is dominant if and only if $\mathfrak{S}_w$ is a monomial.
\end{proposition}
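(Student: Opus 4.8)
The plan is to prove the two implications by different means: the product formula handles ``dominant $\Rightarrow$ monomial'' in one line, and the pipe-dream model, together with the fact quoted after Theorem~\ref{thm:pipedream}, handles the converse.

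For the forward direction, suppose $w = w(\lambda)$ is dominant. By the product formula recorded just before Proposition~\ref{prop:geometric-monomial}, $\Sch_{w(\lambda)}(x;y) = \prod_{(i,j)\in\lambda}(x_i - y_j)$. Setting the $y$ variables to $0$ sends the factor attached to each box $(i,j)\in\lambda$ to $x_i$, so $\Sch_{w(\lambda)}(x) = \prod_{(i,j)\in\lambda} x_i = \prod_i x_i^{\lambda_i}$, a single monomial. This requires no further work.

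For the converse I would argue the contrapositive: if $w$ is not dominant, then $\Sch_w$ is not a monomial. Here I would use the standard characterization of dominance through the code $c(w) = (c_1,\dots,c_n)$, where $c_i = \#\{\, j > i \mid w(j) < w(i)\,\}$ is the length of row $i$ of the diagram of $w$; namely, $w$ is dominant exactly when its diagram is a Young diagram, equivalently when $c(w)$ is weakly decreasing. So if $w$ is not dominant there is an index $i$ with $c_i < c_{i+1}$. I would then start from the ``bottom'' reduced pipe dream of $w$, in which each row $r$ carries exactly $c_r$ crosses occupying the leftmost columns $1,\dots,c_r$ (see \cite{km}). Because $c_i < c_{i+1}$, the rightmost cross of row $i+1$ lies in a column strictly to the right of every cross of row $i$, so the $2\times 2$ block in rows $i, i+1$ at that column and the next has a cross only in its southwest corner. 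This is precisely the configuration in which a ladder move applies; performing it moves the cross to the northeast corner and yields a second reduced pipe dream for $w$, distinct from the first.

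Having exhibited two distinct reduced pipe dreams, I would invoke the fact quoted after Theorem~\ref{thm:pipedream} --- that a permutation with more than one reduced pipe dream cannot have a monomial Schubert polynomial --- to conclude that $\Sch_w$ is not a monomial. Together with the forward direction this proves the equivalence. I expect the genuine obstacle to be the combinatorial step above: verifying that the configuration forced by $c_i < c_{i+1}$ really does admit a valid ladder move, and that the move outputs a reduced pipe dream for the \emph{same} $w$ that is distinct from the bottom one (so that the hypothesis of the quoted fact is met). Everything else --- the forward implication and the passage from ``two reduced pipe dreams'' to ``not a monomial'' --- is immediate from the results already available.
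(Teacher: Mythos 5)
Your proof is correct, and it shares the paper's scaffolding --- the product formula $\Sch_{w(\lambda)}(x;y)=\prod_{(i,j)\in\lambda}(x_i-y_j)$ for the forward implication, and Theorem \ref{thm:pipedream} to translate ``$\Sch_w$ is a monomial'' into ``$w$ has exactly one reduced pipe dream'' --- but the heart of the converse is genuinely different. The paper's converse is a counting sandwich: by \cite[Proposition 16.24, Exercise 16.7]{ms}, every permutation has a unique top-justified and a unique left-justified reduced pipe dream, so a permutation with only one reduced pipe dream has that pipe dream simultaneously top- and left-justified, hence a Young diagram; since Young diagrams also enumerate the dominant permutations, and dominant permutations are among those with a unique pipe dream, the two classes must coincide. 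You instead argue by contrapositive and construct a second pipe dream explicitly: at an ascent $c_i<c_{i+1}$ of the code, the bottom (left-justified) pipe dream has its $2\times 2$ block in rows $i,i+1$ and columns $c_{i+1},c_{i+1}+1$ occupied only in the southwest corner, and a simple ladder move yields a distinct reduced pipe dream for the same $w$. Your configuration check is right: row $i$ is empty in those two columns precisely because $c_i<c_{i+1}$, row $i+1$ is empty beyond column $c_{i+1}$, and the relocated cross stays inside the staircase region since $i+(c_{i+1}+1)=(i+1)+c_{i+1}\le n$. The step you flag as the remaining obstacle is the standard ladder-move lemma of Bergeron and Billey, and it can also be verified directly: the $2\times 2$ block with a lone cross in the southwest corner and the block with a lone cross in the northeast corner induce the same matching on the boundary edges of the block, so the global permutation is unchanged, and the cross count --- hence reducedness --- is preserved, while the two pipe dreams visibly differ. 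The trade-off between the two routes: yours is constructive (it exhibits the two distinct monomials witnessing non-monomiality) but imports the bottom-pipe-dream description via the code and the ladder-move machinery; the paper's needs only the two justification statements from \cite{ms} and is shorter, at the price of being a purely enumerative, non-constructive argument.
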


\begin{proof}
Proposition \ref{prop:geometric-monomial} showed that each dominant permutation has a monomial Schubert polynomial. Theorem \ref{thm:pipedream} states that a Schubert polynomial is monomial if and only if the corresponding permutation $w$ has exactly one reduced pipe dream.  A counting argument will show that there are no non-dominant $w$ with exactly one reduced pipe dream.

\cite[Proposition 16.24, Exercise 16.7]{ms} say that each permutation has a unique reduced pipe dream whose $+$'s are top-justified and a unique reduced pipe dream whose $+$'s are left-justified, respectively.  If $w$ is a permutation with a unique reduced pipe dream then its pipe dream must be both top-justified and left-justified.  This means the $+$'s of the pipe dream form a Young diagram.  Hence the Young diagrams count both permutations with unique reduced pipe dreams and dominant permutations.  Since dominant permutations are a subset of permutations with unique reduced pipe dreams, we conclude that dominant permutations are exactly those with a unique reduced pipe dream, equivalently with monomial Schubert polynomials.
\end{proof}

\section{Computing classes of Hessenberg varieties} \label{sec:computing}

In this section and the next, we prove formulas for the Hessenberg coefficients $m_{\lambda,u}^{(n)}$ in several cases.  The main tool we use is the following basic formula for expressing an arbitrary polynomial in terms of Schubert polynomials.  (See \cite[(4.14)]{mac}.)
\begin{lemma} \label{lemma:divdiffcoeffs}
For any homogeneous polynomial $P=P(x)\in\Z{[x_1,\ldots,x_n]}$ of degree $d$, we have
\begin{eqnarray}
P = \sum_{\ell(v)=d} (\partial_v P)\,\Sch_v(x),
\end{eqnarray}
where the sum is over length $d$ permutations in $S_\infty$.
\end{lemma}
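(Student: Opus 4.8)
The plan is to combine the basis theorem for single Schubert polynomials quoted above with an \emph{orthogonality} relation between the operators $\partial_v$ and the polynomials $\Sch_w$ of the same length. First I would record that $\Sch_w$ is homogeneous of degree $\ell(w)$: since $\Sch_{w_0}(x)=\prod_{i+j\le n}x_i$ is homogeneous of degree $\binom{n}{2}=\ell(w_0)$ and each $\partial_i$ lowers degree by exactly one (or kills its argument), the polynomial $\Sch_w=\partial_u\Sch_{w_0}$ with $\ell(u)=\ell(w_0)-\ell(w)$ is homogeneous of degree $\ell(w)$. Consequently, expanding $P$ in the Schubert basis of $\Z[x_1,\ldots,x_n]$ guaranteed by the quoted basis theorem, only those basis elements $\Sch_v$ with $\ell(v)=d$ can occur; hence $P=\sum_{\ell(v)=d} c_v\,\Sch_v$ for unique integers $c_v$, the sum running over the finitely many relevant permutations $v$ with $\Sch_v\in\Z[x_1,\ldots,x_n]$.

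The heart of the argument is the orthogonality relation
\[ \partial_u\Sch_v=\delta_{u,v}\qquad\text{whenever } \ell(u)=\ell(v). \]
I would deduce this from the sharper statement that $\partial_u\Sch_v=\Sch_{vu^{-1}}$ when $\ell(vu^{-1})=\ell(v)-\ell(u)$, and $\partial_u\Sch_v=0$ otherwise, proved by induction on $\ell(u)$. The base case $u=e$ is immediate. For the inductive step I would factor a reduced word $u=s_i\,u'$ with $\ell(u')=\ell(u)-1$, so that $\partial_u=\partial_i\circ\partial_{u'}$; apply the induction hypothesis to $\partial_{u'}\Sch_v$; and then invoke the inductive definition of $\Sch$, namely $\partial_i\Sch_w=\Sch_{w\,s_i}$ if $\ell(w\,s_i)<\ell(w)$ and $0$ otherwise. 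Here one uses $v(u')^{-1}s_i=vu^{-1}$ together with the standard bound $\ell(vu^{-1})\ge\ell(v)-\ell(u)$ to verify that the length condition at the intermediate stage matches the desired one. Specializing to $\ell(u)=\ell(v)$, the nonzero case forces $\ell(vu^{-1})=0$, i.e. $vu^{-1}=e$ and $u=v$, whereupon $\Sch_e=1$; this yields the displayed relation.

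With orthogonality in hand, applying $\partial_u$ to $P=\sum_{\ell(v)=d}c_v\,\Sch_v$ and pulling the integer constants through the $\Z$-linear operator $\partial_u$ gives $\partial_u P=\sum_{\ell(v)=d}c_v\,\partial_u\Sch_v=c_u$ for every $u$ of length $d$; thus $c_v=\partial_v P$ and $P=\sum_{\ell(v)=d}(\partial_v P)\,\Sch_v$. It remains to reconcile this finite sum over the basis permutations with the sum over \emph{all} length-$d$ permutations of $S_\infty$ in the statement. For a permutation $v$ with a descent $m>n$ (equivalently $\Sch_v\notin\Z[x_1,\ldots,x_n]$), I would choose a reduced word whose rightmost letter is $s_m$—possible precisely because $m$ is a descent of $v$—and note that $\partial_m P=(P-s_mP)/(x_m-x_{m+1})=0$, since $P$ involves neither $x_m$ nor $x_{m+1}$; hence $\partial_v P=0$ and these extra terms drop out harmlessly. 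The main obstacle is the orthogonality induction, where the bookkeeping of length conditions—matching $\ell(vu^{-1})$ against the intermediate $\ell(v(u')^{-1})$ so that the nonvanishing cases coincide exactly—must be carried out with care; once that is in place the rest of the argument is formal.
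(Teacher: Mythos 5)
Your proof is correct, but it cannot be "the same as the paper's," because the paper does not prove this lemma at all: it is quoted verbatim from Macdonald, with the citation \cite[(4.14)]{mac} standing in for a proof. What you have done is reconstruct the standard argument behind that citation, and it holds up. Your two ingredients are exactly the classical ones: the expansion property $\partial_u\Sch_v=\Sch_{vu^{-1}}$ when $\ell(vu^{-1})=\ell(v)-\ell(u)$ and $\partial_u\Sch_v=0$ otherwise, specialized at $\ell(u)=\ell(v)$ to the orthogonality $\partial_u\Sch_v=\delta_{u,v}$; and the basis theorem for $\Z[x_1,\ldots,x_n]$. Your induction does close up: writing $u=s_iu'$ so that $\partial_u=\partial_i\circ\partial_{u'}$, in the case $\ell(vu^{-1})=\ell(v)-\ell(u)$ the subword bound $\ell(v(u')^{-1})\ge\ell(v)-\ell(u')$ together with $\ell(v(u')^{-1})\le\ell(vu^{-1})+1$ pins down $\ell(v(u')^{-1})=\ell(v)-\ell(u')$, so both intermediate conditions hold, while in the opposite case either the inductive hypothesis kills the term or the final $\partial_i$ meets a length increase and gives $0$. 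One detail where your write-up is in fact more careful than the paper's own statement of the background: the paper records the basis as $\{\Sch_w \mid w(i)<w(i+1) \text{ for all } i\ge n\}$, but the correct dividing line---and the one your argument implicitly uses---allows descents up to and including position $n$, i.e.\ imposes the condition only for $i>n$; already for $n=1$ the expansion of $x_1$ requires $\Sch_{s_1}$, whose descent is at position $n$. Your disposal of the leftover length-$d$ permutations, namely those with a descent at some $m>n$, via a reduced word ending in $s_m$ and $\partial_vP=\partial_{vs_m}\partial_mP=0$ because $P$ involves neither $x_m$ nor $x_{m+1}$, is exactly the right bookkeeping, and it rightly fails for a descent at $m=n$, consistent with those permutations being genuine basis elements whose coefficients $\partial_vP$ need not vanish.
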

We are only interested in the coefficients for those $v$ which are in $S_n$, since $\Sch_v(x) \equiv 0$ in $H^*(\Fl(n))$ if $v \not\in S_n$.

The first case we consider is that of \emph{stable range} Hessenberg classes.  We say a Hessenberg variety in $\Fl(n)$ is in the stable range if the corresponding permutation $w(h)$ is in $S_k \subset S_n$, with $2k\leq n$.  Equivalently, the corresponding partition $\lambda$ is contained in the staircase $(k-1,k-2,\ldots,1,0)$.  We will see that $m_{\lambda,u}^{(n)}$ is always $1$ or $0$ in the stable range.

To compute these coefficients, we use Lemma \ref{lemma:divdiffcoeffs} together with the fact that double Schubert polynomials can be computed with respect to the $y$ variables as well as the $x$ variables.  When divided difference operators are computed with respect to $y$, we obtain the formula
\begin{eqnarray*}
\partial_i^y \Sch_w(x;y) = \left\{ \begin{array}{cl} \Sch_{s_iw}(x;y) & \text{if } \ell(w) >\ell(s_iw) ; \\ 0 &\text{if } \ell(w) < \ell(s_iw). \end{array} \right.
\end{eqnarray*}

\begin{lemma} \label{lemma:x-y-divdiff}
Suppose a polynomial $P(x;y)$ is symmetric in the variables $x_{n-i}$ and $x_{n-i+1}$.  Then
\begin{eqnarray}
-\partial^y_i P(x;y)|_{y=\tilde{x}} = \partial^x_{n-i}P(x;\tilde{x}).
\end{eqnarray}
Similarly, if $P$ is symmetric in $y_{n-i}$ and $y_{n-i+1}$, then
\begin{eqnarray}
\partial^x_i P(x;y)|_{y=\tilde{x}} = \partial^x_i P(x;\tilde{x}).
\end{eqnarray}
\end{lemma}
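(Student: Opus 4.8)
The plan is to prove both identities by writing each divided difference operator in terms of its associated variable transposition and then carefully tracking how the substitution $y=\tilde{x}$ (that is, $y_j = x_{n+1-j}$) interacts with that transposition. Recall that $\partial^x_i P = (P - s^x_i P)/(x_i - x_{i+1})$ and $\partial^y_i P = (P - s^y_i P)/(y_i - y_{i+1})$, where $s^x_i$ and $s^y_i$ denote the operators interchanging $x_i \leftrightarrow x_{i+1}$ and $y_i \leftrightarrow y_{i+1}$ respectively. Throughout I write $Q(x) = P(x;\tilde{x})$ for the specialization, and the central point is that $x$-variables occur in $Q$ both as explicit $x$-arguments and, through the substitution, inside the $y$-slots.

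For the first identity I would apply $\partial^x_{n-i}$ directly to $Q$. Since $\tilde{x}_j = x_{n+1-j}$, the variable $x_{n-i}$ occupies $x$-slot $n-i$ and also the $y$-slot $i+1$ (because $\tilde{x}_{i+1}=x_{n-i}$), while $x_{n-i+1}$ occupies $x$-slot $n-i+1$ and the $y$-slot $i$. Hence $s^x_{n-i}Q$ swaps these variables in both their $x$-occurrences and their $y$-occurrences at once. The key step is to invoke the hypothesis that $P$ is symmetric in the $x$-variables $x_{n-i},x_{n-i+1}$: this lets me undo the swap of the two $x$-slots for free, so that $s^x_{n-i}Q$ equals $P$ with only the $y$-slots $i$ and $i+1$ interchanged, namely $(s^y_iP)(x;\tilde{x})$. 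Combining this identification with the sign coming from $\tilde{x}_i - \tilde{x}_{i+1} = -(x_{n-i}-x_{n-i+1})$ gives $\partial^x_{n-i}Q = -\,\partial^y_i P|_{y=\tilde{x}}$, which is the claim.

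The second identity follows from the same bookkeeping with the roles of $x$ and $y$ interchanged. Applying $\partial^x_i$ to $Q$, the transposed variables $x_i,x_{i+1}$ now appear in the $x$-slots $i,i+1$ and, through the substitution, in the $y$-slots $n-i$ and $n-i+1$ (since $\tilde{x}_{n+1-i}=x_i$ and $\tilde{x}_{n-i}=x_{i+1}$). This time the hypothesis that $P$ is symmetric in $y_{n-i},y_{n-i+1}$ lets me undo the extra swap occurring in the $y$-slots, so that $s^x_iQ = (s^x_iP)(x;\tilde{x})$. Because the denominator $x_i - x_{i+1}$ is untouched by the substitution $y=\tilde{x}$, both $\partial^x_i P|_{y=\tilde{x}}$ and $\partial^x_i P(x;\tilde{x})$ reduce to $\bigl(P(x;\tilde{x}) - (s^x_iP)(x;\tilde{x})\bigr)/(x_i - x_{i+1})$, which proves the asserted equality.

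I expect the only genuine obstacle to be the index bookkeeping: one must correctly identify which transposed $x$-variable lands in which $y$-slot after the substitution $\tilde{x}_j = x_{n+1-j}$, and then verify that the stated symmetry hypothesis is exactly the one needed to cancel the unwanted transposition (symmetry in $x_{n-i},x_{n-i+1}$ in the first case, in $y_{n-i},y_{n-i+1}$ in the second). Once both occurrences of each transposed variable are pinned down, the rest is a routine manipulation of the divided difference quotients, with the sign in the first identity being the only subtlety.
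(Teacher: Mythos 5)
Your proof is correct and is essentially the same calculation as the paper's: both use the symmetry hypothesis to trade the unwanted transposition between the $x$-slots and the $y$-slots in the difference quotient, with the sign coming from $\tilde{x}_i - \tilde{x}_{i+1} = -(x_{n-i}-x_{n-i+1})$; you merely start from $\partial^x_{n-i}P(x;\tilde{x})$ while the paper starts from $\partial^y_i P(x;y)|_{y=\tilde{x}}$ and reads the identity in the other direction. Your index bookkeeping is accurate, and you additionally write out the second identity that the paper leaves as ``a similar calculation.''
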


\begin{proof}
Indeed,
\begin{eqnarray*}
\partial^y_i P(x;y)|_{y=\tilde{x}} &=& \left.\frac{P(x;y) - P(x;s_i(y))}{y_i - y_{i+1}}\right|_{y=\tilde{x}} \\
&=& \left.\frac{P(x;y) - P(s_{n-i}(x);s_i(y))}{y_i - y_{i+1}}\right|_{y=\tilde{x}} \\
&=&\frac{P(x;\ldots,x_{n-i+1},x_{n-i},\ldots) - P(s_{n-i}(x);\ldots,x_{n-i},x_{n-i+1},\ldots)}{x_{n-i+1} - x_{n-i}} \\
&=& -\partial^x_{n-i}P(x;\tilde{x}).
\end{eqnarray*}
A similar calculation proves the second statement.
\end{proof}

\begin{remark}
In fact, the same type of calculation proves the following: Let $w$ be a permutation, and suppose $P(x;y)$ is symmetric in $x_{w(i)}$ and $x_{w(i+1)}$.  
\begin{itemize}
\item If $w(i+1) = w(i)+1$, then $\partial^y_i P(x;y)|_{y = x^w} = \partial^x_{w(i)} P(x;x^w)$.
\item If $w(i+1) = w(i)-1$, then $\partial^y_i P(x;y)|_{y = x^w} = -\partial^x_{w(i+1)} P(x;x^w)$.
\end{itemize}
\end{remark}

If $k$ is the largest index such that $x_k$ appears in $P(x;y)$, then the lemma applies to $\partial^y_i$ for all $i<n-k$; if $l$ is the largest index such that $y_l$ appears in $P(x;y)$, then the lemma applies to $\partial^x_i$ for all $i<n-l$.  If $k+l<n$, then, the lemma gives an expression for $\partial^x_i P(x;\tilde{x})$ for all $i$.  Therefore we frequently assume a polynomial $P$ satisfies the following:
\renewcommand{\theenumi}{\fnsymbol{enumi}}
\begin{enumerate}
\item If $k$ is the largest index such that $x_k$ appears in $P(x_1,\ldots,x_n;y_1,\ldots,y_n)$, and $l$ is the largest index such that $y_l$ appears, then $k+l<n$. \label{independence}
\end{enumerate}
\renewcommand{\theenumi}{\roman{enumi}}

Lemma \ref{lemma:x-y-divdiff} together with Condition (\ref{independence}) immediately imply :
\begin{lemma} \label{lemma:divdiffcomp}
Suppose $\Sch_w(x_1,\ldots,x_n;y_1,\ldots,y_n)$ satisfies (\ref{independence}) and let $k$ be the largest index such that $x_k$ appears.  Then
\begin{eqnarray}
\partial^x_i\Sch_w(x;\tilde{x}) = \left\{\begin{array}{cl} 
\Sch_{w\,s_i}(x;\tilde{x}) &\text{if }i\leq k \text{ and }w\,s_i<w ; \\
\Sch_{s_{n-i}\,w}(x;\tilde{x}) &\text{if }i>k \text{ and }s_{n-i}\,w<w; \\
0 &\text{otherwise}. \end{array}\right.
\end{eqnarray}
\end{lemma}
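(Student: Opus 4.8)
The plan is to prove this by a direct case analysis on whether $i \leq k$, using Lemma~\ref{lemma:x-y-divdiff} to trade the divided difference $\partial^x_i$ applied to the \emph{already specialized} polynomial $\Sch_w(x;\tilde{x})$ for a divided difference applied \emph{before} setting $y = \tilde{x}$, where the two defining recursions for double Schubert polynomials are available. The role of Condition~(\ref{independence}) is precisely to guarantee that the symmetry hypotheses of Lemma~\ref{lemma:x-y-divdiff} are satisfied: writing $l$ for the largest index with $y_l$ appearing in $\Sch_w(x;y)$, Condition~(\ref{independence}) gives $k + l < n$, hence $l < n - k$.

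First I would treat the case $i \leq k$. Here $n - i \geq n - k > l$, so neither $y_{n-i}$ nor $y_{n-i+1}$ occurs in $\Sch_w(x;y)$, and $\Sch_w$ is (trivially) symmetric in these two variables. The second part of Lemma~\ref{lemma:x-y-divdiff} then lets me commute the operator past the specialization, $\partial^x_i \Sch_w(x;\tilde{x}) = \bigl(\partial^x_i \Sch_w(x;y)\bigr)\big|_{y=\tilde{x}}$, and applying the $x$-recursion $\partial^x_i \Sch_w(x;y) = \Sch_{w s_i}(x;y)$ when $w s_i < w$ (and $0$ otherwise), followed by specializing, yields $\Sch_{w s_i}(x;\tilde{x})$ when $w s_i < w$ and $0$ otherwise, as claimed.

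Next I would treat the case $i > k$. Now $x_i$ and $x_{i+1}$ have index exceeding $k$, so they do not appear in $\Sch_w(x;y)$ and $\Sch_w$ is symmetric in them. Applying the first part of Lemma~\ref{lemma:x-y-divdiff} with $y$-index $n-i$ converts $\partial^x_i$ into $\partial^y_{n-i}$ before specialization, and the $y$-recursion $\partial^y_{n-i}\Sch_w(x;y) = \Sch_{s_{n-i} w}(x;y)$ when $s_{n-i} w < w$ (and $0$ otherwise) finishes the computation, giving $\Sch_{s_{n-i} w}(x;\tilde{x})$ when $s_{n-i} w < w$ and $0$ otherwise. Together with the previous paragraph this exhausts every $i \in \{1,\dots,n-1\}$: the ranges $i \leq k$ and $i > k$ cover all indices, and the two vanishing cases account for the ``otherwise'' clause.

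The one point demanding care is the sign in the $i > k$ case: the first part of Lemma~\ref{lemma:x-y-divdiff} carries a minus sign, $-\partial^y_i P|_{y=\tilde{x}} = \partial^x_{n-i}P(x;\tilde{x})$, so I must check that this cancels against the sign implicit in the $y$-divided-difference recursion (consistently with the convention for $\partial^y$ fixed at the start of this section), leaving a coefficient of $+1$ rather than $-1$ on $\Sch_{s_{n-i}w}(x;\tilde{x})$. A quick check on a small example --- e.g.\ $w = s_1 \in S_3$, where $\Sch_{s_1}(x;\tilde{x}) = x_1 - x_3$ and $\partial^x_2(x_1 - x_3) = 1 = \Sch_{e}(x;\tilde{x})$ with $s_{n-i}w = s_1 s_1 = e$ --- confirms that the sign is indeed positive, so the bookkeeping works out as stated.
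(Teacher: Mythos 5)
Your proof is correct and is essentially the paper's own argument: the paper dispatches this lemma in one line as an immediate consequence of Lemma~\ref{lemma:x-y-divdiff} together with Condition~(\ref{independence}), via exactly your case split $i\le k$ (second part of that lemma plus the $x$-recursion) and $i>k$ (first part plus the $y$-recursion). Your sign check in the $i>k$ case is the right bookkeeping: under the denominator convention used in the proof of Lemma~\ref{lemma:x-y-divdiff}, the $y$-recursion itself carries a minus sign, which cancels the minus sign in that lemma's first part to give coefficient $+1$ on $\Sch_{s_{n-i}w}(x;\tilde{x})$, as your example confirms.
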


When $\Sch_{w\,s_i}(x;y)$ or $\Sch_{s_{n-i}\,w}(x;y)$ also satisfy (\ref{independence}), we can apply divided differences again using Lemma \ref{lemma:divdiffcomp}.  In the case where the Schubert polynomials $\Sch_u$ satisfy (\ref{independence}) for all permutations $u$ with $u\leq w$, this method calculates all coefficients $\partial^x_{w'}\Sch_w(x;\tilde{x})$, for $\ell(w')=\ell(w)$.

\begin{theorem} \label{thm:stableclasses}
Let $w=w(\lambda)$.  If $w\in S_k \subset S_n$ with $2k\leq n$, then
\begin{eqnarray}
{[\Hess_{\lambda}]} = \sum_{v^{-1}u=w} {[\Omega_{u\,w_0\,v\,w_0}]}
\end{eqnarray}
in $H^*(\Fl(n))$, where the sum is over permutations $u,v$ such that $v^{-1}u=w$ and $\ell(u)+\ell(v)=\ell(w)$.
\end{theorem}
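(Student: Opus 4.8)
The plan is to compute every Hessenberg coefficient $m^{(n)}_{\lambda,w'}$ directly by iterated divided differences, using the machinery assembled in this section. By Proposition \ref{prop:hessclassform} we have $[\Hess_\lambda] = \Sch_w(x;\tilde x)$, and by Lemma \ref{lemma:divdiffcoeffs} the coefficient of $\Sch_{w'}(x)$ in its Schubert expansion is the constant $m^{(n)}_{\lambda,w'} = \partial^x_{w'}\Sch_w(x;\tilde x)$, for $\ell(w')=\ell(w)$. The stable-range hypothesis $2k\leq n$ is exactly what keeps Lemma \ref{lemma:divdiffcomp} in force at every stage: since $w\in S_k$, the double Schubert polynomial $\Sch_w(x;y)=\prod_{(i,j)\in\lambda}(x_i-y_j)$ has all of its $x$- and $y$-indices at most $k-1$, so Condition (\ref{independence}) holds; and it persists after each divided difference, as these operators only lower the indices that occur.

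First I would describe the ``peeling'' produced by Lemma \ref{lemma:divdiffcomp}. Writing $k(z)$ for the position of the last descent of a permutation $z$ (equivalently, the largest index of an $x$-variable occurring in $\Sch_z(x;y)$), each $\partial^x_i$ applied to $\Sch_z(x;\tilde x)$ either replaces $z$ by $zs_i$ when $i\leq k(z)$ and $zs_i<z$ (a \emph{right peel}), replaces $z$ by $s_{n-i}z$ when $i>k(z)$ and $s_{n-i}z<z$ (a \emph{left peel}), or returns $0$. Starting from $z=w\in S_k$ and applying a reduced word for $w'$, every right peel multiplies on the right by some $s_i$ with $i\leq k-1$, and every left peel multiplies on the left by some $s_{n-i}$ with $n-i\leq k-1$; both keep $z$ inside $S_k$, so $k(z)\leq k-1$ throughout. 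This is where $2k\leq n$ enters: the ``low'' letters $i\leq k-1$ and the ``high'' letters $i\geq n-k+1$ occupy disjoint, non-adjacent ranges, and since $k(z)\leq k-1<n-k+1$, a high letter is always a left peel while a low letter can only be a right peel. The dichotomy is therefore unambiguous at every step.

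Next I would set up the bijection. A permutation $w'$ supported on the low and high ranges factors uniquely as $w'=u\,(w_0vw_0)$ with $u\in\langle s_1,\dots,s_{k-1}\rangle$ and $w_0vw_0\in\langle s_{n-k+1},\dots,s_{n-1}\rangle$, where $w_0 s_{n-i} w_0 = s_i$ identifies each high letter $s_i$ of $w_0vw_0$ with a low letter $s_{n-i}$ of $v$. Choosing the reduced word for $w'$ that first peels $u$ off the right of $w$ and then peels $v$ on the left, the right peels reduce $w$ to $v^{-1}$ (valid precisely when $\ell(w)=\ell(u)+\ell(v^{-1})$), and the high letters then left-multiply $v^{-1}$ by the letters of $v$, reducing it to the identity; the final value is $\Sch_e(x;\tilde x)=1$. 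Since $\partial^x_{w'}$ is independent of the word, this shows $m^{(n)}_{\lambda,w'}=1$ exactly when $w'=uw_0vw_0$ for a factorization $v^{-1}u=w$ with $\ell(u)+\ell(v)=\ell(w)$, and $0$ otherwise (a missing descent triggers the ``$0$'' branch, and any middle letter $k\leq i\leq n-k$ cannot peel a permutation lying in $S_k$). Because $u$ and $w_0vw_0$ have disjoint supports, $w'=uw_0vw_0$ recovers $u$ and $v$; distinct factorizations give distinct $w'$, so no cancellation occurs, and $\ell(uw_0vw_0)=\ell(u)+\ell(v)=\ell(w)$ places each term in the correct degree. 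This yields the stated sum.

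I expect the main obstacle to be the bookkeeping of the previous paragraph: verifying that one well-chosen reduced word for $w'$ realizes the clean ``peel $u$ on the right, then peel $v$ on the left'' pattern, and that length-additivity of $v^{-1}u=w$ is equivalent to every peel being length-decreasing, so that no intermediate ``$0$'' branch is struck. It is worth emphasizing that this is not a statement about raw polynomials: the naive identity $\Sch_u\cdot\Sch_{w_0vw_0}=\Sch_{uw_0vw_0}$ fails, since $\Sch_{w_0vw_0}$ generally involves low-index variables (for example $\Sch_{w_0 s_1 w_0}=\Sch_{s_{n-1}}=x_1+\cdots+x_{n-1}$), so the collapse of each product to a single Schubert class genuinely uses the relations defining $H^*(\Fl(n))$, which is precisely what the $\partial^x$-computation encodes. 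As a consistency check, expansion \eqref{eqn:exp} from the proof of Proposition \ref{prop:positive} already gives $[\Hess_\lambda]\equiv\sum_{v^{-1}u=w}\Sch_u(x)\,\Sch_{w_0vw_0}(x)$, so the content of the theorem is exactly that in the stable range each product reduces to the single class $[\Omega_{uw_0vw_0}]$, which the divided-difference computation establishes.
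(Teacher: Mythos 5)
Your proposal is correct and takes essentially the same approach as the paper: both express each coefficient as $\partial_{w'}\Sch_w(x;\tilde{x})$ via Lemma \ref{lemma:divdiffcoeffs} and evaluate it by iterating Lemma \ref{lemma:divdiffcomp}, with the stable-range hypothesis guaranteeing that Condition (\ref{independence}) persists (because all intermediate permutations stay in $S_k$) and that low/high letters act unambiguously as right/left peels. The paper merely packages the nonzero case more compactly---writing $\partial_{u\,w_0\,v\,w_0}=\partial_u\partial_{w_0\,v\,w_0}$ and converting $\partial_{w_0\,v\,w_0}$ into $(-1)^{\ell(v)}\partial^y_v$ applied before specialization---where you track the peels letter by letter, and your closing consistency check via (\ref{eqn:exp}) is precisely the alternative geometric proof the paper sketches after the theorem.
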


\begin{proof}
Let $w$ be contained in $S_k \subset S_n$, with $2k\leq n$.  Then $\Sch_w(x;y)$ satisfies (\ref{independence}), as do all $\Sch_u$ for $u\leq w$.  For any $u\leq w$, write $v^{-1}=w\,u^{-1}$, so $w=v^{-1}u$ and $\ell(u)+\ell(v)=\ell(w)$.  Both $u$ and $v$ lie in $S_k \subset S_n$ and so $w_0\,v\,w_0$ commutes with $u$.  Hence the corresponding divided difference operators also commute.  For each such $u$ and $v$, we have
\begin{eqnarray*}
\partial_{u\,w_0\,v\,w_0}\Sch_w(x;\tilde{x}) &=& \partial_u\partial_{w_0\,v\,w_0}\Sch_w(x;\tilde{x}) \\
&=& (-1)^{\ell(v)}\partial^x_u\partial^y_v\Sch_w(x;y)|_{y=\tilde{x}} \\
&=& 1,
\end{eqnarray*}
and $\partial_{w'}\Sch_w(x;\tilde{x}) = 0$ for all other $w'$, using Lemma \ref{lemma:divdiffcomp} repeatedly.  Lemma \ref{lemma:divdiffcoeffs} now proves the claim.
\end{proof}

Theorem \ref{thm:stableclasses} can also be proved geometrically, as follows.  Using Formula (\ref{eqn:exp}), we see that for any $w\in S_n$,
\begin{eqnarray}
\Sch_w(x;\tilde{x}) = \sum_{v^{-1}u=w} {[\Omega_u]}\cdot{[\Omega_{w_0\,v\,w_0}]}.
\end{eqnarray}
If $w$ is in the stable range, then ${[\Omega_u]}\cdot{[\Omega_{w_0\,v\,w_0}]} = {[\Omega_{u\,w_0\,v\,w_0}]}$.  Indeed, this follows from a more general fact:
\begin{proposition}
Let $a,b \in S_n$ be permutations such that $a$ fixes $k+1,\ldots,n$ and $b$ fixes $1,\ldots,k$, for some $k\leq n$.  Then ${[\Omega_a]}\cdot{[\Omega_b]} = {[\Omega_{a\,b}]}$.
\end{proposition}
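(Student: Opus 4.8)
The plan is to reduce the product to a nonnegative multiple of the single class $[\Omega_{ab}]$ by a Bruhat-order support argument, and then to compute that multiple as $1$ by a divided-difference calculation. First I would record the elementary combinatorics of $a$ and $b$. Since $a$ permutes $\{1,\dots,k\}$ and fixes the rest while $b$ fixes $\{1,\dots,k\}$ and permutes $\{k+1,\dots,n\}$, the two commute and $ab$ has one-line notation $[a(1),\dots,a(k),b(k+1),\dots,b(n)]$. There is no inversion $(i,j)$ with $i\le k<j$, because then $ab(i)=a(i)\le k<b(j)=ab(j)$; hence $\ell(ab)=\ell(a)+\ell(b)$, and in particular $a\le ab$ and $b\le ab$. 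Moreover every simple reflection in a reduced word for $a$ is an $s_i$ with $i\le k-1$, and every one for $b$ is an $s_j$ with $j\ge k+1$, so these reflections all commute and any interleaving of a reduced word for $a$ with one for $b$ is a reduced word for $ab$.

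Next I would show that $ab$ is the only Schubert class that can occur. Writing $[\Omega_a]\cdot[\Omega_b]=\sum_c c^c_{a,b}\,[\Omega_c]$, the coefficients are nonnegative (as in the Kleiman argument already used for the Hessenberg coefficients) and are supported on those $c$ with $c\ge a$ and $c\ge b$ in Bruhat order, with $\ell(c)=\ell(a)+\ell(b)$. Using the rank criterion for Bruhat order, namely $u\le w$ if and only if $r_u(q,p)\ge r_w(q,p)$ for all $q,p$, I would verify the pointwise identity $\min\{r_a(q,p),r_b(q,p)\}=r_{ab}(q,p)$. This is a short case analysis over the four regions determined by whether $q\le k$ or $q>k$ and whether $p\le k$ or $p>k$: in three of them both sides are forced to equal $r_a(q,p)$, $q$, or $p$, and in the region $q,p>k$ one finds $r_a=\min(q,p)$ while $r_b=r_{ab}$, so the minimum is $r_{ab}$. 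Granting this, $c\ge a$ and $c\ge b$ give $r_c\le\min\{r_a,r_b\}=r_{ab}$, i.e.\ $c\ge ab$; together with $\ell(c)=\ell(ab)$ this forces $c=ab$. Hence $[\Omega_a]\cdot[\Omega_b]=m\,[\Omega_{ab}]$ for a single nonnegative integer $m$.

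Finally I would compute $m$. By Lemma \ref{lemma:divdiffcoeffs} the coefficient is $m=\partial_{ab}(\Sch_a\Sch_b)$. Since $b$ has no descent at any position $i\le k$, the polynomial $\Sch_b$ is symmetric in $x_i$ and $x_{i+1}$ for every such $i$, so $\partial_i\Sch_b=0$ for $i\le k-1$; on the other hand $\Sch_a\in\Z[x_1,\dots,x_{k-1}]$ is independent of $x_k,x_{k+1},\dots$, so $\partial_j\Sch_a=0$ and $s_j\Sch_a=\Sch_a$ for $j\ge k+1$. Taking a reduced word for $ab$ obtained by concatenating reduced words for $a$ and $b$ and applying the Leibniz rule $\partial_i(PQ)=(\partial_iP)\,Q+(s_iP)\,(\partial_iQ)$, the operators coming from $b$ treat $\Sch_a$ as a constant, giving $\partial_b(\Sch_a\Sch_b)=\Sch_a\,\partial_b\Sch_b=\Sch_a$, after which the operators coming from $a$ annihilate $\Sch_b$ and yield $\partial_a\Sch_a=1$. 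Thus $m=1$ and $[\Omega_a]\cdot[\Omega_b]=[\Omega_{ab}]$.

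I expect the main obstacle to be the Bruhat-order step: correctly invoking the positivity and the support property $c^c_{a,b}\neq 0\Rightarrow c\ge a,\,b$ for Schubert structure constants, and carrying out the rank identity $\min\{r_a,r_b\}=r_{ab}$ cleanly across all four regions. By contrast, the divided-difference evaluation of the multiplicity is mechanical once one observes the symmetry of $\Sch_b$ in $x_1,\dots,x_{k+1}$ and the restricted variable support of $\Sch_a$.
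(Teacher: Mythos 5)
Your proof is correct, but it takes a genuinely different route from the paper's. The paper disposes of this proposition in one line of geometry: the Schubert cells $\Omega^o_a(b\cdot E_\bullet)$ and $\Omega^o_b(a\cdot E_\bullet)$, taken with respect to the translated flags, intersect transversally exactly in the cell $\Omega^o_{ab}(E_\bullet)$, and by the transversality results in Appendix B of Fulton's \emph{Young Tableaux} such an intersection of representing cycles computes the product of the classes. You instead argue algebraically in two steps: (1) the product equals $m\,[\Omega_{ab}]$ for a single integer $m$, via the support property of Schubert structure constants ($c^c_{a,b}\neq 0$ implies $c\ge a$ and $c\ge b$), Ehresmann's rank-matrix criterion for Bruhat order, and your identity $\min\{r_a(q,p),r_b(q,p)\}=r_{ab}(q,p)$, whose four-region case analysis I checked and is correct; (2) $m=\partial_{ab}(\Sch_a\Sch_b)=1$ by the Leibniz rule, using that $\Sch_a$ lies in $\Z[x_1,\ldots,x_{k-1}]$ while $\Sch_b$ is symmetric in $x_i,x_{i+1}$ for $i\le k$, which is also correct (one wording slip: after applying $\partial_b$ there is no $\Sch_b$ left to ``annihilate''; what remains is simply $\partial_a\Sch_a=1$). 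The trade-offs: the paper's argument is shorter, exhibits the intersection as an actual variety rather than just an identity of classes, and leans on degeneracy-locus/intersection-theory machinery; yours is longer but essentially self-contained modulo two standard facts that appear nowhere in the paper and should be cited or proved -- the Bruhat support property (deducible from Kleiman transversality together with the nonemptiness criterion for intersections of Schubert varieties in general position, or from equivariant localization) and the rank-matrix characterization of Bruhat order. A bonus of your approach: since your rank identity holds on any larger grid, running step (1) over $S_\infty$ yields the stronger polynomial identity $\Sch_a\Sch_b=\Sch_{ab}$ in $\Z[x_1,x_2,\ldots]$, of which the proposition is the image in $H^*(\Fl(n))$.
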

\begin{proof}
The Schubert cells $\Omega^o_a(b\cdot E_{\bullet})$ and $\Omega^o_b(a\cdot E_{\bullet})$ intersect transversally in the cell $\Omega^o_{a\,b}(E_{\bullet})$, and this suffices to compute the product of Schubert classes: see \cite[Appendix B]{yt}.
\end{proof}

\section{A combinatorial formula} \label{sec:formula}

Here we give a formula for $m_{\lambda,u}^{(n)}$ in the case $\lambda = (k)$ consists of a single row.  The corresponding Hessenberg functions have $h(1) = n-k$, and $h(i) = n$ for $i>1$; the corresponding permutation $w(\lambda)$ is a cycle.  Note that the specialized double Schubert polynomial is $\Sch_{w(\lambda)}(x;\tilde{x}) = (x_1 - x_n)(x_1 - x_{n-1})\cdots(x_1 - x_{n+1-k})$ in this case.  (The case where $\lambda$ is a single column can be obtained via the symmetry of Proposition \ref{prop:symmetry}.)

We state our combinatorial formula in terms of paths in the Bruhat graph.  We use $s_{ij}$ to denote the transposition exchanging $i$ and $j$.

\begin{definition}
A \define{restricted path} (with $p$ terms) from $u$ to $v$ is a sequence
\begin{eqnarray*}
u = u_1, u_2, u_3, \ldots, u_p = v,
\end{eqnarray*}
where
\begin{enumerate}
\item $\ell(u_{i+1}) = \ell(u_i)-1$, and \label{path-condition1}
\item $u_{i+1} = u_{i}\cdot s_{1j}$. \label{path-condition2}
\end{enumerate}
\end{definition}

\begin{remark}
Because of the special form of the transpositions in condition \eqref{path-condition2}, not every path in (strong or weak) Bruhat order is a restricted path.  In fact, restricted paths are precisely paths in ``$1$-Bruhat order,'' in the terminology of \cite{Soflagspieri}.
\end{remark}

\begin{theorem} \label{thm:rows}
Let $w\in S_n$ have length $k$.  The Hessenberg coefficient $m_{(k),w}^{(n)}$ is the number of restricted paths from $w$ to a permutation of the form $s_{n-p} s_{n-p-1}\cdots s_{n-k}$, where $p$ is the number of terms in the path.
\end{theorem}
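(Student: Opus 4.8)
The plan is to compute the coefficient $m_{(k),w}^{(n)} = \partial_{w}\,\Sch_{w(\lambda)}(x;\tilde{x})$ directly, using the explicit factored form $\Sch_{w(\lambda)}(x;\tilde{x}) = (x_1 - x_n)(x_1 - x_{n-1})\cdots(x_1 - x_{n+1-k})$ noted just before the theorem, and to apply divided difference operators one simple reflection at a time. By Lemma \ref{lemma:divdiffcoeffs}, $m_{(k),w}^{(n)} = \partial_w P$ where $P = \Sch_{w(\lambda)}(x;\tilde x)$. Since $w$ has length $k$, I would pick a reduced word $w = s_{i_1}\cdots s_{i_k}$ and track how $P$ transforms under successive $\partial_{i_j}$. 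The key is a recursion: each time a divided difference is applied to the current polynomial, I expect it either to annihilate a factor or to produce a signed sum of polynomials of the same shape, and the combinatorics of which reflections act nontrivially should match the two path conditions in the definition of a restricted path.

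First I would establish a one-step lemma describing the action of a single $\partial_i^x$ on a product of linear factors of the form $\prod_j (x_1 - x_{c_j})$ (more generally on the relevant intermediate polynomials). The reflection $s_i$ swaps $x_i$ and $x_{i+1}$; applying $\partial_i$ to $x_1 - x_c$ gives $0$ unless $\{i,i+1\}$ meets $\{1,c\}$, in which case it produces $\pm 1$ or swaps an index. This is exactly where condition \eqref{path-condition2} — that each step multiplies by a transposition $s_{1j}$ involving the index $1$ — should enter: the factors all share the distinguished variable $x_1$, so only reflections touching the first coordinate interact with the factors in a length-decreasing way. I would argue, by induction on the number of divided differences applied, that the intermediate results are indexed by partial restricted paths starting at $w$, and that the final scalar $\partial_w P$ counts precisely the restricted paths whose endpoint has the special form $s_{n-p}s_{n-p-1}\cdots s_{n-k}$.

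The cleanest route is probably to reverse the bookkeeping and build the paths by applying $\partial_i$ in the order dictated by a reduced word for $w$, or equivalently to use Lemma \ref{lemma:divdiffcomp} when applicable and handle the remaining reflections by hand. I would show that, after stripping off the factors, the only surviving monomial is a constant exactly when the sequence of reflections encodes a chain $w = u_1 \to u_2 \to \cdots \to u_p$ with $\ell(u_{i+1}) = \ell(u_i)-1$ and $u_{i+1} = u_i\, s_{1j}$, terminating at the distinguished permutation $s_{n-p}\cdots s_{n-k}$; each such chain contributes $+1$ (the signs should cancel against the reindexing in Lemma \ref{lemma:x-y-divdiff}, since the $y$-specialization introduces $(-1)^{\ell(v)}$), so the total is a count of restricted paths.

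The main obstacle will be the sign bookkeeping and the transition across the ``boundary'' index: Lemma \ref{lemma:divdiffcomp} cleanly converts $\partial_i^x$ into an action by $s_i$ on the right (for $i \le k$) or $s_{n-i}$ on the left (for $i > k$), but the factored polynomial $P$ need not satisfy Condition \eqref{independence} throughout the computation once $k$ grows, so I cannot blindly iterate that lemma. Instead I expect to need a direct, careful analysis of how $\partial_i$ acts on $\prod (x_1 - x_{n+1-j})$, verifying that the reflections producing nonzero results are exactly those of the form $s_{1j}$ on the path side, and that the terminal permutations $s_{n-p}\cdots s_{n-k}$ arise precisely as the images of the empty partial product. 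Matching the two path conditions to the two possible effects of a divided difference (kill a factor versus re-index a factor) and confirming that all surviving contributions are positive is the crux; once that correspondence is pinned down, the count of restricted paths follows immediately from Lemma \ref{lemma:divdiffcoeffs}.
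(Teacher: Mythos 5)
Your starting point coincides with the paper's: by Lemma \ref{lemma:divdiffcoeffs}, $m^{(n)}_{(k),w} = \partial_w\left(\prod_{i=n+1-k}^{n}(x_1-x_i)\right)$, and the task is to evaluate this divided difference. But the mechanism you propose --- factor $\partial_w$ along a reduced word $w = s_{i_1}\cdots s_{i_k}$ and track how each single $\partial_{i_j}$ acts on the product --- has a structural mismatch with the statement being proved. Restricted-path steps are right multiplications by transpositions $s_{1j}$, which are not simple reflections, so the letters of a reduced word for $w$ cannot be matched with path steps. Moreover, your one-step lemma is false as stated: a single $\partial_i$ applied to such a product does not merely ``annihilate a factor or re-index a factor.'' For instance $\partial_1\bigl((x_1-x_a)(x_1-x_b)\bigr) = x_1+x_2-x_a-x_b$, and in general the intermediate polynomials are sums of products with varying leading variable; the paper's own computation exhibits exactly this, since $\partial_{s_j\cdots s_1}\prod_{i=2}^n(x_1-x_i) = \sum_{i=1}^{j+1}\prod_{m=j+2}^n(x_i-x_m)$ (Lemma \ref{lemma:$n-1$}). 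So the invariant ``intermediate results are indexed by partial restricted paths, with polynomials of the same shape'' does not survive even one application of $\partial_1$.

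The idea you are missing is the Leibniz-type formula for a \emph{full} divided difference applied to $(\text{linear form})\cdot g$, namely Proposition \ref{Mac formula}: $\partial_w(fg) = w(f)\,\partial_w g + \sum(\alpha_i-\alpha_j)\,\partial_{ws_{ij}}g$. Taking $f = x_1-x_n$ and $g = \prod_{i=n+1-k}^{n-1}(x_1-x_i)$, the term $w(f)\partial_w g$ vanishes for degree reasons, and when $w\not\geq s_{n-1}$ the only surviving transpositions are the $s_{1j}$ with $\ell(ws_{1j})=\ell(w)-1$, each entering with coefficient $+1$ (Proposition \ref{one decomposition}); in particular your worry about sign cancellation is moot, as no signs ever appear. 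This peels off one polynomial factor per path step while keeping the operator intact as $\partial_{ws_{1j}}$, yielding the recursion $m^{(n)}_{(k),w} = \sum m^{(n-1)}_{(k-1),\,ws_{1j}}$, which is visibly the same recursion satisfied by counts of restricted paths. Your sketch also omits the base of the recursion, the case $w\geq s_{n-1}$, which needs separate treatment (Lemma \ref{no $s_n$}): there the coefficient is $1$, $0$, or --- when $k=n-1$ and $w=s_{n-1}\cdots s_1$ --- equal to $n$, matching $n$ distinct restricted paths. None of this comes out of a letter-by-letter analysis of a reduced word, so as it stands the proposal has a genuine gap at its central step.
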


For example, if $w = s_2 s_1 = {[3,1,2,4]}$, the two paths giving $m_{(2),w}^{(4)} = 2$ are
\begin{eqnarray*}
\underline{3}\;1\;\underline{2}\;4 \to \underline{2}\;\underline{1}\;3\;4 \to 1\;2\;3\;4, \quad \text{i.e., } s_2 s_1 \to s_1 \to e,
\end{eqnarray*}
and
\begin{eqnarray*}
\underline{3}\;\underline{1}\;2\;4 \to 1\;3\;2\;4, \quad \text{i.e., } s_2 s_1 \to s_2.
\end{eqnarray*}
(The underlined numbers are the ones to be swapped.)  Note that the $1$-term path consisting of $w$ itself is not of the form prescribed in the theorem.

\medskip
The target of the paths described in the theorem is the cycle
\begin{eqnarray*}
\pi^{(p)} = {[1,2,\ldots,n-k-1,n-p+1,n-k,\ldots,n-p,n-p+2,\ldots,n]}.
\end{eqnarray*}
In fact, restricted paths with this target must satisfy an additional constraint, which helps with computation:
\begin{proposition}
If $\ell(w)=k$, and $w = u_1, \ldots, u_p = \pi^{(p)}$ is a restricted path, then at the $i$th step we have 
\renewcommand{\theenumi}{ii$'$}
\begin{enumerate}
\item $u_{i+1} = u_i\cdot s_{1j}$ with $2\leq j\leq n+1-i$. \label{n-res}
\end{enumerate}
\end{proposition}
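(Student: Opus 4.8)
The plan is to analyze what constraint the target $\pi^{(p)}$ forces on a restricted path ending at it. First I would understand the target explicitly: by the Pieri permutation formula, $\pi^{(p)} = s_{n-p}s_{n-p-1}\cdots s_{n-k}$ is the cycle sending $n-k-1 \mapsto$ (position adjustment) so that in one-line notation the value $n-p+1$ appears in position $n-k$, and in particular $\ell(\pi^{(p)}) = p - (n-k) $, which must equal $k - (p-1)$ reversed since the path drops length by one at each of the $p-1$ steps starting from $\ell(u_1) = k$. This gives the relation $\ell(\pi^{(p)}) = k - (p-1)$, so $p \leq k+1$, and more importantly it pins down where in the path we are. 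The key observation is that condition \eqref{path-condition2} requires each step to multiply on the right by a transposition $s_{1j} = (1\,j)$ that swaps the values in positions $1$ and $j$, and condition \eqref{path-condition1} requires this to \emph{decrease} length; I want to show this forces $j \leq n+1-i$ at the $i$th step.

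The main step is a descending induction (or equivalently a reverse induction from the target). Working backwards from $u_p = \pi^{(p)}$, I would track the value sitting in position $1$ as one moves back up the path. Right-multiplication by $s_{1j}$ swaps the entries of $u_i$ in positions $1$ and $j$; for length to drop by one going forward, one needs $u_i(1) > u_i(j)$ together with no intermediate value of $u_i$ strictly between $u_i(j)$ and $u_i(1)$ lying in positions between $1$ and $j$ (this is the standard covering-relation criterion in Bruhat order for a transposition of two positions). The crucial numerical input is that the target $\pi^{(p)}$ has its largest displaced value equal to $n-p+1$ in position $n-k$, and all positions to the right of a certain point carry their ``identity'' values $n-p+2, \ldots, n$; reading the path backwards, each step can only involve positions whose values have not yet been pushed to their final large slots. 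I expect to show by induction on $i$ (from $i=p$ downward) that at step $i$ the value $u_i(1)$ is at most $n+1-i$, which then bounds the position $j$ with which position $1$ can be swapped, yielding $j \leq n+1-i$.

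The hard part will be making the backward bookkeeping precise: I need to argue that the ``large'' values $n, n-1, \ldots, n-p+2$ occupy their final positions in $\pi^{(p)}$ and that each restricted step, read backwards, can only move a value \emph{out of} position $1$ into a higher position without disturbing those already-settled large values. Concretely, since every forward step is a right multiplication by $s_{1j}$, position $1$ is involved in \emph{every} step, so the entry in position $1$ changes at each step; I must verify that this entry strictly increases as one moves backward along the path (equivalently, $u_{i}(1) < u_{i-1}(1)$ is the wrong direction—rather, $u_{i+1}(1) < u_i(1)$ fails in general, so the correct invariant is subtler and must be extracted from the length-decreasing condition). The cleanest route is probably to observe that because only position $1$ transpositions are used, the sequence of one-line notations differs only in that the first entry and one other entry are swapped each time, so the multiset of non-first entries evolves in a controlled way; combining this with the explicit form of $\pi^{(p)}$ and the requirement $\ell(u_i) = k-(i-1)$ should force the inequality $2 \leq j \leq n+1-i$ at each step. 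I would finish by checking the two small examples with $w = s_2 s_1$ in $S_4$ given just above the proposition to confirm the bound $j \leq n+1-i$ holds at each listed step.
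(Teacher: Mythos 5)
Your overall strategy (backward induction from the explicit target $\pi^{(p)}$) is the same as the paper's, but the induction hypothesis you propose to carry is too weak, and this is a genuine gap. You want to show $u_i(1) \leq n+1-i$ and then conclude that position $1$ can only be swapped with a position $j \leq n+1-i$. That inference is false: a length-decreasing step $u_{i+1} = u_i\, s_{1j}$ only requires $u_i(j) < u_i(1)$ (plus the covering condition), so $j$ can be large even when $u_i(1)$ is small, provided the value in position $j$ is smaller still. For instance $u = [2,3,4,1] \in S_4$ has $u(1) = 2$, yet $u\, s_{14} = [1,3,4,2]$ drops the length by exactly one with $j = 4 = n$. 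So a bound on the \emph{value} $u_i(1)$ never bounds the \emph{position} $j$. You flag this problem yourself (``the correct invariant is subtler and must be extracted from the length-decreasing condition'') but never identify that invariant, and your closing paragraph --- that the multiset of non-first entries ``evolves in a controlled way'' and ``should force the inequality'' --- is a restatement of the goal, not an argument.

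The invariant that works, and that the paper uses, is the stronger statement that $u_i$ fixes every point greater than $n+1-i$, i.e., $u_i \in S_{n+1-i} \subset S_n$. This holds at the end of the path since $\pi^{(p)} \in S_{n+1-p}$, and it propagates backward: if $u_{i+1} \in S_{n-i}$ but $u_i = u_{i+1}\, s_{1j}$ with $j > n+1-i$, then $u_{i+1}$ fixes both $n+1-i$ and $j$, so $u_i(1) = j$ while $u_i(n+1-i) = n+1-i$ and $u_i(j) = u_{i+1}(1) < j$; hence $u_i$ has at least two more inversions than $u_{i+1}$, contradicting $\ell(u_i) = \ell(u_{i+1}) + 1$. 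With this invariant in hand the proposition is immediate: for $j > n+1-i$ one has $u_i(j) = j > u_i(1)$, so right multiplication by $s_{1j}$ increases length and cannot occur as a step. Your proof could be repaired by replacing ``track the value in position $1$'' with ``show all entries beyond position $n+1-i$ are fixed,'' but as written the key step fails.
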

\renewcommand{\theenumi}{\roman{enumi}}

\begin{proof}
In fact, for such a path, $u_i \in S_{n+1-i} \subset S_n$.  Clearly this is true for $u_p = \pi^{(p)}$.  Working backwards, suppose $u_{i+1} \in S_{n-i}$ but $u_i \not\in S_{n+1-i}$.  Since $u_i = u_{i+1}\cdot s_{1j}$, this happens if and only if $j>n+1-i$.  But then $u_{i+1}$ fixes $n+1-i$ and $j$.   Since $u_i$ has the inversions $j>u_i(1)$ and $j>n+1-i$, we conclude $\ell(u_i) \geq \ell(u_{i+1}) + 2$, a contradiction.
\end{proof}

\noindent
Call a restricted path satisfying (\ref{n-res}) an \define{$n$-restricted path}.

\medskip
Before proving the theorem, we need some lemmata.  The first is a standard fact from the theory of normal forms in Coxeter groups (cf.\ \cite[Corollary 2.4.6]{bb}).

\begin{lemma} \label{lemma:factorization}
Any permutation $w\in S_n$ can be factored uniquely
as a product of the form 
\begin{eqnarray*}
w = \prod_{i=2}^{n}(s_{i-1}s_{i-2}s_{i-3} \cdots s_{i-j_i})
\end{eqnarray*}
for some $0\leq j_i < i$, where the $i^{th}$ factor is $e$ if $j_i = 0$.
\end{lemma}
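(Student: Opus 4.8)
The plan is to argue by induction on $n$, building the factorization one factor at a time from the decomposition of $S_n$ into right cosets of the parabolic subgroup $S_{n-1} = \langle s_1,\ldots,s_{n-2}\rangle$, which is exactly the stabilizer of the letter $n$. The base case $n=1$ is the empty product. For the inductive step, the crux is to identify the rightmost factor $c_{j_n} = s_{n-1}s_{n-2}\cdots s_{n-j_n}$ (with $0\le j_n < n$) as a canonical right coset representative; once $w = w'\cdot c_{j_n}$ is established with $w'\in S_{n-1}$, the inductive hypothesis factors $w'$ uniquely as $\prod_{i=2}^{n-1} s_{i-1}\cdots s_{i-j_i}$, and appending the last factor yields the desired form.

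First I would record the complete-representatives claim: the $n$ elements $c_j = s_{n-1}s_{n-2}\cdots s_{n-j}$ for $j=0,1,\ldots,n-1$ (with $c_0 = e$) form a complete set of representatives for the right cosets $S_{n-1}\backslash S_n$. Since $S_{n-1}$ fixes the letter $n$, the value $g^{-1}(n)$ is constant along each right coset $S_{n-1}\,g$, so $g\mapsto g^{-1}(n)$ is a right-coset invariant taking values in $\{1,\ldots,n\}$; as $[S_n:S_{n-1}]=n$, this invariant separates the cosets completely, and it suffices to check that the $c_j$ realize all $n$ values. A direct trace of the word shows $c_j$ sends the input $n-j$ to the value $n$: only the top letter $s_{n-1}$ can produce the value $n$, and reading the word from the right, each successive reflection carries $n-j$ up by one, so $c_j^{-1}(n)=n-j$. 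These values are pairwise distinct as $j$ ranges over $0,\ldots,n-1$, proving the claim.

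With the coset decomposition $S_n = \bigsqcup_{j=0}^{n-1} S_{n-1}\,c_j$ in hand, each $w\in S_n$ is written uniquely as $w=w'\,c_{j_n}$ with $w'\in S_{n-1}$ and $0\le j_n<n$, and the induction closes. The main obstacle is this coset-representative identification, and specifically the computation $c_j^{-1}(n)=n-j$; once that distinctness is in place the rest is bookkeeping, including the verification that the index bound $j_i<i$ is precisely what the constraint $i-j_i\ge 1$ on the bottom letter $s_{i-j_i}$ demands. I expect no separate difficulty with uniqueness, since the coset decomposition delivers existence and uniqueness of the top factor simultaneously, rather than requiring an independent cardinality count of $\prod_{i=2}^n i = n!$.
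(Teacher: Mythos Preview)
Your argument is correct. The paper does not actually supply a proof of this lemma: it simply records the statement as ``a standard fact from the theory of normal forms in Coxeter groups'' and points to \cite[Corollary 2.4.6]{bb}. Your inductive coset-representative argument is precisely the standard proof underlying that reference, specialized to type $A$: one peels off the last factor using the right-coset decomposition $S_n = \bigsqcup_{j=0}^{n-1} S_{n-1}\,c_j$ with $c_j = s_{n-1}\cdots s_{n-j}$, and the verification $c_j^{-1}(n)=n-j$ is exactly what identifies these as a complete transversal. So there is nothing to compare beyond noting that you have written out what the paper cites.
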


We also use the following from \cite[Proposition 2.13, page 27]{mac}.

\begin{proposition} \label{Mac formula}
If $f = \sum \alpha_i x_i$ for coefficients $\alpha_i \in \mathbb{C}$, then
\begin{eqnarray*}
\partial_w(fg) = w(f) \partial_wg + \sum(\alpha_i - \alpha_j) \partial_{ws_{ij}}g,
\end{eqnarray*}
summed over all pairs $i<j$ such that $\ell(ws_{ij}) = \ell(w)-1$, where
$s_{ij}$ is the transposition
that interchanges $i$ and $j$.
\end{proposition}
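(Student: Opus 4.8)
The plan is to prove the formula by induction on $\ell(w)$, reducing everything to a single-operator Leibniz rule. First I would establish, directly from the definition $\partial_i P = (P - s_i P)/(x_i - x_{i+1})$, the identity
\[ \partial_i(fg) = (\partial_i f)\, g + (s_i f)(\partial_i g), \]
valid for any $g$ and any polynomial $f$; this comes from writing $fg - (s_if)(s_ig) = (f - s_if)g + (s_if)(g - s_ig)$ and dividing by $x_i - x_{i+1}$. When $f = \sum_k \alpha_k x_k$ is linear, $\partial_i f = \alpha_i - \alpha_{i+1}$ is a scalar and $s_i f$ is again linear, so the rule becomes $\partial_i(fg) = (\alpha_i - \alpha_{i+1})\,g + (s_if)(\partial_i g)$.

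For the induction the base case $w = e$ is immediate, since the indicated sum is empty and $e(f) = f$. For the inductive step I would fix a reduced word and write $w = s_i v$ with $\ell(w) = \ell(v)+1$, so that $\partial_w = \partial_i \partial_v$. Applying the inductive hypothesis to $\partial_v(fg)$, then applying $\partial_i$ --- using the single-operator rule on the leading term $v(f)\,\partial_v g$ and pulling the scalar coefficients through $\partial_i$ on the remaining terms --- produces the desired structure. The leading terms telescope: since the variable-permutation action is a genuine group action, $s_i\bigl(v(f)\bigr) = (s_i v)(f) = w(f)$, giving the main term $w(f)\,\partial_w g$. What remains is
\[ (\alpha_{v^{-1}(i)} - \alpha_{v^{-1}(i+1)})\,\partial_v g + \sum_{\ell(vt)=\ell(v)-1} (\alpha_a - \alpha_b)\,\partial_i\partial_{vt}\,g, \]
where $t = s_{ab}$ ranges over the down-covers of $v$, and I must identify this with $\sum_{\ell(wt)=\ell(w)-1}(\alpha_a-\alpha_b)\,\partial_{wt}\,g$.

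The combinatorial heart --- and the step I expect to be the main obstacle --- is to match these two expressions via the structure of Bruhat covers of $w = s_i v$. In the remaining sum, $\partial_i\partial_{vt} = \partial_{s_ivt} = \partial_{wt}$ exactly when $\ell(s_i(vt)) = \ell(vt)+1$, i.e.\ when $wt$ covers $vt$ and hence $wt$ is a down-cover of $w$, while the terms with $\ell(s_i(vt)) = \ell(vt)-1$ vanish automatically because $\partial_i^2 = 0$; this accounts for every down-cover of $w$ whose transposition is also a down-cover of $v$. The first leftover term accounts for the distinguished down-cover $t_0 = v^{-1}s_iv = s_{v^{-1}(i),\,v^{-1}(i+1)}$, for which $wt_0 = v$ and, using $v^{-1}(i) < v^{-1}(i+1)$ (which holds since $\ell(s_iv) > \ell(v)$), the coefficient $\alpha_{v^{-1}(i)} - \alpha_{v^{-1}(i+1)}$ is precisely $\alpha_a - \alpha_b$. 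The delicate point is to show that there are no further down-covers of $w$: one must rule out any $t = s_{ab}$ that is an \emph{up}-cover of $v$ (so that $t$ does not appear in the sum above) yet with $\ell(wt) = \ell(w)-1$ and $wt \neq v$. I would handle this with the one-line-notation cover criterion, checking cases according to whether the swapped values $v(a), v(b)$ meet $\{i,i+1\}$: the standing hypothesis $\ell(s_iv) > \ell(v)$, combined with the ``no intermediate value'' condition characterizing a Bruhat cover, forces $s_i$ to fail to be a left descent of $vs_{ab}$ in every case except $t = t_0$. Once this enumeration of down-covers is verified, the two expressions coincide term by term and the induction closes.
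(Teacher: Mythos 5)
Your argument is correct, and it is worth noting at the outset that the paper contains no proof of this proposition to compare against: it is quoted directly from Macdonald's \emph{Notes on Schubert Polynomials} (Proposition 2.13 there), and the standard proof, like yours, is an induction on $\ell(w)$ driven by the twisted Leibniz rule $\partial_i(fg)=(\partial_i f)g+(s_if)(\partial_i g)$. So your proposal essentially reconstructs the omitted argument rather than diverging from one. The single point you flag as the main obstacle does close exactly along the lines you sketch, and here is the verification. With $w=s_iv$, $\ell(w)=\ell(v)+1$, suppose $t=s_{ab}$ with $a<b$ satisfies $\ell(vt)=\ell(v)+1$ and $\ell(wt)=\ell(w)-1$; the first condition is equivalent to $v(a)<v(b)$ with no position $c$ strictly between $a$ and $b$ having $v(a)<v(c)<v(b)$, the second says $(vt)^{-1}(i)>(vt)^{-1}(i+1)$, and the standing hypothesis $\ell(s_iv)>\ell(v)$ says $v^{-1}(i)<v^{-1}(i+1)$. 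If $\{v(a),v(b)\}$ misses $\{i,i+1\}$, the positions of the values $i$ and $i+1$ are the same in $v$ and $vt$, contradicting the descent condition. If $v(a)=i$ and $v(b)\neq i+1$, then $v(b)>i+1$, and the position $c=v^{-1}(i+1)$ satisfies $a<c$ (ascent of $v$) and $c<b$ (descent of $vt$, since $(vt)^{-1}(i)=b$), so $v(c)=i+1$ is an intermediate value, a contradiction; the case $v(b)=i+1$, $v(a)\neq i$ is symmetric, with $v(c)=i$ the intermediate value. If $v(a)=i+1$, the ascent of $v$ forces $v^{-1}(i)<a$ while the descent of $vt$ forces $v^{-1}(i)>b$, absurd since $a<b$; likewise $v(b)=i$ would force $v^{-1}(i+1)>b$ and $v^{-1}(i+1)<a$ simultaneously. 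Hence $t=t_0=s_{v^{-1}(i),\,v^{-1}(i+1)}$ is the unique up-cover of $v$ that is a down-cover of $w$, which is precisely what your term-by-term matching needs. One bookkeeping remark that makes your dichotomy airtight: since $vt=s_i(wt)$, the hypothesis $\ell(wt)=\ell(w)-1$ already forces $\ell(vt)=\ell(v)\pm 1$, so every transposition contributing to the target sum is honestly either a down-cover or an up-cover of $v$, and your enumeration is exhaustive.
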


\begin{proposition} \label{one decomposition}
If $g = \prod_{i=n+1-k}^{n-1}(x_1-x_i)$, $w$ has length $k$, and $w \not > s_{n-1}$ then
\[\partial_w((x_1-x_n)g) =\sum \partial_{ws_{1j}}g\]
where the sum is over all transpositions $s_{1j}$
 such that $\ell(ws_{1j})=\ell(w)-1$.
\end{proposition}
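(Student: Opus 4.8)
The plan is to expand $\partial_w\big((x_1-x_n)g\big)$ using the Leibniz-type identity of Proposition~\ref{Mac formula}, with $f=x_1-x_n$ and $g=\prod_{i=n+1-k}^{n-1}(x_1-x_i)$. Writing $f=\sum_m\alpha_m x_m$ we have $\alpha_1=1$, $\alpha_n=-1$, and $\alpha_m=0$ otherwise, so
\[
\partial_w\big((x_1-x_n)g\big)=w(f)\,\partial_w g+\sum_{\substack{i<j\\ \ell(ws_{ij})=\ell(w)-1}}(\alpha_i-\alpha_j)\,\partial_{ws_{ij}}g.
\]
First I would kill the leading term by a degree count: $g$ is a product of $k-1$ linear factors, so $\deg g=k-1$, whereas $\partial_w$ lowers degree by $\ell(w)=k$; thus $\partial_w g$ has negative degree and vanishes, eliminating $w(f)\,\partial_w g$ without any need to compute $w(f)$. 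The same count shows every surviving $\partial_{ws_{ij}}g$ has degree $(k-1)-(k-1)=0$ and is therefore a constant.

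Next I would identify the contributing pairs. Since $\alpha_i-\alpha_j\ne0$ only when $\{i,j\}$ meets $\{1,n\}$, the surviving terms are the $s_{1j}$ with $2\le j\le n-1$ (coefficient $1$), the $s_{in}$ with $2\le i\le n-1$ (coefficient $1$), and the single $s_{1n}$ (coefficient $\alpha_1-\alpha_n=2$). Comparing with the asserted right-hand side $\sum_{\ell(ws_{1j})=\ell(w)-1}\partial_{ws_{1j}}g$, whose index $j$ runs over $2\le j\le n$, the terms $s_{1j}$ with $j\le n-1$ agree immediately, and one of the two copies of the $s_{1n}$-term is absorbed there. Hence the claim is equivalent to the vanishing
\[
\sum_{\substack{1\le i\le n-1\\ \ell(ws_{in})=\ell(w)-1}}\partial_{ws_{in}}g=0,
\]
where the leftover copy of the $s_{1n}$-term has been folded in as the $i=1$ summand.

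The core of the argument is that this sum is empty, and this is where the hypothesis is used. Via the support characterization of Bruhat order, $s_{n-1}\le w$ if and only if $w(n)\ne n$, so I will use $w\not> s_{n-1}$ in the form ``$w$ fixes $n$.'' Once $w(n)=n$, every index $i<n$ has $w(i)<n=w(n)$, so right multiplication by $s_{in}$ introduces (rather than removes) the inversion in positions $(i,n)$ and $\ell(ws_{in})>\ell(w)$; thus no $i<n$ satisfies $\ell(ws_{in})=\ell(w)-1$ and the displayed sum is vacuous. I expect the two delicate points to be the bookkeeping of the coefficient $2$ on $s_{1n}$---one unit is a genuine $s_{1j}$-term ($j=n$) while the other must cancel as part of the $s_{in}$-vanishing---and the faithful translation of the hypothesis $w\not> s_{n-1}$ into the positional statement $w(n)=n$.
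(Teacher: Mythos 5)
Your proof is correct and takes essentially the same route as the paper's: expand via Proposition \ref{Mac formula}, kill the leading term $w(f)\,\partial_w g$ by the degree count $\deg g = k-1 < \ell(w)$, keep track of the coefficient $2$ on $s_{1n}$ by splitting it between the two sums, and show that no $s_{in}$-term can satisfy $\ell(ws_{in})=\ell(w)-1$. The only difference is cosmetic, in that last step: the paper argues abstractly in Bruhat order (since $s_{in}\geq s_{n-1}$, the hypothesis forces $ws_{in}>w$), whereas you translate the hypothesis into the positional statement $w(n)=n$ and observe that swapping positions $i$ and $n$ then creates an inversion, so $\ell(ws_{in})>\ell(w)$ --- an equivalent and equally valid verification.
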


\begin{proof}
Proposition \ref{Mac formula} reduces in our case to
\[\partial_w\left( (x_1-x_n) g \right) = w(x_1-x_n) \partial_wg + \sum \partial_{ws_{1j}}g
+ \sum \partial_{ws_{in}}g,\]
where the sums are taken over transpositions $s_{ij}$ with $\ell(ws_{ij}) = \ell(w)-1$.
Since $\ell(w) = k$, we have $\partial_w g = 0$ by degree considerations.  Note that $s_{in}>s_{n-1}$ since $s_{in}=s_is_{i+1}\cdots s_{n-2}s_{n-1}s_{n-2} \cdots s_{i+1}s_i$.  This means $ws_{in}>s_{n-1}$ and so $w \not > ws_{in}$ in the Bruhat order.  Multiplication by a transposition satisfies either $ws_{in}>w$ or $w>ws_{in}$ \cite[\S1]{mac}.  We conclude that $\ell(ws_{in}) > \ell(w)$ for all $s_{in}$, and the claim follows.  
\end{proof}

\begin{lemma} \label{no $s_n$}
Let $w$ be a permutation with length $k$.
If $w \geq s_{n-1}$ then one of the following holds:
\begin{itemize}
\item $k<n-1$, $w = s_{n-1} s_{n-2} \cdots s_{n-k}$, and $\partial_w \left(\prod_{i=n+1-k}^{n} (x_1-x_i)\right) 
=1$;
\item $k=n-1$, $w =  s_{n-1} s_{n-2} \cdots s_{1}$, and $\partial_w \left(\prod_{i=2}^{n} (x_1-x_i)\right) 
=n$; or
\item $\partial_w \left(\prod_{i=n+1-k}^{n} (x_1-x_i)\right) =0$.
\end{itemize}
\end{lemma}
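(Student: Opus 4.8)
The plan is to analyze the condition $w \geq s_{n-1}$ and split into the three listed cases by tracking what it means combinatorially for a length-$k$ permutation to dominate $s_{n-1}$. First I would recall that $s_{n-1} = [1,2,\ldots,n-2,n,n-1]$ is the transposition exchanging $n-1$ and $n$, and that in Bruhat order $w \geq s_{n-1}$ is equivalent to the rank condition $r_w(q,p) \leq r_{s_{n-1}}(q,p)$ for all $q,p$; concretely this forces $w$ to have an inversion ``using'' the value $n$ in a specific way, namely $w^{-1}(n) < n$. The factorization of Lemma \ref{lemma:factorization} is the natural bookkeeping device here: writing $w$ in its normal form, the condition $w \geq s_{n-1}$ constrains the final factor $(s_{n-1}s_{n-2}\cdots s_{n-j_n})$ to be nontrivial, i.e.\ $j_n \geq 1$.

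The computational heart is the divided-difference evaluation. I would compute $\partial_w\left(\prod_{i=n+1-k}^{n}(x_1-x_i)\right)$ by repeatedly applying Proposition \ref{Mac formula}, peeling off factors of the form $(x_1 - x_i)$ one at a time, exactly as in Proposition \ref{one decomposition} but now in the regime where $w$ \emph{does} dominate $s_{n-1}$. In the first bullet, when $k < n-1$ and $w = s_{n-1}s_{n-2}\cdots s_{n-k}$, the permutation has length exactly $k$ and is itself a single descending chain of simple transpositions; here I expect the product $\prod_{i=n+1-k}^{n}(x_1-x_i)$ to be (up to the relevant divided differences) precisely the Schubert polynomial whose top divided difference returns $1$, so the evaluation $\partial_w(\cdots) = 1$ should follow from the defining inductive property $\partial_i \Sch_w = \Sch_{ws_i}$ together with $\partial_e = \mathrm{id}$. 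For the third bullet, the claim is that if $w \geq s_{n-1}$ but $w$ is \emph{not} the specific chain $s_{n-1}\cdots s_{n-k}$, then applying Proposition \ref{Mac formula} produces only terms of the form $\partial_{ws_{in}}g$ with $i \neq 1$ or terms killed by degree, all of which vanish by the same $\ell(ws_{in}) > \ell(w)$ argument used in Proposition \ref{one decomposition} combined with the fact that no admissible $s_{1j}$-move lands on a permutation of the correct form; the output is therefore $0$.

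The genuinely special case is the second bullet, $k = n-1$ and $w = s_{n-1}s_{n-2}\cdots s_1 = w_0\,\ldots$, where the answer is $n$ rather than $1$. Here the product is $g = \prod_{i=2}^{n}(x_1 - x_i)$, which is a symmetric-up-to-sign expression in all the variables relative to $x_1$, and the coefficient jumps because the factor $(x_1 - x_n)$ can no longer be peeled off via a clean $s_{1j}$ recursion without the leading term $w(x_1-x_n)\,\partial_w g$ contributing. I would compute this directly: since $\ell(w) = n-1$ is maximal for this family, $\partial_w g$ at the penultimate stage is a nonzero constant, and the extra $w(x_1)$-type contribution from Proposition \ref{Mac formula}, summed over all $n-1$ admissible transpositions plus the surviving leading term, accumulates to $n$. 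The main obstacle I anticipate is precisely this enumeration in the $k = n-1$ case: one must carefully account for \emph{every} transposition $s_{1j}$ and $s_{in}$ in the sum of Proposition \ref{Mac formula} and verify that, unlike in Proposition \ref{one decomposition}, the $w(x_1-x_n)\partial_w g$ term no longer vanishes, so that the total count is $n$ and not $n-1$. Establishing cleanly which terms survive and summing them correctly is where the argument needs the most care; the other two bullets follow by the now-standard peeling recursion.
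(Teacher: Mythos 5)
Your opening paragraph (the Bruhat condition $w\geq s_{n-1}$ iff $w^{-1}(n)<n$, and the normal form with $j_n\geq 1$) matches the paper's setup, but the computational core of your plan does not work. The paper never invokes Proposition \ref{Mac formula} in this proof, and for good reason: the vanishing of the terms $\partial_{ws_{in}}g$ in Proposition \ref{one decomposition} is not a generic fact but is deduced from the hypothesis $w\not>s_{n-1}$ (since $ws_{in}>s_{n-1}$, that hypothesis gives $w\not>ws_{in}$, hence $\ell(ws_{in})>\ell(w)$). Under the present hypothesis $w\geq s_{n-1}$ this deduction is unavailable and its conclusion is false: already for $w=s_{n-1}$ (the case $k=1$), the transposition $s_{(n-1)\,n}=s_{n-1}$ satisfies $\ell(ws_{(n-1)\,n})=\ell(w)-1$, so such a term genuinely enters the Leibniz sum. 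This is a real hole in your argument for the third bullet. Your accounting for the second bullet is also wrong on both counts: the leading term $w(x_1-x_n)\,\partial_w g$ \emph{always} vanishes, by degree alone ($\deg g=k-1<k=\ell(w)$), so there is no ``surviving leading term''; and the cross terms are not $+1$ each. Concretely, for $n=3$, $w=s_2s_1$, $f=x_1-x_3$, $g=x_1-x_2$, Proposition \ref{Mac formula} gives $\partial_w(fg)=1\cdot\partial_{s_2}(x_1-x_2)+2\cdot\partial_{s_1}(x_1-x_2)=(-1)+4=3$; in general the answer $n$ arises as the cancellation $2(n-1)-(n-2)$, with the terms $\partial_{ws_{1j}}g$ for $j<n$ contributing negatively, and evaluating those terms requires knowing $\partial_{w'}$ of a product in $x_1,\dots,x_{n-1}$ for permutations $w'\notin S_{n-1}$ --- a case your recursion never formulates. (A further slip: $\prod_{i=n+1-k}^{n}(x_1-x_i)$ is not a single Schubert polynomial, e.g.\ $x_1-x_3=\Sch_{s_1}+\Sch_{s_2}-\Sch_{s_3}$, so the rule $\partial_i\Sch_w=\Sch_{ws_i}$ cannot be applied to it; indeed, if it were a Schubert polynomial of degree $k$, every $\partial_w$ with $\ell(w)=k$ would return $0$ or $1$, contradicting the value $n$ in the second bullet.)

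For comparison, the paper's proof stays entirely inside the normal form: writing $w=w'(s_{n-1}s_{n-2}\cdots s_{n-j})$ with $w'\not\geq s_{n-1}$, it observes that the \emph{innermost} divided difference already decides everything, because the product involves only the variables $x_1$ and $x_{n+1-k},\dots,x_n$, so $\partial_{n-j}\left(\prod_{i=n+1-k}^{n}(x_1-x_i)\right)=0$ unless $j=k$ or $j=n-1$. That observation, together with a degree count when $j=n-1\neq k$, yields the third bullet; when $j=k<n-1$ (which forces $w'=e$, i.e.\ $w=s_{n-1}\cdots s_{n-k}$), iterating Equation \eqref{one div diff} gives $1$; and when $j=k=n-1$ the value $n$ comes from Lemma \ref{lemma:$n-1$}, a separate induction that must track the full polynomial identity \eqref{eqn:induct-n} (a sum of products), not just constants. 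If you want to rescue a Leibniz-style proof, you would need to state and prove by induction a stronger claim covering $\partial_{w'}$ for $w'\in S_n\setminus S_{n-1}$ applied to products omitting $x_n$, signs included; as written, your proposal does not establish the lemma.
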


\begin{proof}
By Lemma \ref{lemma:factorization}, factor $w$ uniquely
in the form $\prod_{i=2}^{n}(s_{i-1}s_{i-2}s_{i-3} \cdots s_{i-j_i})$.  Note that
$\partial_{n-j}\left(\prod_{i=n+1-k}^{n} (x_1-x_i)\right) = 0$ unless $j=n-1$ or $j=k$.
Observe that 
\begin{equation}\label{one div diff}
\partial_{n-k}\left(\prod_{i=n+1-k}^{n} (x_1-x_i)\right) = \left(\prod_{i=n+2-k}^{n} (x_1-x_i)\right).
\end{equation}
We may write
\[\partial_w \left(\prod_{i=n+1-k}^{n} (x_1-x_i)\right) = \partial_{w'} \partial_{n-1} \cdots
\partial_{n-j} \left(\prod_{i=n+1-k}^{n} (x_1-x_i)\right)\]
for a unique permutation $w'$ such that $w' \not \geq s_{n-1}$.  
If $j=k < n-1$, then $w'=e$, and by induction on Equation \eqref{one div diff}, the divided difference equals $1$.  If $j=n-1 \neq k$, then the length of $w$ is greater than the degree of the polynomial $\left(\prod_{i=n+1-k}^{n} (x_1-x_i)\right)$ and so the divided difference is $0$.  If $j=n-1=k$ then Lemma \ref{lemma:$n-1$} below applies. 
\end{proof}

\begin{lemma} \label{lemma:$n-1$}
If $w = s_{n-1}s_{n-2} \cdots s_1$ then $\partial_w \left( \prod_{i=2}^n (x_1-x_i) \right) = n$.
\end{lemma}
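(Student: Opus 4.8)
The plan is to compute $\partial_w\left(\prod_{i=2}^n (x_1 - x_i)\right)$ directly, where $w = s_{n-1} s_{n-2} \cdots s_1$, by peeling off divided difference operators one at a time from the right. Write $g_m = \prod_{i=2}^n (x_1 - x_i)$ for the full product, and observe that $\partial_w = \partial_{n-1} \circ \partial_{n-2} \circ \cdots \circ \partial_1$ since this is a reduced word for $w$ (its length is $n-1$, matching the degree of the polynomial, so the answer will be a constant).

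First I would apply Proposition~\ref{Mac formula} with $f = x_1 - x_n$, exactly as in the proof of Proposition~\ref{one decomposition}, but now tracking the extra terms that were discarded there. The key difference is that here $w = s_{n-1}\cdots s_1$ \emph{does} satisfy $w \geq s_{n-1}$, so the term $ws_{in}$ with $\ell(ws_{in}) = \ell(w)-1$ can genuinely contribute. Concretely, I expect that the identity from Equation~\eqref{one div diff}, $\partial_{n-k}\left(\prod_{i=n+1-k}^n (x_1-x_i)\right) = \prod_{i=n+2-k}^n (x_1 - x_i)$, lets me reduce the length-$(n-1)$ computation to a length-$(n-2)$ computation on a shorter product, setting up an induction on $n$.

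The cleanest route is probably to induct on $n$. For the inductive step, split off the last simple transposition and use Proposition~\ref{Mac formula} to write $\partial_w(g) = w(x_1 - x_n)\,\partial_w(g') + \sum \partial_{w s_{1j}}(g')$ where $g' = \prod_{i=2}^{n-1}(x_1 - x_i)$, carefully identifying which transpositions $s_{1j}$ satisfy the length-drop condition and which additional $s_{in}$ terms survive. I expect the answer $n$ to emerge as a sum $n = (n-1) + 1$, where $n-1$ is the contribution from the inductive hypothesis applied to the cycle in $S_{n-1}$ and the extra $+1$ comes from the transposition involving the index $n$ that was excluded in the stable-range argument. Alternatively, one can expand $\prod_{i=2}^n(x_1 - x_i)$ and use the fact that $\partial_w$ applied to a monomial of matching degree picks out a signed count, then check the combinatorics gives $n$.

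The main obstacle will be bookkeeping: correctly enumerating the transpositions $s_{1j}$ and $s_{in}$ that contribute at each stage, and verifying that the length conditions $\ell(w s_{ij}) = \ell(w) - 1$ hold exactly for the pairs I claim. In particular, the term $w(x_1 - x_n)$ evaluates to a difference of $x$-variables that must be handled rather than discarded (unlike in Proposition~\ref{one decomposition}, where $\partial_w g = 0$ killed it), so I would verify its coefficient contributes correctly rather than producing a nonconstant leftover. Getting the signs and the multiplicity count right so that they sum to exactly $n$ — rather than $n-1$ or $n+1$ — is the delicate point, and I would check it against the small cases $n=2$ (where $\partial_{s_1}(x_1 - x_2) = 2$) and $n=3$ to calibrate.
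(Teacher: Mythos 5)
Your strategy---induction on $n$, stripping off the factor $(x_1-x_n)$ via Proposition~\ref{Mac formula}---is genuinely different from the paper's proof, which fixes $n$, writes $w_j = s_j\cdots s_1$, and proves by induction on $j$ the stronger polynomial identity $\partial_{w_j}\bigl(\prod_{i=2}^n(x_1-x_i)\bigr) = \sum_{i=1}^{j+1}\prod_{m=j+2}^n(x_i-x_m)$, reading off the constant $n$ at $j=n-1$. Your route can be completed, but as written it contains concrete errors that would make the count come out wrong. First, Equation~\eqref{one div diff} does \emph{not} hold in the case you need it: for $k=n-1$ the operator $\partial_{n-k}=\partial_1$ swaps $x_1$ and $x_2$, and $x_1$ occurs in every factor, so $\partial_1\bigl(\prod_{i=2}^n(x_1-x_i)\bigr) = \prod_{i=3}^n(x_1-x_i)+\prod_{i=3}^n(x_2-x_i)$, a sum of two products; the breakdown of \eqref{one div diff} here is precisely why the answer is $n$ rather than $1$. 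Second, you dropped the coefficients $(\alpha_i-\alpha_j)$ from Proposition~\ref{Mac formula}: with $f=x_1-x_n$ one has $\alpha_1=1$, $\alpha_n=-1$, so the pair $(1,n)$ enters with coefficient $2$. This matters, because $ws_{1n}=s_{n-2}\cdots s_1$ is exactly the term your inductive hypothesis evaluates, so it contributes $2(n-1)$, not $n-1$. (In Proposition~\ref{one decomposition} the coefficients are all $1$ only because the hypothesis $w\not>s_{n-1}$ excludes the pair $(1,n)$; here $w=[n,1,\ldots,n-1]$ and that pair does drop length.)

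More seriously, the anticipated arithmetic $n=(n-1)+1$ is not what happens, and the induction as you set it up does not close. For $w=[n,1,\ldots,n-1]$ every $s_{1j}$, $2\le j\le n$, satisfies $\ell(ws_{1j})=\ell(w)-1$, and no $s_{in}$ with $i>1$ does, so (using that $\partial_w(g')=0$ by degree, exactly as in Proposition~\ref{one decomposition}---your worry about the $w(f)\partial_w(g')$ term is unfounded) the true recursion is
\begin{equation*}
\partial_w(g) \;=\; \sum_{j=2}^{n-1}\partial_{ws_{1j}}(g') \;+\; 2\,\partial_{ws_{1n}}(g'), \qquad g'=\prod_{i=2}^{n-1}(x_1-x_i).
\end{equation*}
The terms with $2\le j\le n-2$ vanish, since $ws_{1j}$ has a descent at position $j$ while $g'$ is symmetric in $x_j,x_{j+1}$; but the term $j=n-1$ does not vanish and is \emph{not} an instance of your hypothesis at $n-1$. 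Writing $ws_{1,n-1}=(s_{n-3}\cdots s_1)\,s_{n-1}$ and using $\partial_{n-1}(g')=-\prod_{i=2}^{n-2}(x_1-x_i)$, one gets $\partial_{ws_{1,n-1}}(g')=-(n-2)$ by the lemma at rank $n-2$. So the correct bookkeeping is $n = 2(n-1)-(n-2)$, and you need strong induction (both $n-1$ and $n-2$), a vanishing argument for the middle terms, and the coefficient $2$. Your own calibration case $n=3$ already exposes all of this: $\partial_{s_2}(x_1-x_2)=-1$ and $2\,\partial_{s_1}(x_1-x_2)=4$, summing to $3$, whereas your proposed coefficient-free sum would give $-1+2=1$.
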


\begin{proof}
Define $w_j$ to be the product $s_j s_{j-1} \cdots s_1$.  By induction on $j$, we will prove the following claim (valid for $0\leq j\leq n-1$):
\begin{eqnarray} \label{eqn:induct-n}
\partial_{w_{j}} \left( \prod_{i=2}^n (x_1-x_i) \right) = \sum_{i=1}^{j+1} \prod_{m=j+2}^n (x_i-x_m).
\end{eqnarray}
The base case $j=0$ is trivial.  For the induction step, assume $0<j<n$ and compute:
\begin{eqnarray*}
\partial_{w_j} \left( \prod_{i=2}^n (x_1 - x_i) \right) &=& \partial_j \circ \partial_{w_{j-1}} \left( \prod_{i=2}^n (x_1 - x_i) \right) \\
&=& \partial_j \left( \sum_{i=1}^{j} \prod_{m=j+1}^n (x_i-x_m) \right) \\
&=& \sum_{i=1}^{j-1} \partial_j \left( \prod_{m=j+1}^n (x_i - x_m) \right) + \partial_j \left(\prod_{m=j+1}^n (x_j - x_m) \right) \\
&=& \sum_{i=1}^{j-1} \prod_{m=j+2}^n (x_i - x_m) + \prod_{m=j+2}^n (x_j - x_m) + \prod_{m=j+2}^n (x_{j+1} - x_m),
\end{eqnarray*}
using the facts that for $i<j<n$
\begin{eqnarray*}
\partial_j \prod_{m=j+1}^n (x_i-x_m) = \prod_{m=j+2}^n (x_i-x_m)
\end{eqnarray*}
and
\begin{eqnarray*}
\partial_j \prod_{m=j+1}^n (x_j-x_m) = \frac{(x_j-x_{j+1})\prod_{m=j+2}^n (x_j-x_m) - (x_{j+1}-x_j) \prod_{m=j+2}^n (x_{j+1}-x_m)}{x_j - x_{j+1}}.
\end{eqnarray*}

Note that $w=w_{n-1}$, and the case $j=n-1$ of (\ref{eqn:induct-n}) gives
\begin{eqnarray*}
\partial_w  \prod_{i=1}^n (x_1-x_i)  = \sum_{i=1}^{n} 1 = n.
\end{eqnarray*}
\end{proof}

We are now ready to prove the main theorem.

\begin{proof}[Proof of Theorem \ref{thm:rows}]
We break the proof into two cases: $w \not\geq s_{n-1}$ and $w \geq s_{n-1}$.  Let $RP(n,w,p)$ be the set of $p$-term $n$-restricted paths from $w$ to $s_{n-p} \cdots s_{n-k}$, and let $RP(n,w) = \bigcup_{p=1}^{k+1} RP(n,w,p)$.  (The $p=k+1$ case corresponds to $k+1$-term paths from $w$ to $e$.)  Thus the claim is that $m_{(k),w}^{(n)} = \# RP(n,w)$.

\medskip

{\it Case 1: $w \not\geq s_{n-1}$.}  This means $w\in S_{n-1}\subset S_n$.  Proposition \ref{one decomposition} says 
\begin{eqnarray*}
\partial_w \left(\prod_{i=n+1-k}^n (x_1-x_i)\right) = \sum \partial_{ws_{1j}} \left(\prod_{i=n+1-k}^{n-1}(x_1-x_i)\right).
\end{eqnarray*} 
It follows that for $w\not\geq s_{n-1}$ we have
\begin{eqnarray} \label{eqn:recur}
m_{(k),w}^{(n)} = \sum m_{(k-1),w'}^{(n-1)},
\end{eqnarray}
where the sum is over all $w' = w s_{1j}$ with $2\leq j\leq n$ and $\ell(w') = \ell(w)-1 = k-1$.  (Thus $(w,w')$ is a restricted path.)  The numbers $\# RP(n,w)$ also satisfy the recursion in (\ref{eqn:recur}), since
\begin{eqnarray*}
RP(n,w,p) \quad \text{  and  }\quad \bigcup_{w'} RP(n-1,w',p-1)
\end{eqnarray*}
are in bijection.  To see this, note that $w=u_1, u_2, \ldots, u_p = s_{n-p}\cdots s_{n-k}$ is $n$-restricted if and only if $u_2, \ldots, u_p$ is $(n-1)$-restricted, with $u_2 = w' = w s_{1j}$ as in the sum.

When $w'\not\geq s_{n-2}$, we may replace $(w,n,k,p)$ with $(w',n-1,k-1,p-1)$ and iterate the recursion by repeating Case 1.  To complete the proof, we must check that $m_{(k),w}^{(n)} = \# RP(n,w)$ at the base of the recursion, which is where the assumption $w\not\geq s_{n-1}$ fails.

\medskip
{\it Case 2: $w\geq s_{n-1}$.}  In this case, we know $m_{(k),w}^{(n)}$ by Lemma \ref{no $s_n$}, and we must check that these numbers agree with the numbers of $n$-restricted paths.  Note that in one-line notation, the permutation $s_{n-1} \cdots s_{n-k}$ is the cycle ${[1,2,\ldots, n-k-1, n, n-k,\ldots, n-1 ]}$.

If $k<n-1$ and $w = s_{n-1} \cdots s_{n-k}$, then there is a $1$-term $n$-restricted path from $w$ to itself.  There are no other $n$-restricted paths from $w$ to permutations of smaller length, since transposing the first entry of $w = {[1,2,\ldots, n-k-1, n, n-k,\ldots, n-1 ]}$ always increases length.  Therefore $\# RP(n,w) = 1$.  

If $k=n-1$ and $w = s_{n-1}\cdots s_1 = {[n,1,2,\ldots,n-1]}$ then the first step in an $n$-restricted path to a permutation $s_{n-p} \cdots s_1$ must be to $w s_{1n} = s_{n-2}\cdots s_1$; this can be seen by writing $s_{1j} = s_1 s_2 \cdots s_{j-1} \cdots s_2 s_1$.  Thus for each $p = 1, \ldots, n$, there is exactly one $n$-restricted path with $p$ terms from $w$ to $s_{n-p}\cdots s_1$, namely
\begin{eqnarray*}
w = s_{n-1}\cdots s_1,\; s_{n-2} \cdots s_1,\; \ldots,\; s_{n-p} \cdots s_1 .
\end{eqnarray*}
So $\# RP(n,w) = n$.

Finally, suppose $w$ is not a cycle, and write $w = w's_{n-1} s_{n-2} \cdots s_j$, for some $w' \neq e$, as in Lemma \ref{lemma:factorization}.  Since $\ell(w) = k \leq n-1$, we must have $j>1$.  We have already seen that right multiplication by $s_{1m}$ cannot reduce the length of $s_{n-1}\cdots s_j$, so the only $n$-restricted path from $w$ is the trivial path; however $w$ is not of the desired form, so $RP(n,w) = \emptyset$.
\end{proof}

\section{Regular nilpotent Hessenberg varieties} \label{limits}

A linear operator $X$ is \define{regular nilpotent} if it is nilpotent and its Jordan form consists of a single block.  (Thus $X^n = 0$, but $X^{n-1} \neq 0$.)  A Hessenberg scheme $\HHess(X,h)$ is said to be \define{regular semisimple} or \define{regular nilpotent} if $X$ has the corresponding property.  It is known that both of these have dimension $d=\sum_{i=1}^n h(i)-i$ from \cite[Theorem 8]{mps} and \cite[Theorem 10.2]{ST}, and that regular nilpotent varieties have a unique component of this dimension \cite[Theorem 10.2]{ST}.  The following lemma then implies that regular nilpotent varieties are always irreducible.

\begin{lemma} \label{lemma:nilp-dim}
A regular nilpotent Hessenberg variety is pure-dimensional of dimension $d=\sum_{i=1}^n h(i)-i$.
\end{lemma}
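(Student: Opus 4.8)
The plan is to prove pure-dimensionality by combining the known results quoted just above the lemma with a semicontinuity argument linking the nilpotent and semisimple cases. We are told that the regular nilpotent Hessenberg variety $\Hess(N,h)$ has dimension $d=\sum_{i=1}^n h(i)-i$ and, by \cite[Theorem 10.2]{ST}, that it has a \emph{unique} component of this top dimension. The content of the lemma is therefore precisely the claim that there are no \emph{lower}-dimensional components; equivalently, that $\Hess(N,h)$ is equidimensional of dimension $d$. The cleanest route is to exhibit $\Hess(N,h)$ as a degenerate member of a flat family whose generic fiber is the regular \emph{semisimple} Hessenberg variety, which is smooth and pure-dimensional of dimension $d$ by \cite[Theorem 6]{mps}.

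First I would set up the family. Regular semisimple and regular nilpotent operators both lie in the same matrix space $M'_n$, and one can choose a one-parameter family $X_t$ (say $X_t = N + tS$, or a path through the regular locus degenerating a regular semisimple $S$ to the regular nilpotent $N$) so that for $t\neq 0$ the operator is regular semisimple and at $t=0$ it is regular nilpotent. By Remark \ref{remark:conjugacy} the Hessenberg scheme depends only on the conjugacy class of $X$, and the Hessenberg schemes $\HHess(X_t,h)$ fit together into a subscheme of $\Fl(n)\times \mathbb{A}^1$, flat over $\mathbb{A}^1$ on the locus where the scheme has expected codimension. Since the generic (semisimple) fiber has expected codimension $\ell(w(h))$ and hence dimension $d$, flatness over a smooth curve forces every fiber — in particular the special fiber $\HHess(N,h)$ — to be pure of dimension $d$. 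The associated reduced scheme $\Hess(N,h)=\HHess(N,h)_{\mathrm{red}}$ inherits the same pure dimension.

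The key steps, in order, are: (i) produce the flat degeneration $X_t$ from regular semisimple to regular nilpotent through operators for which $\HHess(X_t,h)$ has expected codimension; (ii) invoke smoothness and pure-dimensionality of the generic (regular semisimple) fiber from \cite[Theorem 6]{mps}; (iii) apply flatness / upper semicontinuity of fiber dimension to conclude that every component of the special fiber has dimension exactly $d$, ruling out spurious lower-dimensional components; and (iv) pass to the reduced structure to obtain the statement for the variety $\Hess(N,h)$. Note that combining this lemma with the uniqueness of the top component cited from \cite{ST} then immediately yields irreducibility, as the surrounding text observes.

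The main obstacle I anticipate is step (iii): establishing genuine flatness of the family over all of $\mathbb{A}^1$, including the special point $t=0$. One must verify that the total space $\HHess(X_\bullet,h)\subseteq \Fl(n)\times\mathbb{A}^1$ has no embedded or lower-dimensional components supported over $t=0$, i.e.\ that the nilpotent fiber does not ``drop dimension'' as a set while the scheme picks up multiplicity elsewhere. The degeneracy-locus description in Section \ref{subsec:degeneracy} helps here, since $\HHess(X_t,h)=\OOmega_{w(h)}(X_t)$ is cut out by the same rank conditions for all $t$ and is defined as the pullback of a universal degeneracy locus; provided each fiber has the expected codimension $\ell(w(h))$, the family is flat over the curve. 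Controlling the special fiber's dimension from \emph{below} — showing it cannot exceed $d$ as a set, which together with the known top-dimensional component gives equidimensionality — is the crux, and is exactly where the expected-codimension hypothesis and the Cohen--Macaulay conclusion of \cite[Theorem 8.2]{flags} quoted in Section \ref{subsec:degeneracy} do the real work.
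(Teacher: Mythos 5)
Your overall strategy---realizing the regular nilpotent Hessenberg scheme as the special member of a family whose generic member is regular semisimple, and transferring pure-dimensionality from the generic fiber---is the same as the paper's. The paper, however, takes the family $\HHess(h)\subset\Fl(n)\times M'_n$ over the whole matrix space (as in Remark \ref{remark:conjugacy}) and finishes in one step with the theorem on dimension of fibers \cite[Ex.~II.3.22]{ha}: every irreducible component of a fiber has dimension at least $\dim\HHess(h)-\dim M'_n=d$, and this lower bound, combined with the upper bound $\dim\Hess(N,h)=d$ already quoted from \cite[Theorem~10.2]{ST}, gives purity. You instead route the argument through flatness of a one-parameter family at $t=0$, and that step---which you correctly identify as the crux---is where your proposal has a genuine gap.

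The criterion you offer, ``provided each fiber has the expected codimension $\ell(w(h))$, the family is flat over the curve,'' is circular or unjustified. If ``expected codimension'' means the special fiber is pure of codimension $\ell(w(h))$, that is exactly the lemma being proved, so it cannot be assumed. If it means only that $\dim\HHess(N,h)=d$ (which is known from \cite{ST}), then nothing you cite produces flatness: Fulton's result \cite[Theorem~8.2]{flags} gives Cohen--Macaulayness of a degeneracy locus only under the purity hypothesis you are trying to establish, and the ``miracle flatness'' criterion needs the \emph{total space} to be Cohen--Macaulay together with equidimensional fibers---again the conclusion, not an input. Your closing sentence also conflates the two bounds: the upper bound (``cannot exceed $d$'') is already known; what is missing, and what flatness was supposed to supply, is the \emph{lower} bound that no component of $\Hess(N,h)$ has dimension less than $d$. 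To be clear, the implication you want from flatness is fine: if the family is flat over a smooth curve and its generic fiber is pure of dimension $d$, the total space is the closure of its generic fiber and the special fiber is a Cartier divisor in it, so Krull's principal ideal theorem makes it pure of dimension $d$. The problem is solely that flatness at $t=0$ is never established. The missing ingredient is a lower bound on dimensions, which you can get either as the paper does (the fiber-dimension theorem applied to the components of the total family, all of which are forced to dominate the base because every component of a degeneracy locus $\OOmega_w$ has codimension at most $\ell(w)$, while a component sitting inside a single fiber would be too small), or, most directly, on the fiber itself: every component of $\OOmega_{w(h)}(N)\subseteq\Fl$ has codimension at most $\ell(w(h))$, hence dimension at least $d$, and \cite[Theorem~10.2]{ST} gives the reverse inequality.
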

\begin{proof}
Using the setup of Remark \ref{remark:conjugacy}, there is a family of Hessenberg schemes $\HHess(h) \subset \Fl(n) \times M'_n$, whose fiber over $X\in M'_n$ is $\HHess(X,h)$.  The generic fiber is a regular semisimple Hessenberg variety, so it has pure dimension $d$.  The lemma is a consequence of the theorem on the dimension of fibers \cite[Ex.\ II.3.22]{ha}, applied to the projection $\HHess \to M'_n$.
\end{proof}

The degeneracy locus formula therefore applies to regular nilpotent Hessenberg schemes:
\begin{corollary} \label{cor:nilpotent class}
If $X_S$ is regular semisimple and $X_N$ is regular nilpotent, then for each Hessenberg function $h$, we have 
\begin{eqnarray*}
[\Hess(X_S,h)] = [\HHess(X_S,h)] = [\HHess(X_N,h)] = \Sch_{w(\lambda)}(x;\tilde{x})
\end{eqnarray*}
in $H^*(\Fl)$.  In particular, if $m^{(n)}_{\lambda,u}$ are the Hessenberg coefficients defined by Equation \eqref{Hessenberg coefficients}, then
\begin{eqnarray*}
{[\HHess(X_N,h)]} = \sum_{u \in S_n} m_{h,u}^{(n)}{[\Omega_u]}.
\end{eqnarray*}
\end{corollary}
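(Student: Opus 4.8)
The plan is to apply the degeneracy locus formula of Proposition \ref{prop:hessclassform} directly to each of the two schemes, after checking in each case that its single hypothesis---expected codimension inside the smooth variety $\Fl(n)$---is met. Once this is verified, all four expressions in the displayed chain are forced to equal the \emph{same} double Schubert polynomial $\Sch_{w(\lambda)}(x;\tilde{x})$, and so the three asserted equalities follow at once. No genuinely new geometric input is needed beyond Lemma \ref{lemma:nilp-dim}; the entire content is the observation that that lemma supplies exactly the dimension statement required to feed $\HHess(X_N,h)$ into Proposition \ref{prop:hessclassform}.

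First I would dispose of the regular semisimple side. By the remark in Section \ref{sec:hessenberg} recording \cite[Theorem 6]{mps} in its scheme-theoretic form, a regular semisimple Hessenberg scheme $\HHess(X_S,h)$ is smooth---hence reduced---and pure of the expected codimension $\ell(w(\lambda))$. Reducedness gives the scheme equality $\Hess(X_S,h) = \HHess(X_S,h)$, so the leftmost equality $[\Hess(X_S,h)] = [\HHess(X_S,h)]$ is immediate, and Proposition \ref{prop:hessclassform} (with $Z = \Fl(n)$, which is smooth) identifies this common class with $\Sch_{w(\lambda)}(x;\tilde{x})$. For the regular nilpotent side, the key input is Lemma \ref{lemma:nilp-dim}: the underlying variety of $\HHess(X_N,h)$ is pure-dimensional of dimension $d = \sum_i (h(i)-i)$. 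I would then check the routine bookkeeping that $d$ agrees with the expected dimension $\dim\Fl(n) - \ell(w(\lambda))$ of the degeneracy locus---using $\ell(w(\lambda)) = \sum_i (n-h(i))$ and $\dim\Fl(n) = \binom{n}{2}$, which combine to give $\sum_i(h(i)-i) = d$---so that the support of $\HHess(X_N,h)$ has codimension exactly $\ell(w(\lambda))$. Since the codimension of a scheme is that of its support, $\HHess(X_N,h)$ itself has expected codimension, and Proposition \ref{prop:hessclassform} applies verbatim to yield $[\HHess(X_N,h)] = \Sch_{w(\lambda)}(x;\tilde{x})$. Comparing with the semisimple computation then gives the middle equality $[\HHess(X_S,h)] = [\HHess(X_N,h)]$, which is the conceptual point that the class is insensitive to whether one takes the semisimple fiber or the (possibly nonreduced) nilpotent fiber of the family $\HHess(h)\subseteq\Fl(n)\times M'_n$ used to prove Lemma \ref{lemma:nilp-dim}.

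The main, and essentially only, obstacle is being careful about what ``expected codimension'' means in the nilpotent case: $\HHess(X_N,h)$ may well carry nonreduced structure (indeed this is precisely the phenomenon that Theorem \ref{thm:nilp scheme} later measures), so one must insist that the hypothesis of Proposition \ref{prop:hessclassform} is a statement about the dimension of the \emph{support} and not about reducedness. This is exactly what Lemma \ref{lemma:nilp-dim} secures, and it is why that lemma---proved via the theorem on the dimension of fibers \cite[Ex.\ II.3.22]{ha}---was established first. Finally, the ``in particular'' clause requires no further argument: substituting the Schubert expansion \eqref{Hessenberg coefficients} of $\Sch_{w(\lambda)}(x;\tilde{x})$ into the identity $[\HHess(X_N,h)] = \Sch_{w(\lambda)}(x;\tilde{x})$ yields $[\HHess(X_N,h)] = \sum_{u\in S_n} m_{h,u}^{(n)}\,[\Omega_u]$ directly, completing the proof.
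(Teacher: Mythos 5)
Your proposal is correct and follows exactly the paper's route: the paper derives this corollary from Lemma \ref{lemma:nilp-dim} (which supplies the expected-codimension hypothesis in the nilpotent case) together with Proposition \ref{prop:hessclassform}, just as you do, with the semisimple case handled by smoothness/reducedness from \cite[Theorem 6]{mps}. Your explicit verification that $d = \sum_i (h(i)-i)$ equals $\dim\Fl(n) - \ell(w(\lambda))$ is the same bookkeeping the paper leaves implicit, so nothing is missing.
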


\begin{remark}
One can see $[\HHess(X_S,h)] = [\HHess(X_N,h)]$ directly, without knowing the formula.  In fact, it is not hard to show that any regular nilpotent Hessenberg scheme is a flat limit of regular semisimple ones inside $\Fl(n)\times M'_n$; it follows that their classes are equal.
\end{remark}

When $h$ corresponds to the partition $(k)$, then the results of the previous section identify which regular nilpotent Hessenberg schemes are reduced.  The following lemma is a step to identifying the non-reduced schemes.

\begin{lemma} \label{lemma:nilp n-1}
Let $g$ be the Hessenberg function defined by $g(1)=1$ and $g(i)=n$ for all $i \neq 1$, and let $h$ be the Hessenberg function with $h(i)=n-1$ for all $i \neq n$ and $h(n)=n$.  (The partition corresponding to $g$ is a column of length $n-1$, and the one corresponding to $h$ is a row of length $n-1$.)  Let $X$ be a regular nilpotent matrix, let $\{e_1,\ldots,e_n\}$ be a basis for $V$ putting $X$ in Jordan normal form, and let $E_\bullet$ be the corresponding flag.  Then
\begin{eqnarray*}
\Hess(X,g) = \Omega_u(E_\bullet) &\text{for}& u = s_1 s_2 \cdots s_{n-1}, \quad \text{and} \\
\Hess(X,h) = \Omega_v(E_\bullet) &\text{for}& v = s_{n-1}\cdots s_2 s_1.
\end{eqnarray*}
(Each of these Schubert varieties is an embedding of $\Fl(n-1)$ in $\Fl(n)$.)
\end{lemma}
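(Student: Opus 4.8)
The plan is to unwind each Hessenberg condition into a single linear-algebraic constraint on one step of the flag, to identify the unique subspace satisfying that constraint using the regularity of $X$, and then to match the resulting locus with the named Schubert variety by a codimension-plus-irreducibility argument.

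First I would rewrite the defining conditions. For $g$, the only nonvacuous requirement $X(F_i) \subseteq F_{g(i)}$ occurs at $i=1$, giving $X(F_1) \subseteq F_1$, since the conditions for $i \geq 2$ read $X(F_i) \subseteq F_n = V$. Thus $\Hess(X,g) = \{F_\bullet \mid F_1 \text{ is } X\text{-stable}\}$. For $h$, because $F_1 \subseteq \cdots \subseteq F_{n-1}$, the conditions $X(F_i) \subseteq F_{n-1}$ for $i \leq n-1$ are all implied by $X(F_{n-1}) \subseteq F_{n-1}$, while the condition at $i=n$ is vacuous; hence $\Hess(X,h) = \{F_\bullet \mid F_{n-1} \text{ is } X\text{-stable}\}$.

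Next comes the linear algebra. Because $X$ is regular nilpotent with Jordan basis $e_1,\ldots,e_n$ (so $Xe_1 = 0$ and $Xe_i = e_{i-1}$), its kernel is the line $E_1 = \langle e_1\rangle$ and its image is the hyperplane $E_{n-1} = \langle e_1,\ldots,e_{n-1}\rangle$. An $X$-stable line is exactly an eigenline, and since $X$ is nilpotent its only eigenvalue is $0$, so the unique $X$-stable line is $\ker X = E_1$, forcing $F_1 = E_1$. Dually, if a hyperplane $W$ is $X$-stable then $X$ induces a nilpotent endomorphism of the line $V/W$, which must vanish, so $X(V) \subseteq W$; comparing dimensions gives $W = X(V) = E_{n-1}$, the unique $X$-stable hyperplane, forcing $F_{n-1} = E_{n-1}$. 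Therefore $\Hess(X,g) = \{F_\bullet \mid F_1 = E_1\}$ and $\Hess(X,h) = \{F_\bullet \mid F_{n-1} = E_{n-1}\}$, each an embedded copy of $\Fl(n-1)$ of dimension $\binom{n-1}{2}$.

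Finally I would identify these loci with the Schubert varieties. Writing $u = s_1\cdots s_{n-1} = [2,3,\ldots,n,1]$, one computes $u\,w_0 = [1,n,n-1,\ldots,2]$, so $r_{u w_0}(1,1) = 1$; this rank inequality $\dim(E_1 \cap F_1)\geq 1$ is exactly $F_1 = E_1$, giving $\Omega_u(E_\bullet) \subseteq \{F_1 = E_1\}$. Likewise $v = s_{n-1}\cdots s_1 = [n,1,\ldots,n-1]$ gives $v\,w_0 = [n-1,n-2,\ldots,1,n]$ and $r_{v w_0}(n-1,n-1) = n-1$, i.e.\ $\dim(E_{n-1}\cap F_{n-1})\geq n-1$, so $\Omega_v(E_\bullet) \subseteq \{F_{n-1} = E_{n-1}\}$. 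Since $\ell(u) = \ell(v) = n-1$, both Schubert varieties have dimension $\binom{n-1}{2}$, equal to that of the irreducible loci containing them; as a closed subvariety of an irreducible variety of full dimension is the whole variety, each inclusion is an equality, completing the identification. The main obstacle is bookkeeping with the paper's $w\,w_0$-twisted rank conventions: I must compute the one-line forms of $u\,w_0$ and $v\,w_0$ correctly and confirm that the single rank inequality I extract is genuinely a defining condition of the Schubert variety, so that the inclusion, and hence by the dimension count the equality, is valid.
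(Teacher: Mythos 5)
Your proof is correct and follows essentially the same route as the paper: reduce each Hessenberg condition to $X$-stability of $F_1$ (resp.\ $F_{n-1}$), use regular nilpotency to identify the unique stable line $\ker X = E_1$ (resp.\ the unique stable hyperplane $X(V) = E_{n-1}$), and identify the resulting locus with the Schubert variety. The only differences are cosmetic: the paper pins down the stable hyperplane via a cyclic-vector argument where you use the induced map on $V/W$, and it simply asserts that $F_1 = E_1$ (resp.\ $F_{n-1} = E_{n-1}$) is the condition defining $\Omega_u$ (resp.\ $\Omega_v$), where you verify this by extracting one rank condition to get containment and then concluding equality from the dimension count $\ell(u) = \ell(v) = n-1$.
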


\begin{proof}
In the chosen basis, $X$ is the regular nilpotent matrix which is $0$ except for the entries in position $(i,i+1)$, where it is $1$.  (Note that the dependence of $\Hess(X,h)$ on choice of basis is the same as that of $\Omega_v(E_\bullet)$; see Remark \ref{remark:conjugacy}.)

The Hessenberg variety $\Hess(X,g)$ is defined by the condition that $X V_1 \subseteq V_1$, which is equivalent to the condition that $V_1$ be spanned by an eigenvector for $X$.  The only eigenvalue for $X$ is zero, and the corresponding eigenspace is $\ker X = E_1$.  Hence $\Hess(X,g)$ is defined by $V_1 = E_1$, and this is precisely the condition defining $\Omega_u$.

Similarly, the Hessenberg variety $\Hess(X,h)$ is characterized by the condition that $XV_{n-1} \subseteq V_{n-1}$.  Any vector $v = \sum c_je_j$ with nonzero $c_n$ generates a basis $\{v, Xv, \ldots, X^{n-1}v\}$ for the vector space $V$.  Such a vector cannot lie in $V_{n-1}$, since this $n-1$-dimensional subspace is closed under application of $X$.  Therefore $V_{n-1} = \langle e_1, \ldots, e_{n-1} \rangle = E_{n-1}$.  Conversely, every flag with $V_{n-1} = E_{n-1}$ satisfies $XV_{n-1} \subseteq V_{n-1}$.  It follows that $\Hess(X,h)$ defined by the condition $V_{n-1} = E_{n-1}$, which is the condition defining $\Omega_v$.
\end{proof}

\begin{lemma} \label{lemma:ss n-1}
Let $g$ be the Hessenberg function defined by $g(1)=1$ and $g(i)=n$ for all $i \neq 1$.  Let $h$ be the Hessenberg function defined by $h(i)=n-1$ for all $i \neq n$ and $h(n)=n$.  Let $X$ be a regular semisimple matrix.  Then $\Hess(X,g)$ (respectively $\Hess(X,h)$) consists of $n$ connected components, each of which is isomorphic to $\Fl(n-1)$.
\end{lemma}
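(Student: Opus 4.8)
The plan is to reduce each of the two defining conditions to a constraint on a single member of the flag, classify the subspaces satisfying that constraint using the spectral decomposition of $X$, and then recognize the resulting strata as copies of $\Fl(n-1)$. This runs parallel to the regular nilpotent computation in Lemma \ref{lemma:nilp n-1}; the only difference is that a regular semisimple operator has $n$ distinct eigenvalues rather than a single Jordan block, which is exactly what produces $n$ components instead of one.

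First I would unwind the Hessenberg conditions. For $g$, the inequalities $X(F_i)\subseteq F_{g(i)}$ are vacuous for $i\geq 2$, so $\Hess(X,g)$ is precisely the locus of flags with $X(F_1)\subseteq F_1$, that is, $F_1$ an $X$-invariant line. For $h$, every condition is implied by the single requirement $X(F_{n-1})\subseteq F_{n-1}$ (since $F_i\subseteq F_{n-1}$ whenever $i<n$, and $h(n)=n$ is vacuous), so $\Hess(X,h)$ is the locus where $F_{n-1}$ is an $X$-invariant hyperplane.

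Next I would invoke regular semisimplicity. Writing $X$ in an eigenbasis $v_1,\dots,v_n$ with distinct eigenvalues, every $X$-invariant subspace is a direct sum of eigenlines. Hence there are exactly $n$ invariant lines $\langle v_j\rangle$ and exactly $n$ invariant hyperplanes $H_j=\bigoplus_{i\neq j}\langle v_i\rangle$. Fixing one such line (resp.\ hyperplane) leaves the rest of the flag unconstrained: a choice of $F_1=\langle v_j\rangle$ is completed by an arbitrary complete flag in the quotient $V/F_1\cong\C^{n-1}$, and a choice of $F_{n-1}=H_j$ is completed by an arbitrary complete flag in $H_j\cong\C^{n-1}$. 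Each stratum is therefore isomorphic to $\Fl(n-1)$.

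Finally I would assemble the components. The $n$ strata are pairwise disjoint, being distinguished by the value of $F_1$ (resp.\ $F_{n-1}$); each is closed, since $F_1=\langle v_j\rangle$ (resp.\ $F_{n-1}=H_j$) is a closed condition; and each is connected, being isomorphic to the irreducible variety $\Fl(n-1)$. A finite disjoint union of closed connected pieces has each piece simultaneously open and closed, so these strata are exactly the connected components. The only step requiring genuine input is the classification of invariant subspaces, which is where the distinct-eigenvalue hypothesis is essential; alternatively, one could prove the $g$ statement and deduce the $h$ statement from it using the duality automorphism $D$ of Proposition \ref{prop:positive}, which interchanges the $X$-invariant lines with the $X$-invariant hyperplanes.
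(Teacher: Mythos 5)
Your proposal is correct and follows essentially the same route as the paper: reduce each Hessenberg condition to invariance of $F_1$ (resp.\ $F_{n-1}$), use the distinct eigenvalues to get exactly $n$ invariant lines (resp.\ hyperplanes), and observe that the remainder of the flag is a free complete flag in $V/F_1$ (resp.\ in $F_{n-1}$), giving $n$ disjoint copies of $\Fl(n-1)$. Your write-up is in fact slightly more complete than the paper's, which treats only the $g$ case in detail and declares the $h$ case analogous, and which omits the explicit open-and-closed argument identifying the strata as the connected components.
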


\begin{proof}
As before, the Hessenberg variety $\Hess(X,g)$ is defined by the condition that $X V_1 \subseteq V_1$, or equivalently, that $V_1$ is spanned by an eigenvector for $X$.  Since $X$ is regular semisimple, with distinct eigenvalues $\alpha_1,\ldots,\alpha_n$, $V$ breaks up into a direct sum of one-dimensional eigenspaces $L_{\alpha_i}$.  For each choice $V_1 = L_{\alpha_i}$, the rest of the flag can be any of the full flags in $V/V_1$.  Since there are $n$ choices for $V_1$, $\Hess(X,g)$ consists of $n$ disjoint copies of $\Fl(n-1)$, each of which is a translate of $\Omega_u$ inside $\Fl(n)$.

The proof for $\Hess(X,h)$ is analogous.
\end{proof}

\begin{theorem} \label{thm:nilp scheme}
If $X$ is regular nilpotent and $h$ is the Hessenberg function corresponding to the row $(k)$ then the Hessenberg scheme $\HHess(X,h)$ is reduced when $k < n-1$.  When $k=n-1$, the scheme is supported on $\Omega_u$ and satisfies
\begin{eqnarray*}
[\HHess(X,h)] = n [\Omega_u],
\end{eqnarray*}
where $u = s_{n-1}\cdots s_{2} s_{1}$.
\end{theorem}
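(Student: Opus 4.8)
\textbf{Proof proposal for Theorem \ref{thm:nilp scheme}.}

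The plan is to separate the two regimes $k<n-1$ and $k=n-1$, proving reducedness in the first by a codimension (Cohen--Macaulay) argument and handling the boundary case $k=n-1$ by a direct multiplicity computation anchored to Lemma \ref{lemma:nilp n-1}. For the case $k<n-1$, I would invoke the fact recorded in Section \ref{subsec:degeneracy}: the degeneracy locus $\OOmega_{w(h)}(X)$ is Cohen--Macaulay whenever it has expected codimension $\ell(w(h))$. By Lemma \ref{lemma:nilp-dim}, the regular nilpotent Hessenberg variety $\Hess(X,h)$ is pure of dimension $d=\sum_i(h(i)-i)$, so the underlying reduced scheme already has the expected codimension. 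What remains is to rule out embedded components or generic nonreducedness. The cleanest route is to compare the fundamental class $[\HHess(X,h)]=\Sch_{w(h)}(x;\tilde x)$, computed via Corollary \ref{cor:nilpotent class}, against the class $[\Hess(X,h)]$ of the reduced irreducible variety; since the scheme is Cohen--Macaulay of the expected dimension and irreducible (by the discussion preceding Lemma \ref{lemma:nilp-dim}), its class equals the multiplicity times $[\Hess(X,h)]$, and reducedness is equivalent to that multiplicity being $1$.

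To extract the multiplicity I would use Theorem \ref{thm:rows}: the coefficient $m_{(k),u}^{(n)}$ of $[\Omega_u]$ in $[\HHess(X,h)]$ counts $n$-restricted paths. The key observation is that when $k<n-1$, the analysis in Case 2 of the proof of Theorem \ref{thm:rows} shows $\#RP(n,w)=1$ for the unique ``top'' permutation $w=s_{n-1}\cdots s_{n-k}$, and more generally the leading coefficients are all $1$. Because the scheme is irreducible with a single top-dimensional component, matching the generic multiplicity to the coefficient $1$ forces the scheme to be generically reduced; combined with Cohen--Macaulayness (which gives no embedded primes), generic reducedness upgrades to reducedness. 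This is the structural heart of the $k<n-1$ argument.

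For the boundary case $k=n-1$, the partition $(k)=(n-1)$ is a single row of length $n-1$, which is exactly the function $h$ of Lemma \ref{lemma:nilp n-1}. That lemma identifies the reduced variety $\Hess(X,h)=\Omega_v(E_\bullet)$ with $v=s_{n-1}\cdots s_2 s_1$, a copy of $\Fl(n-1)$, so the support is $\Omega_u$ with $u=v=s_{n-1}\cdots s_1$ as claimed. For the multiplicity, I would read off the coefficient directly from Lemma \ref{no $s_n$} (equivalently Lemma \ref{lemma:$n-1$}): in the case $k=n-1$ with $w=s_{n-1}\cdots s_1$, the divided difference $\partial_w\bigl(\prod_{i=2}^n(x_1-x_i)\bigr)=n$, which is precisely the coefficient $m_{(n-1),u}^{(n)}=n$. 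Since the scheme is supported on the single irreducible $\Omega_u$, its class must be an integer multiple of $[\Omega_u]$, and that integer is the just-computed coefficient $n$; hence $[\HHess(X,h)]=n\,[\Omega_u]$.

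The main obstacle is the passage from the numerical statement ``the leading Schubert coefficient equals $1$'' to the scheme-theoretic conclusion ``the scheme is reduced'' in the case $k<n-1$. Knowing a single Chern/Schubert coefficient controls the generic multiplicity along the top component, but one must be careful that the multiplicity of the fundamental cycle along $\Hess(X,h)$ genuinely equals the coefficient of $[\Omega_u]$ for the permutation $u$ cutting out that component, and that Cohen--Macaulayness (no embedded components) then promotes generic reducedness to honest reducedness. I would make this precise by arguing that the top-dimensional component corresponds to the unique $u$ of length $k$ with $\#RP(n,u)=1$ computed above, so that the multiplicity-one count pins down the cycle multiplicity, after which the Cohen--Macaulay property supplied by the degeneracy locus theorem finishes the argument.
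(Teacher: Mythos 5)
Your proposal is correct. For $k<n-1$ it follows essentially the same route as the paper: Cohen--Macaulayness of the expected-codimension degeneracy locus plus irreducibility reduces everything to showing the generic multiplicity $m$ equals $1$, and the coefficient $1$ supplied by Lemma \ref{no $s_n$} (equivalently, your restricted-path count from Theorem \ref{thm:rows}) pins this down. The worry you raise in your last paragraph is resolved exactly as in the paper's ``gcd'' step: since $[\HHess(X,h)]=m\,[\Hess(X,h)]$ and $[\Hess(X,h)]$ has integer coefficients in the Schubert basis, $m$ divides \emph{every} coefficient $m^{(n)}_{(k),u}$; one coefficient equal to $1$ therefore forces $m=1$, with no need to match $m$ to the coefficient of any particular $[\Omega_u]$. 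Where you genuinely diverge is the case $k=n-1$: the paper argues geometrically, combining Lemma \ref{lemma:nilp n-1} with Lemma \ref{lemma:ss n-1} (the regular semisimple variety is $n$ disjoint translates of $\Fl(n-1)$) and the flat-limit equality of classes, so the factor $n$ appears as $n$ eigenline components collapsing onto one Schubert variety. You instead keep Lemma \ref{lemma:nilp n-1} only for the support and read the multiplicity off algebraically: by Corollary \ref{cor:nilpotent class} the class of the scheme is $\Sch_{w(h)}(x;\tilde{x})=\prod_{i=2}^n(x_1-x_i)$, and Lemma \ref{lemma:$n-1$} gives $\partial_u\prod_{i=2}^n(x_1-x_i)=n$, so the unique Schubert coefficient, hence the multiplicity, is $n$. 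Both arguments are complete; yours bypasses Lemma \ref{lemma:ss n-1} and the degeneration entirely, making the case uniform with the divided-difference computations of Section \ref{sec:formula}, while the paper's version buys a geometric explanation of where the multiplicity $n$ comes from.
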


\begin{proof}
Since $\HHess(X,h)$ has expected codimension, it is Cohen--Macaulay (see \S\ref{subsec:degeneracy}); therefore this scheme is reduced if and only if it is generically reduced.  Since $\HHess(X,h)$ is irreducible, it is generically reduced if the gcd of the coefficients $m^{(n)}_{(k),u}$ is $1$.  When $k<n-1$, Lemma \ref{no $s_n$} shows that there is a permutation $u$ with $m^{(n)}_{(k),u}=1$.  When $k=n-1$, Lemmas \ref{lemma:ss n-1} and \ref{lemma:nilp n-1} show that the regular semisimple Hessenberg variety consists of $n$ copies of $\Fl(n-1)$ and that the regular nilpotent Hessenberg variety consists of one copy of $\Omega_u \isom \Fl(n-1)$.  Since the regular nilpotent Hessenberg scheme $\HHess(X,h)$ is a flat limit of the regular semisimple Hessenberg variety whose reduced variety is $\Omega_u$, the claim holds.
\end{proof}

\appendix
%

\section{Connectivity} \label{sec:connected}

This section proves that regular semisimple Hessenberg varieties are connected if and only if $h(i) \geq i+1$ for all $i \leq n-1$.  In fact, the proof holds for general Lie types.  This is a small corollary of \cite[Corollary 9.(i)]{mps} that we present for completeness.

We use the notation of \cite{mps}, where $G$ is a semisimple algebraic group over $\C$ with fixed Borel subgroup $B$, maximal torus $T \subseteq B$, and Weyl group $W$.  The Lie algebras of $G$, $B$, and $T$ are denoted $\mathfrak{g}$, $\mathfrak{b}$, and $\mathfrak{t}$ respectively.  Let $\Phi^+$ denote the positive roots corresponding to these choices, let $\Phi^-$ be the negative roots, and let $\Delta$ be the simple roots.

A Hessenberg space $H$ is defined to be a (vector) subspace of $\mathfrak{g}$ such that $\mathfrak{b} \subseteq H$ and $H$ is a $\mathfrak{b}$-submodule of $\mathfrak{g}$ (with respect to the Lie bracket).  (When $G = SL_n(\C)$, this agrees with the space defined in Remark \ref{remark:conjugacy}.)
\cite[Lemma 1]{mps} demonstrates a bijection between Hessenberg spaces $H$ and subsets $M \subseteq \Phi^-$ that satisfy the closure condition that if $\alpha \in M$, $\alpha_j \in \Delta$, and $\alpha+\alpha_j \in \Phi^-$, then $\alpha+\alpha_j \in M$.

Given a Hessenberg space $H$ and an element $X \in \mathfrak{g}$, the Hessenberg variety $\Hess(X,H)$ is defined by
\[\Hess(X,H) = \{gB/B \in G/B: g^{-1}Xg \in H\}.\]

\begin{proposition}
If $X$ is a regular semisimple element of $\mathfrak{g}$ then $\Hess(X,H)$ is connected and irreducible if and only if $-\Delta \subseteq M$.
\end{proposition}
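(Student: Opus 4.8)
The plan is to work equivariantly, exploiting the torus action and the fact (recalled in the paper) that for regular semisimple $X$ the variety $\Hess(X,H)$ is smooth. First I would reduce to the case $X \in \mathfrak{t}$: since $\Hess(gXg^{-1},H) = g\cdot\Hess(X,H)$ for $g \in G$ and every regular semisimple element is conjugate to a regular element of $\mathfrak{t}$, connectivity and irreducibility are unchanged, and we may assume $X \in \mathfrak{t}$ is regular. The centralizer torus $T$ then acts on $\Hess(X,H)$, and because $\mathfrak{t}\subseteq\mathfrak{b}\subseteq H$ and $W$ normalizes $T$, every $T$-fixed point $wB/B$ of $G/B$ (for $w\in W$) lies in $\Hess(X,H)$; these are exactly its fixed points. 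Since $\Hess(X,H)$ is a smooth projective $T$-variety, it is connected if and only if it is irreducible, so it suffices to settle connectivity.

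Next I would identify the $T$-stable curves. Writing $H = \mathfrak{b}\oplus\bigoplus_{\nu\in M}\mathfrak{g}_\nu$, a computation with the section $gB/B\mapsto \mathrm{Ad}(g^{-1})X \bmod H$ (using that $\mathrm{ad}(X)$ is invertible off $\mathfrak{t}$ because $X$ is regular) shows that the tangent space to $\Hess(X,H)$ at the fixed point $w$ has $T$-weights $\{w\nu : \nu\in M\}$. Consequently the one-dimensional $T$-orbit closures in $\Hess(X,H)$ are $T$-stable copies of $\mathbb{P}^1$ joining $w$ to $ws_\nu$ for each $w\in W$ and each $\nu\in M$, where $s_\nu$ is the reflection in $\nu$. (This is the structural content we borrow from \cite[Corollary 9.(i)]{mps}.) I would then reduce the connectivity of $\Hess(X,H)$ to that of the graph $\Gamma$ on vertex set $W$ with these edges: the union of the curves over a connected subgraph is connected and contains every fixed point, and every $T$-stable closed subvariety of a projective variety contains a fixed point, so if $\Gamma$ is connected then $\Hess(X,H)$ has a single component; the converse follows the same way (or from \cite[Corollary 9.(i)]{mps}). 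The component of $w$ in $\Gamma$ is the coset $wW'$ with $W' = \langle s_\nu : \nu\in M\rangle$, so $\Gamma$ is connected if and only if $W' = W$.

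It remains to prove that $\langle s_\nu : \nu\in M\rangle = W$ if and only if $-\Delta\subseteq M$. The direction $(\Leftarrow)$ is immediate, since $-\Delta\subseteq M$ makes $W'$ contain all simple reflections. For $(\Rightarrow)$ I would argue contrapositively: suppose $-\alpha_i\notin M$ for some simple root $\alpha_i$. The defining closure property of $M$ says that $M$ is an up-set in the root poset on $\Phi^-$ (closed under adding simple roots while remaining in $\Phi^-$), and the maximal elements of $\Phi^-$ are precisely the negative simple roots $-\Delta$. Using the standard fact that two comparable roots are joined by a saturated chain in the root poset, one sees that if some $\nu\in M$ had nonzero $\alpha_i$-coefficient then $\nu\preceq-\alpha_i$, and the chain together with up-closure would force $-\alpha_i\in M$. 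Hence every $\nu\in M$ has zero $\alpha_i$-coefficient, so every $s_\nu$ lies in the proper parabolic subgroup $\langle s_j : j\neq i\rangle$, giving $W'\neq W$. This proves the equivalence and hence the proposition.

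The main obstacle is the structural input of the second paragraph: correctly identifying the tangent weights at the fixed points, deducing the resulting set of $T$-stable rational curves, and converting connectivity of $\Gamma$ into connectivity of the variety. Most of this is exactly what \cite[Corollary 9.(i)]{mps} supplies, so the genuinely new work is the purely combinatorial final step, whose only delicate point is the saturated-chain argument in the $(\Rightarrow)$ direction.
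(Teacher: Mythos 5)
Your route is genuinely different from the paper's, and most of its ingredients are sound: the reduction to regular $X\in\mathfrak{t}$, the identification of the fixed locus with $\{wB : w\in W\}$, the computation that the tangent space of $\Hess(X,H)$ at $wB$ has $T$-weights $\{w\nu : \nu\in M\}$, the fact that the component of $w$ in your graph $\Gamma$ is the coset $w\langle s_\nu : \nu\in M\rangle$, and the saturated-chain argument showing $\langle s_\nu:\nu\in M\rangle=W$ if and only if $-\Delta\subseteq M$ are all correct (the chain fact you invoke is true: comparable roots of the same sign are joined by a chain whose steps are simple roots). With the easy explicit check that the $T$-curve joining $wB$ to $ws_\nu B$ actually lies in $\Hess(X,H)$ for $\nu\in M$ (conjugating $X$ by $\exp(te_{w\nu})$ moves it inside $\mathfrak{t}+\mathfrak{g}_{w\nu}$), your argument that ``$\Gamma$ connected $\Rightarrow$ $\Hess(X,H)$ connected'' is complete, so the implication $-\Delta\subseteq M\Rightarrow$ connected is in good shape.

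The gap is the converse of the graph--variety equivalence. It does \emph{not} ``follow the same way'': your forward argument (a connected union of curves meets every component) gives no way to rule out that fixed points in different components of $\Gamma$ lie in the same component of $\Hess(X,H)$ --- a priori the variety could be connected through higher-dimensional $T$-orbits invisible to the one-skeleton. What you need is the genuine theorem (from GKM/Bia{\l}ynicki-Birula theory) that for a projective $T$-variety with finitely many fixed points and finitely many one-dimensional orbits, the one-skeleton has the same components as the variety; and this is \emph{not} what \cite[Corollary 9.(i)]{mps} says. That corollary states exactly the criterion ``connected iff $w(M)\not\subseteq\Phi^-$ for every $w\neq e$,'' with no mention of tangent weights or $T$-curves, so your attribution of the ``structural content'' to it is off. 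If you instead fall back on the actual statement of \cite[Corollary 9.(i)]{mps}, the missing step is: if $-\alpha_i\notin M$ then $s_i(M)\subseteq\Phi^-$ (because $s_i$ permutes $\Phi^-\setminus\{-\alpha_i\}$), hence the variety is disconnected --- but this one-line observation, which you never make, is precisely the paper's proof of this direction, and it renders $\Gamma$ and the saturated-chain argument unnecessary there. For comparison, the paper disposes of both directions with short root combinatorics on top of the MPS criterion: for the direction you handle via chains, it takes $w\neq e$, chooses a negative root $\alpha$ with $w(\alpha)\in\Phi^+$, expands $\alpha$ in simple roots, and concludes $w(-\alpha_j)\in\Phi^+$ for some $-\alpha_j\in M$. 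So your proposal is salvageable --- either cite the one-skeleton theorem correctly or swap in the paper's argument for the backward implication --- but as written that implication rests on an unjustified step.
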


\begin{proof}
This follows from \cite[Corollary 9.(i)]{mps}, which states that for regular semisimple $X$, the Hessenberg variety $\Hess(X,H)$ is connected if and only if for each $w \in W$ except $w = e$, the set $w(M) \not \subseteq \Phi^-$.  (Irreducibility comes for free, since regular semisimple Hessenberg varieties are always nonsingular.)

Indeed, suppose $-\Delta \not \subseteq M$ and let $\alpha_j$ be a simple root with $-\alpha_j \not \in M$.  Then the simple reflection $s_j$ corresponding to $\alpha_j$ satisfies the properties that $s_j(\alpha_j) \in \Phi^-$ and that for every other positive root $\alpha$, the image $s_j(\alpha) \in \Phi^+$.  Hence $s_j(M) \subseteq \Phi^-$ and so $\Hess(X,H)$ is not connected.

Conversely, suppose $-\Delta \subseteq M$.  If $w \neq e$ is an element of $W$ then $w(\Phi^-) \not \subseteq \Phi^-$.  Let $\alpha$ be a negative root such that $w(\alpha)$ is positive, and write $\alpha$ in terms of the simple roots as $\alpha = \sum - c_j \alpha_j$, where each $c_j$ is a nonnegative integer.  The reflection $w$ is linear in the roots so $w(\alpha) = \sum c_j w(-\alpha_j)$.  This quantity is positive, so we conclude that at least one of the $w(-\alpha_j)$ is positive.  It follows that $w(M) \not \subseteq \Phi^-$ and so $\Hess(X,H)$ is connected.
\end{proof}


\end{document}